\newcommand{\ignore}[1]{}
\definecolor{Fcolor}{rgb}{0,0.5,0.25}
\newcommand{\sm}[2]{\scaleto{#1\mathstrut}{#2pt}}
\begin{document}

\title{Penalized Semidefinite Programming for Quadratically-Constrained Quadratic Optimization
\thanks{This work is in part supported by the NSF Award 1809454. Javad Lavaei is supported by an AFOSR YIP Award and ONR N000141712933.  Alper Atamt\"{u}rk is supported, in part, by grant FA9550-10-1-0168 from the Office of the Assistant Secretary of Defense for Research \& Engineering, NSF award 1807260, DOE ARPA-E award 260801540061, and ONR award 12951270.
}
}


\author{Ramtin Madani \and 
	Mohsen Kheirandishfard \and 
	Javad Lavaei \and 
        Alper Atamt\"{u}rk 
}


\institute{R. Madani \and M. Kheirandishfard\at
517 Nedderman Hall, 
University of Texas, 
Arlington, TX 76019, USA\\
\email{ramtin.madani@uta.edu,mohsen.kheirandishfard@mavs.uta.edu}
\and
J. Lavaei and A. Atamt\"{u}rk \at
4175 Etcheverry Hall, 
University of California,
Berkeley, CA 94720, USA\\
Tel.: +1-510-642-4559\\
\email{lavaei@berkeley.edu,atamturk@berkeley.edu}
}

\date{Received: date / Accepted: date}

\maketitle

\begin{abstract}
In this paper, we give a new penalized semidefinite programming approach for non-convex quadratically-constrained quadratic programs (QCQPs). We incorporate penalty terms into the objective of convex relaxations in order to retrieve feasible and near-optimal solutions for non-convex QCQPs. We introduce a generalized linear independence constraint qualification (GLICQ) criterion and prove that any GLICQ regular point that is sufficiently close to the feasible set can be used to construct an appropriate penalty term and recover a feasible solution. 
{\color{black}Inspired by these results, we develop a heuristic}
sequential procedure that preserves feasibility and aims to improve the objective value at each iteration. Numerical experiments on large-scale system identification problems as well as benchmark instances from the library of quadratic programming (QPLIB) demonstrate the ability of the proposed penalized semidefinite programs in finding near-optimal solutions for non-convex QCQP.
\keywords{Semidefinite programming \and non-convex optimization  \and non-linear programming \and convex relaxation}
\PACS{87.55.de}
\subclass{65K05 \and 90-08 \and 90C26 \and 90C22}
\end{abstract}

\section{Introduction}

{\color{black}
This paper studies a subclass of polynomial optimization, 
referred to as quadratically-constrained quadratic programming (QCQP), which
minimizes a quadratic function within a feasible set that is also characterized by quadratic functions. 
QCQP arises in various scientific and engineering applications, such as} electric power systems 
\cite{madani2016promises,madani2015convex,madani2015convexification,madani2014low}, 
imaging science 
\cite{bandeira2014tightness,candes2015phase,fogel2013phase,singer2011angular}, 
signal processing \cite{aittomaki2009beampattern,aubry2013ambiguity,chen2009mimo,li2012approximation,luo2010semidefinite,mariere2003blind}, automatic control
\cite{fazelnia2014convex,toker1998complexity,madani2014low,ahmadi2017dsos}, 
quantum mechanics 
\cite{hilling2010geometric,deza1994applications,laurent2015conic,burgdorf2015closure},
and cybersecurity
\cite{cid2005small,cid2004computational,courtois2002cryptanalysis,murphy2002essential}.
The development of efficient optimization techniques and numerical algorithms for QCQP has been an active area of research for decades. Due to the barriers imposed by NP-hardness, the focus of some research efforts has shifted from designing general-purpose algorithms to specialized methods that are robust and scalable for specific application domains. Notable examples for which methods with guaranteed performance have been offered in the literature include the problems of multisensor beamforming in communication theory \cite{gershman2010convex}, phase retrieval in signal processing \cite{candes2013phaselift}, and matrix completion in machine learning \cite{mu2016scalable,candes2009exact}.

{\color{black}
This paper advances a popular framework for the global analysis of QCQP through semidefinite programming (SDP) relaxations \cite{sherali1990hierarchy,nesterov1994interior,lasserre2001global,lasserre2006convergent,josz2015moment,chen2012globally,papp2013semidefinite,mohammad2017rounding}. A relaxation is said to be \textit{exact} if it has the same optimal objective value as the original problem. SDP has been critically important for constructing strong convex relaxations of non-convex optimization problems and its exactness has been verified for numerous real-world problems \cite{lasserre2001explicit,kim2003exact,sojoudi2013exactness1,sojoudi2013exactness2,burer2018exact}. 
Additionally, for many problems where an exact relaxation is not available, SDP relaxations have offered effective approximation algorithms \cite{Nesterov98,Ye991,Ye992,Zhang00,Zhang06,Luo07,He08,He10}. 
Geomans and Williamson \cite{goemans1995improved} show that an SDP relaxation objective is within 14\% of the optimal value for the MAXCUT problem on graphs with non-negative weights.
Additionally, SDP relaxations are used within branch-and-bound algorithms \cite{chen2017spatial,burer2008finite} for finding globally optimal solutions to non-convex optimization problems. 

In particular, forming hierarchies of SDP relaxations \cite{LS:SDP,sherali1990hierarchy,lasserre2001global,lasserre2006convergent,josz2015moment,chen2012globally,papp2013semidefinite,mohammad2017rounding} has been proven to yield the convex hull of non-convex QCQP problems. Despite solid theoretical guarantees, one of the primary challenges for the application of SDP hierarchies beyond small-scale instances is the rapid growth of dimensionality. In response, one direction of research has exploited sparsity and structural patterns to boost efficiency 
\cite{aa:rank-one,aa:2by2,muramatsu2003new,kim2003second,kim2003exact,bao2011semidefinite,natarajan2013penalized}. 
Another direction, pursued in \cite{alizadeh2003second,AN:cmir,majumdar2014control,permenter2014partial,madani2017scalable,ahmadi2017dsos,bienstock2018lp}, is to use lower-complexity relaxations as alternatives to computationally demanding SDP relaxations. In this paper, we offer an alternative approach, which focuses on penalizing the objective function of the SDP relaxations as opposed to strengthening the quality of the relaxations, which can be computationally prohibitive. We show that under certain conditions, incorporating a penalty term in the objective can remedy inexact relaxations and lead to feasible points for non-convex QCQPs.}

\subsection{Contributions}
This paper is concerned with non-convex quadratically-constrained quadratic programs for which {SDP relaxations} are inexact. In order to recover feasible points for QCQP, we incorporate a linear penalty term into the objective of {SDP relaxations} and show that feasible and near-globally optimal points can be obtained for the original QCQP by solving the resulting penalized {SDPs}. The penalty term is based on an arbitrary initial point. Our first result states that if the initial point is feasible and satisfies the linear independence constraint qualification (LICQ) condition, then penalized {SDP} produces a unique solution that is feasible for the original QCQP and its objective value is not worse than that of the initial point. 
Our second result states that if the initial point is infeasible, but instead is sufficiently close to the feasible set and satisfies a generalized LICQ condition, then the unique optimal solution to penalized {SDP} is feasible for QCQP. Lastly, motivated by these results on constructing feasible points, we propose a {heuristic} sequential procedure for non-convex QCQP and demonstrate its performance on benchmark instances from the QPLIB library \cite{FuriniEtAl2017TR} as well as on large-scale system identification problems.

The success of sequential frameworks and penalized {SDP} in solving bilinear matrix inequalities (BMIs) is demonstrated in \cite{ibaraki2001rank,BMI1,BMI2}. In \cite{ashraphijuo2016characterization}, it is shown that penalized {SDP} is able to find the roots of overdetermined systems of polynomial equations. Moreover, the incorporation of penalty terms into the objective of SDP relaxations are proven to be effective for solving non-convex optimization problems in power systems \cite{madani2015convex,madani2016promises,OPF_CDC,UC_CDC}. These papers show that penalizing certain physical quantities in power network optimization problems such as reactive power loss or thermal loss facilitates the recovery of feasible points from convex relaxations. 
In \cite{ibaraki2001rank}, a sequential framework is introduced for solving BMIs without theoretical guarantees. Papers \cite{BMI1,BMI2} investigate this approach further and offer theoretical results through the notion of generalized Mangasarian-Fromovitz regularity condition. However, these conditions are not valid in the presence of equality constraints and for general QCQPs. Motivated by the success of penalized {SDP}, 
this paper offers a theoretical framework for general QCQP and, by extension, polynomial optimization problems.


\subsection{Notations}
Throughout the paper, scalars, vectors, and matrices are respectively shown by italic letters, lower-case italic bold letters, and upper-case italic bold letters. The symbols $\mathbb{R}$, $\mathbb{R}^{n}$, and $\mathbb{R}^{n\times m}$ denote the sets of real scalars, real vectors of size $n$, and real matrices of size $n\times m$, respectively. The set of $n\times n$ real symmetric matrices is shown by $\mathbb{S}_n$. For a given vector $\boldsymbol{a}$ and a matrix $\boldsymbol{A}$, the symbols $a_i$ and $A_{ij}$ respectively indicate the $i^{th}$ element of $\boldsymbol{a}$ and the $(i,j)^{\mathrm{th}}$ element of $\boldsymbol{A}$.
The symbols $\langle\cdot\,,\cdot\rangle$ and $\|\cdot\|_{\mathrm{F}}$ denote the Frobenius inner product and norm of matrices, respectively. The notation $|\cdot|$ represents either the absolute value operator or cardinality of a set, depending on the context. The notation $\|\cdot\|_2$ denotes the $\ell_2$ norm of vectors, matrices, and matrix pencils. The $n\times n$ identity matrix is denoted by $\boldsymbol{I}_n$. The origin of $\mathbb{R}^n$ is denoted by $\boldsymbol{0}_n$. The superscript $(\cdot)^\top$ and the symbol $\mathrm{tr}\{\cdot\}$ represent the transpose and trace operators, respectively. 
Given a matrix $\boldsymbol{A}\in\mathbb{R}^{m\times n}$,
the notation $\sigma_{\min}(\boldsymbol{A})$ represents the minimum singular value of $\boldsymbol{A}$.
The notation $\boldsymbol{A}\succeq 0$ means that $\boldsymbol{A}$ is symmetric positive-semidefinite. For a pair of $n\times n$ symmetric matrices $(\boldsymbol{A},\boldsymbol{B})$ and proper cone $\mathcal{C}\subseteq\mathbb{S}_n$, the notation $\boldsymbol{A}\succeq_{\mathcal{C}}\boldsymbol{B}$ means that $\boldsymbol{A}-\boldsymbol{B}\in\mathcal{C}$, whereas $\boldsymbol{A}\succ_{\mathcal{C}}\boldsymbol{B}$ means that $\boldsymbol{A}-\boldsymbol{B}$ belongs to the interior of $\mathcal{C}$.
Given an integer $r>1$, define $\mathcal{C}_r$ as the cone of $n\times n$ symmetric matrices whose $r\times r$ principal submatrices are all positive semidefinite. Similarly, define $\mathcal{C}^{\ast}_r$ as the dual cone of $\mathcal{C}_r$, i.e., the cone of $n\times n$ symmetric matrices { whose every $r\times r$ principal submatrix is positive semidefinite (i.e., factor-width bounded by $r$)}.
Given a matrix $\boldsymbol{A}\in\mathbb{R}^{m\times n}$ and two sets of positive integers $\mathcal{S}_1$ and $\mathcal{S}_2$, define $\boldsymbol{A}\{\mathcal{S}_1, \mathcal{S}_2\}$ as the submatrix of $\boldsymbol{A}$ obtained by removing all rows of $\boldsymbol{A}$ whose indices do not belong to $\mathcal{S}_1$, and all columns of $\boldsymbol{A}$  whose indices do not belong to $\mathcal{S}_2$. Moreover, define $\boldsymbol{A}\{\mathcal{S}_1\}$ as the submatrix of $\boldsymbol{A}$ obtained by removing all rows of $\boldsymbol{A}$ that do not belong to $\mathcal{S}_1$.
Given a vector $\boldsymbol{a}\in\mathbb{R}^{n}$ and a set $\mathcal{F}\subseteq\mathbb{R}^{n}$, define $d_{\mathcal{F}}(\boldsymbol{a})$ as the minimum distance between $\boldsymbol{a}$ and members of $\mathcal{F}$. Given a pair of integers $(n,r)$, the binomial coefficient ``$n$ choose $r$'' is denoted by $n\choose r$. The notations $\nabla_{\boldsymbol{x}}f(\boldsymbol{a})$ and $\nabla^2_{\boldsymbol{x}}f(\boldsymbol{a})$, respectively, represent the gradient and Hessian of the function $f$, with respect to the vector $\boldsymbol{x}$, at a point $\boldsymbol{a}$. 

\subsection{Outline}
The remainder of the paper is organized as follows. In Section~\ref{sec:prelim}, we review the basic
lifted and reformulation linearization technique (RLT) as well as the standard {SDP} relaxations. Section~\ref{sec:penalty} presents the main results of the paper: the penalized {SDP}, its theoretical analysis on producing a feasible solution along with a generalized linear independence constraint qualification, and finally the sequential penalization procedure. 
In Section~\ref{sec:exp} we present numerical experiments to test the effectiveness of the sequential penalization approach for non-convex QCQPs from the library of quadratic programming instances (QPLIB)  as well as  large-scale system identification problems. Finally, we conclude in section~\ref{sec:conc} with a few final remarks.

\section{Preliminaries}
\label{sec:prelim}

In this section, we review the lifting and reformulation-linearization technique (RLT) as well as the standard convex relaxations of QCQP that are necessary for the development of the main results on penalized {SDP} in Section~\ref{sec:penalty}.
Consider a general quadratically-constrained quadratic program (QCQP):
\begin{subequations}\begin{align}
	\underset{\boldsymbol{x}\in\mathbb{R}^n}{\text{minimize}} \ \ 
	&  q_0(\boldsymbol{x})   \label{prob_obj}\\
	\text{s.t.} \ \ \ \ 
	&  q_k(\boldsymbol{x})\leq 0, \ \  k\in\mathcal{I}\label{prob_ineq}\\	 
	& q_k(\boldsymbol{x})=0, \ \  k\in\mathcal{E},\label{prob_eq} 
	\end{align}\end{subequations}	
where $\mathcal{I}$ and $\mathcal{E}$ index the sets of inequality and equality constraints, respectively. For every $k\in\{0\}\cup\mathcal{I}\cup\mathcal{E}$, $q_k:\mathbb{R}^n\to\mathbb{R}$ is a quadratic function of the form $q_k(\boldsymbol{x})\triangleq\boldsymbol{x}^{\top}\boldsymbol{A}_k\boldsymbol{x}+2\boldsymbol{b}^{\top}_k\boldsymbol{x}+c_k$, where $\boldsymbol{A}_k\in\mathbb{S}_n$, $\boldsymbol{b}_k\in\mathbb{R}^n$, and $c_k\in\mathbb{R}$. Denote $\mathcal{F}$ as the feasible set of the QCQP \eqref{prob_obj}--\eqref{prob_eq}.
To derive the optimality conditions for a given point, it is useful to define the Jacobian matrix of the constraint functions.
\begin{definition}[Jacobian Matrix]\label{Jacobian}
	For every $\hat{\boldsymbol{x}}\in\mathbb{R}^n$, the Jacobian matrix $\mathcal{J}(\hat{\boldsymbol{x}})$ 
	for the constraint functions $\{q_k\}_{k\in\mathcal{I}\cup\mathcal{E}}$ is
		\begin{align}
		\mathcal{J}(\hat{\boldsymbol{x}})&\triangleq
		[\nabla_{\boldsymbol{x}} q_1(\hat{\boldsymbol{x}}),\ldots,\nabla_{\boldsymbol{x}} q_{|\mathcal{I}\cup\mathcal{E}|}(\hat{\boldsymbol{x}})]^\top.
		\end{align}
	For every $\mathcal{Q}\subseteq\mathcal{I}\cup\mathcal{E}$, define $\mathcal{J}_{\mathcal{Q}}(\hat{\boldsymbol{x}})$ as the submatrix of $\mathcal{J}(\hat{\boldsymbol{x}})$ resulting from the rows that belong to $\mathcal{Q}$.
\end{definition}

Given a feasible point for the QCQP \eqref{prob_obj}--\eqref{prob_eq}, the well-known linear independence constraint qualification (LICQ) condition can be used as a regularity criterion.

\begin{definition}[LICQ Condition]\label{LICQ}
	A feasible point $\hat{\boldsymbol{x}}\in\mathcal{F}$ is LICQ regular if the rows of
	$\mathcal{J}_{\hat{\mathcal{B}}}(\hat{\boldsymbol{x}})$ are linearly independent, where $\hat{\mathcal{B}}\triangleq\{k\in\mathcal{I}\cup\mathcal{E}\;|\;q_k(\hat{\boldsymbol{x}})=0\}$ denotes the set of binding constraints at $\hat{\boldsymbol{x}}$.
\end{definition}

Finding a feasible point for the QCQP \eqref{prob_obj}--\eqref{prob_eq}, however, is NP-hard as the Boolean Satisfiability Problem (SAT) is a special case. Therefore, in Section~\ref{sec:penalty}, we introduce the notion of generalized LICQ as a regularity condition for both feasible and infeasible points.

\subsection{Convex relaxation}
{ A common approach for tackling the non-convex QCQP \eqref{prob_obj}--\eqref{prob_eq} introduces an auxiliary variable $\boldsymbol{X}\in\mathbb{S}_n$ accounting for $\boldsymbol{x}\boldsymbol{x}^{\top}$. Then, the objective function \eqref{prob_obj} and constraints \eqref{prob_ineq}--\eqref{prob_eq} can be written as linear functions of $\boldsymbol{x}$ and $\boldsymbol{X}$. For every $k\in\{0\}\cup\mathcal{I}\cup\mathcal{E}$, define $\bar{q}_k:\mathbb{R}^n\times\mathbb{S}_n\to\mathbb{R}$ as
\begin{align}
\bar{q}_k(\boldsymbol{x},\boldsymbol{X})\triangleq\langle\boldsymbol{A}_k,\boldsymbol{X}\rangle+2\boldsymbol{b}^{\top}_k\boldsymbol{x}+c_k.
\end{align}
Consider the following relaxation of QCQP \eqref{prob_obj}--\eqref{prob_eq}:
\begin{subequations}\begin{align}
	\underset{\begin{subarray}{l} \boldsymbol{x}\in\mathbb{R}^n, 
		\boldsymbol{X}\in\mathbb{S}_n\end{subarray}}{\text{minimize}} \ \ &
	\bar{q}_0(\boldsymbol{x},\boldsymbol{X})  \label{prob_lifted_obj} \\
	\ \ \ \ \ \ \text{s.t.} \ \ \ \ 
	&  \bar{q}_k(\boldsymbol{x},\boldsymbol{X}) \leq 0, \qquad\qquad k\in\mathcal{I}\label{prob_lifted_ineq} \\ 
	&  \bar{q}_k(\boldsymbol{x},\boldsymbol{X}) = 0,  \qquad\qquad k\in\mathcal{E}\label{prob_lifted_eq} \\
	&  \boldsymbol{X}-\boldsymbol{x}\boldsymbol{x}^\top\succeq_{\mathcal{C}_r}0  \label{prob_lifted_conic}
	\end{align}\end{subequations}
where 
the additional conic constraint \eqref{prob_lifted_conic} is a convex relaxation of the equation $\boldsymbol{X}=\boldsymbol{x}\boldsymbol{x}^\top$.  
We refer to the convex problem \eqref{prob_lifted_obj}--\eqref{prob_lifted_conic} as the $r\times r$ {SDP} relaxation of the QCQP \eqref{prob_obj}--\eqref{prob_eq}.} 
The choice $r=n$ { yields the well-known semidefinite programming (SDP) relaxation.
Additionally, in the homogeneous case (i.e., if 
$\boldsymbol{b}_0=\boldsymbol{b}_1=\dots=\boldsymbol{b}_{|\mathcal{I}\cup\mathcal{E}|}=\boldsymbol{0}$),
the case $r=2$ leads to the second-order conic programming (SOCP) relaxation.

In the presence of affine constraints, the RLT method of Sherali and Adams \cite{sherali2013reformulation} can be used to produce additional inequalities with respect to $\boldsymbol{x}$ and $\boldsymbol{X}$ to strengthen convex relaxations. This is covered in Appendix \ref{apn:RLT}.
} 

If the relaxed problem \eqref{prob_lifted_obj}--\eqref{prob_lifted_conic} has an optimal solution $(\accentset{\ast}{\boldsymbol{x}},\accentset{\ast}{\boldsymbol{X}})$ that satisfies $\accentset{\ast}{\boldsymbol{X}}=\accentset{\ast}{\boldsymbol{x}}\accentset{\ast}{\boldsymbol{x}}^{\top}$, then the relaxation is said to be \textit{exact} and $\accentset{\ast}{\boldsymbol{x}}$ is a globally optimal solution for the QCQP \eqref{prob_obj}--\eqref{prob_eq}. The next section offers a penalization method for addressing the case where relaxations are not exact.

\section{Penalized {SDP}}
\label{sec:penalty}

If the relaxed problem \eqref{prob_lifted_obj}--\eqref{prob_lifted_conic} is not exact, the resulting solution is not necessarily feasible for the original QCQP \eqref{prob_obj}--\eqref{prob_eq}. In this case, we use an initial point $\hat{\boldsymbol{x}}\in\mathbb{R}^n$ (either feasible or infeasible) to revise the objective function, resulting in a \textit{penalized {SDP}} of the form: {
\begin{subequations}\begin{align}
	\underset{\begin{subarray}{l} \boldsymbol{x}\in\mathbb{R}^n, 
		\boldsymbol{X}\in\mathbb{S}_n\end{subarray}}{\text{minimize}} \ \ 
	& \bar{q}_0(\boldsymbol{x},\boldsymbol{X}) + \eta\times(\mathrm{tr}\{\boldsymbol{X}\} - 2\hat{\boldsymbol{x}}^{\!\top}\boldsymbol{x} + \hat{\boldsymbol{x}}^{\!\top}\hat{\boldsymbol{x}} )   \label{prob_lifted_pen_obj} \\
	\ \ \ \ \ \ \text{s.t.} \ \ \ 
	&  \bar{q}_k(\boldsymbol{x},\boldsymbol{X}) \leq 0, \qquad \qquad k\in\mathcal{I}\label{prob_lifted_pen_ineq} \\
	& 
	\bar{q}_k(\boldsymbol{x},\boldsymbol{X}) = 0,  \qquad \qquad k\in\mathcal{E}\label{prob_lifted_pen_eq} \\	 
	& \boldsymbol{X}-\boldsymbol{x}\boldsymbol{x}^\top\succeq_{\mathcal{C}_r}0  \label{prob_lifted_pen_conic}
	\end{align}\end{subequations}}
where $\eta>0$ is a fixed penalty parameter. 
Note that the penalty term $\mathrm{tr}\{\boldsymbol{X}\} - 2\hat{\boldsymbol{x}}^{\!\top}\boldsymbol{x} + \hat{\boldsymbol{x}}^{\!\top}\hat{\boldsymbol{x}}$ equals zero for $\boldsymbol{X} =  \hat{\boldsymbol{x}}\hat{\boldsymbol{x}}^{\!\top}$.
The penalization is said to be {\it tight} if problem \eqref{prob_lifted_pen_obj}--\eqref{prob_lifted_pen_conic} has a unique optimal solution $(\accentset{\ast}{\boldsymbol{x}},\accentset{\ast}{\boldsymbol{X}})$ that satisfies $\accentset{\ast}{\boldsymbol{X}}=\accentset{\ast}{\boldsymbol{x}}\accentset{\ast}{\boldsymbol{x}}^{\top}$. In the next section, we give sufficient conditions under which penalized SDP is tight. 

\subsection{Theoretical analysis}

The following theorem guarantees that if $\hat{\boldsymbol{x}}$ is feasible and satisfies the LICQ regularity condition (in Section~\ref{sec:prelim}), then the solution of \eqref{prob_lifted_pen_obj}--\eqref{prob_lifted_pen_conic} is guaranteed to be feasible for the QCQP \eqref{prob_obj}--\eqref{prob_eq} for an appropriate choice of $\eta$. 

\begin{theorem}\label{thm1}
	Let $\hat{\boldsymbol{x}}$ be a feasible point for the QCQP \eqref{prob_obj}--\eqref{prob_ineq} that satisfies the LICQ condition. For sufficiently large $\eta>0$, the {SDP} \eqref{prob_lifted_pen_obj}--\eqref{prob_lifted_pen_conic} has a unique optimal solution $(\accentset{\ast}{\boldsymbol{x}},\accentset{\ast}{\boldsymbol{X}})$ such that $\accentset{\ast}{\boldsymbol{X}}=\accentset{\ast}{\boldsymbol{x}}\accentset{\ast}{\boldsymbol{x}}^{\top}$. Moreover, $\accentset{\ast}{\boldsymbol{x}}$ is feasible for \eqref{prob_obj}--\eqref{prob_eq} and satisfies $q_0(\accentset{\ast}{\boldsymbol{x}})\leq q_0(\hat{\boldsymbol{x}})$.
\end{theorem}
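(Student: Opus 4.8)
The plan is to exhibit an explicit primal--dual pair satisfying the KKT conditions of the penalized {SDP}, and then use convexity of the relaxation to upgrade it into rank-one exactness and uniqueness. The starting point is the identity $\mathrm{tr}\{\boldsymbol{X}\}-2\hat{\boldsymbol{x}}^{\!\top}\boldsymbol{x}+\hat{\boldsymbol{x}}^{\!\top}\hat{\boldsymbol{x}}=\mathrm{tr}\{\boldsymbol{X}-\boldsymbol{x}\boldsymbol{x}^{\top}\}+\|\boldsymbol{x}-\hat{\boldsymbol{x}}\|_2^2$, which expresses the penalty as a sum of two nonnegative terms (the first is nonnegative because $\boldsymbol{X}-\boldsymbol{x}\boldsymbol{x}^{\top}\succeq_{\mathcal{C}_r}0$ forces a nonnegative diagonal) that vanishes exactly when $\boldsymbol{X}=\boldsymbol{x}\boldsymbol{x}^{\top}$ and $\boldsymbol{x}=\hat{\boldsymbol{x}}$. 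Setting $\boldsymbol{G}\triangleq\boldsymbol{X}-\boldsymbol{x}\boldsymbol{x}^{\top}$, the penalized objective rewrites as $q_0(\boldsymbol{x})+\langle\boldsymbol{A}_0+\eta\boldsymbol{I}_n,\boldsymbol{G}\rangle+\eta\|\boldsymbol{x}-\hat{\boldsymbol{x}}\|_2^2$. Evaluating at the feasible lift $(\hat{\boldsymbol{x}},\hat{\boldsymbol{x}}\hat{\boldsymbol{x}}^{\top})$ shows the optimal value is at most $q_0(\hat{\boldsymbol{x}})$, a fact I reuse both to locate the solution near $\hat{\boldsymbol{x}}$ and to obtain the claimed inequality $q_0(\accentset{\ast}{\boldsymbol{x}})\le q_0(\hat{\boldsymbol{x}})$.

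Next I would construct the candidate. Minimizing the penalized QCQP $q_0(\boldsymbol{x})+\eta\|\boldsymbol{x}-\hat{\boldsymbol{x}}\|_2^2$ over $\mathcal{F}\cap\{\|\boldsymbol{x}-\hat{\boldsymbol{x}}\|_2\le\rho\}$ yields a minimizer $\boldsymbol{x}^{\eta}$ with $q_0(\boldsymbol{x}^{\eta})+\eta\|\boldsymbol{x}^{\eta}-\hat{\boldsymbol{x}}\|_2^2\le q_0(\hat{\boldsymbol{x}})$; local Lipschitzness of $q_0$ then gives $\|\boldsymbol{x}^{\eta}-\hat{\boldsymbol{x}}\|_2=O(1/\eta)$, so for large $\eta$ the point $\boldsymbol{x}^{\eta}$ lies strictly inside the ball and is a genuine local minimizer over $\mathcal{F}$. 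Because LICQ holds at $\hat{\boldsymbol{x}}$ and the full-rank condition on $\mathcal{J}_{\hat{\mathcal{B}}}$ is open, it persists at the nearby $\boldsymbol{x}^{\eta}$, furnishing KKT multipliers $\lambda^{\eta}_k$ (with $\lambda^{\eta}_k\ge 0$ for $k\in\mathcal{I}$ and $\lambda^{\eta}_k=0$ whenever $q_k(\boldsymbol{x}^{\eta})<0$). Since the forcing term $2\eta(\boldsymbol{x}^{\eta}-\hat{\boldsymbol{x}})$ stays bounded by the $O(1/\eta)$ estimate, the full-row-rank Jacobian forces $\lambda^{\eta}$ to remain bounded as $\eta\to\infty$. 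I then set $\boldsymbol{\Omega}\triangleq\boldsymbol{A}_0+\eta\boldsymbol{I}_n+\sum_{k}\lambda^{\eta}_k\boldsymbol{A}_k$; as the correction is bounded while $\eta\boldsymbol{I}_n$ dominates, $\boldsymbol{\Omega}$ is strictly diagonally dominant with positive diagonal for large $\eta$, hence lies in the interior of the factor-width cone $\mathcal{C}_r^{\ast}$ (in particular $\boldsymbol{\Omega}\succeq 0$).

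The heart of the argument is verifying that $(\boldsymbol{x}^{\eta},\boldsymbol{x}^{\eta}{\boldsymbol{x}^{\eta}}^{\top})$ together with $(\lambda^{\eta},\boldsymbol{\Omega})$ satisfies KKT for the penalized {SDP}. Stationarity of the Lagrangian $\bar{q}_0+\eta(\mathrm{tr}\{\boldsymbol{X}\}-2\hat{\boldsymbol{x}}^{\!\top}\boldsymbol{x}+\hat{\boldsymbol{x}}^{\!\top}\hat{\boldsymbol{x}})+\sum_k\lambda_k\bar{q}_k-\langle\boldsymbol{\Omega},\boldsymbol{X}-\boldsymbol{x}\boldsymbol{x}^{\top}\rangle$ in $\boldsymbol{X}$ forces precisely $\boldsymbol{\Omega}=\boldsymbol{A}_0+\eta\boldsymbol{I}_n+\sum_k\lambda^{\eta}_k\boldsymbol{A}_k$, while stationarity in $\boldsymbol{x}$ reproduces the gradient equation of the penalized QCQP, so both hold by construction; primal feasibility, dual feasibility ($\lambda^{\eta}_k\ge 0$, $\boldsymbol{\Omega}\in\mathcal{C}_r^{\ast}$), and complementary slackness (the inequality part inherited from the QCQP, and $\langle\boldsymbol{\Omega},\boldsymbol{G}\rangle=0$ since $\boldsymbol{G}=\boldsymbol{0}$ at a rank-one point) are then immediate. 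Because $\boldsymbol{\Omega}\succeq 0$ renders the Lagrangian convex in $(\boldsymbol{x},\boldsymbol{X})$, KKT is sufficient for global optimality, so this point solves the penalized {SDP}, is rank-one, and satisfies $q_0(\boldsymbol{x}^{\eta})\le q_0(\hat{\boldsymbol{x}})$.

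Finally, rank-one exactness of \emph{every} optimizer and uniqueness follow from the interiority $\boldsymbol{\Omega}\in\mathrm{int}(\mathcal{C}_r^{\ast})$. For any optimal $(\accentset{\ast}{\boldsymbol{x}},\accentset{\ast}{\boldsymbol{X}})$, weak duality against the fixed dual optimum forces $\langle\boldsymbol{\Omega},\accentset{\ast}{\boldsymbol{X}}-\accentset{\ast}{\boldsymbol{x}}\accentset{\ast}{\boldsymbol{x}}^{\top}\rangle=0$; since $\boldsymbol{\Omega}$ is interior to the dual cone and $\accentset{\ast}{\boldsymbol{X}}-\accentset{\ast}{\boldsymbol{x}}\accentset{\ast}{\boldsymbol{x}}^{\top}\in\mathcal{C}_r$, this pairing is strictly positive unless the slack vanishes, whence $\accentset{\ast}{\boldsymbol{X}}=\accentset{\ast}{\boldsymbol{x}}\accentset{\ast}{\boldsymbol{x}}^{\top}$. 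As the optimal set of a convex program is convex and the rank-one surface $\{(\boldsymbol{x},\boldsymbol{x}\boldsymbol{x}^{\top})\}$ contains no line segment, an optimal set lying entirely on it is a single point. I expect the main obstacle to be the second and third steps: producing the KKT point within $O(1/\eta)$ of $\hat{\boldsymbol{x}}$ with \emph{uniformly bounded} multipliers, and certifying that the resulting $\boldsymbol{\Omega}$ enters the interior of $\mathcal{C}_r^{\ast}$ -- this is exactly where LICQ regularity and the ``sufficiently large $\eta$'' hypothesis become indispensable.
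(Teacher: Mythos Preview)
Your proposal is correct and follows essentially the same strategy as the paper: pass to the penalized nonconvex problem $q_0(\boldsymbol{x})+\eta\|\boldsymbol{x}-\hat{\boldsymbol{x}}\|_2^2$ over $\mathcal{F}$, locate a minimizer near $\hat{\boldsymbol{x}}$, use LICQ (inherited by the nearby point since the active set can only shrink and the Jacobian varies continuously) to obtain KKT multipliers that stay bounded as $\eta\to\infty$, build the dual matrix $\boldsymbol{\Omega}=\boldsymbol{A}_0+\eta\boldsymbol{I}_n+\sum_k\lambda^{\eta}_k\boldsymbol{A}_k$ which lands in $\mathrm{int}(\mathcal{C}_r^{\ast})$ for large $\eta$, and then verify KKT for the convex relaxation to conclude rank-one optimality and uniqueness. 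The paper derives Theorem~\ref{thm1} as the special case $d_{\mathcal{F}}(\hat{\boldsymbol{x}})=0$ of Theorem~\ref{thm2}, routing through Lemmas~\ref{lemma1}--\ref{lemma6}; your direct argument bypasses the GLICQ and pencil-norm bookkeeping (which is only needed for the infeasible case) and replaces the explicit decomposition in Lemma~\ref{lemma6} with a diagonal-dominance observation, but the skeleton of the proof is the same.
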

\begin{proof}
The proof is given in Section \ref{sec:proofs}.
\end{proof}

If $\hat{\boldsymbol{x}}$ is not feasible, but satisfies a generalized LICQ regularity condition, introduced below, and is close enough to the feasible set $\mathcal{F}$, then the penalization is still tight for large enough $\eta> 0$. This result is described formally in Theorem~\ref{thm2}.
First, we define a distance measure from an arbitrary point in $\mathbb{R}^n$ to the feasible set of the problem.

\begin{definition}[{Distance Function}]\label{def_feas_dis}
	The {distance function} $d_{\mathcal{F}}:\mathbb{R}^n\to\mathbb{R}$ is defined as
	\begin{align}
	d_{\mathcal{F}}(\hat{\boldsymbol{x}})\triangleq\min\{\|\boldsymbol{x}-\hat{\boldsymbol{x}}\|_2\,|\,\boldsymbol{x}\in\mathcal{F}\} .\label{dist_formula}
	\end{align}
\end{definition}

\begin{definition}[Generalized LICQ Condition]\label{GLICQ}
	For every $\hat{\boldsymbol{x}}\in\mathbb{R}^n$, the set of {\it quasi-binding} constraints is defined as
	\begin{align}
	\!\!\!\hat{\mathcal{B}}&\triangleq\mathcal{E}\cup\bigg\{k\!\in\!\mathcal{I}\;\bigg|\;\!\!q_k(\hat{\boldsymbol{x}})\!+\!\|\nabla q_k(\hat{\boldsymbol{x}})\|_2d_{\mathcal{F}}(\hat{\boldsymbol{x}})\! + \!
	\frac{\|\nabla^2 q_k(\hat{\boldsymbol{x}})\|_2}{2}\;d_{\mathcal{F}}(\hat{\boldsymbol{x}})^2\geq 0\bigg\} \cdot\!\!
	\end{align}
	The point $\hat{\boldsymbol{x}}$ is said to satisfy the GLICQ condition if the rows of
	$\mathcal{J}_{\hat{\mathcal{B}}}(\hat{\boldsymbol{x}})$ are linearly independent.
	Moreover, the sensitivity function $s:\mathbb{R}^n\to\mathbb{R}$ is defined as
	\begin{align}
	\!\!\!s(\hat{\boldsymbol{x}}) \!\triangleq\!
	\left\{
	\begin{array}{ll}
	\sigma_{\min}(\mathcal{J}_{\hat{\mathcal{B}}}(\hat{\boldsymbol{x}}))  &\quad \text{if $\hat{\boldsymbol{x}}$ satisfies GLICQ} \\
	0 &\quad \text{otherwise,} 
	\end{array}
	\right.\!\!\!
	\end{align}
	where $\sigma_{\min}(\mathcal{J}_{\hat{\mathcal{B}}}(\hat{\boldsymbol{x}}))$ denotes the smallest singular value of $\mathcal{J}_{\hat{\mathcal{B}}}(\hat{\boldsymbol{x}})$.
\end{definition}
Observe that if $\hat{\boldsymbol{x}}$ is feasible, then $d_{\mathcal{F}}(\hat{\boldsymbol{x}})=0$, and GLICQ condition reduces to the LICQ condition. Moreover, GLICQ is satisfied if and only if $s(\hat{\boldsymbol{x}})>0$.

The next definition introduces the notion of matrix pencil corresponding to the QCQP \eqref{prob_obj}--\eqref{prob_eq}, which will be used as a sensitivity measure.
\begin{definition}[Pencil Norm]\label{def_pen}
	For the QCQP \eqref{prob_obj}-\eqref{prob_eq}, define the corresponding matrix pencil $\boldsymbol{P}:\mathbb{R}^{|\mathcal{I}|}\times\mathbb{R}^{|\mathcal{E}|}\to\mathbb{S}_n$ as follows:
	\begin{align}
	\boldsymbol{P}(\boldsymbol{\gamma},\boldsymbol{\mu})\triangleq
	\sum_{k\in\mathcal{I}}{\gamma_k\boldsymbol{A}_k}+
	\sum_{k\in\mathcal{E}}{\mu_k\boldsymbol{A}_k}.
	\end{align}
	Moreover, define the pencil norm $\|\boldsymbol{P}\|_2$ as
	\begin{align}
	\|\boldsymbol{P}\|_2\triangleq\max\big\{\|\boldsymbol{P}(\boldsymbol{\gamma},\boldsymbol{\mu})\|_2\,\big|\,
	\|\boldsymbol{\gamma}\|^2_2+\|\boldsymbol{\mu}\|^2_2=1\big\},
	\end{align}
	which is upperbounded by $\sqrt{\sum_{k\in\mathcal{I}\cup\mathcal{E}}\|\boldsymbol{A}_k\|^2_2}$~.
\end{definition}

\begin{theorem}\label{thm2}
	Let $\hat{\boldsymbol{x}}\in\mathbb{R}^n$ satisfy the GLICQ condition for the QCQP \eqref{prob_obj}-\eqref{prob_ineq}, and assume that 
	\begin{align}
	d_{\mathcal{F}}(\hat{\boldsymbol{x}})
	< \left[1+ {n-1\choose r-1}\right]^{-1} \frac{s(\hat{\boldsymbol{x}})}{2\|\boldsymbol{P}\|_2}\;, \label{thm2ineq}
	\end{align}
	where ${n-1\choose r-1}$ denotes the binomial coefficient ``$n-1$ choose $r-1$'' and the distance function  $d_{\mathcal{F}}(\cdot)$, sensitivity function $s(\cdot)$ and pencil norm $\|\boldsymbol{P}\|_2$ are given by Definitions \ref{def_feas_dis}, \ref{GLICQ} and \ref{def_pen}, respectively.
	If $\eta$ is sufficiently large,  then the convex problem \eqref{prob_lifted_pen_obj}--\eqref{prob_lifted_pen_conic} has a unique optimal solution $(\accentset{\ast}{\boldsymbol{x}},\accentset{\ast}{\boldsymbol{X}})$ such that $\accentset{\ast}{\boldsymbol{X}}=\accentset{\ast}{\boldsymbol{x}}\accentset{\ast}{\boldsymbol{x}}^{\top}$ and $\accentset{\ast}{\boldsymbol{x}}$ is feasible for \eqref{prob_obj}--\eqref{prob_eq}.
\end{theorem}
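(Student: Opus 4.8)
The plan is to run a Karush--Kuhn--Tucker / dual-certificate argument, extending the proof of Theorem~\ref{thm1} to the infeasible case. First I would rewrite the penalty term as $\mathrm{tr}\{\boldsymbol{X}-\boldsymbol{x}\boldsymbol{x}^\top\}+\|\boldsymbol{x}-\hat{\boldsymbol{x}}\|_2^2$, so that the penalized objective \eqref{prob_lifted_pen_obj} reads $q_0(\boldsymbol{x})+\langle\boldsymbol{A}_0+\eta\boldsymbol{I}_n,\boldsymbol{X}-\boldsymbol{x}\boldsymbol{x}^\top\rangle+\eta\|\boldsymbol{x}-\hat{\boldsymbol{x}}\|_2^2$. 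Since $r\ge 2$, every $2\times 2$ principal submatrix of the gap matrix $\boldsymbol{G}\triangleq\boldsymbol{X}-\boldsymbol{x}\boldsymbol{x}^\top\in\mathcal{C}_r$ is positive semidefinite, whence $\mathrm{tr}\{\boldsymbol{G}\}\ge 0$ with equality iff $\boldsymbol{G}=\boldsymbol{0}_n$; this makes the penalty nonnegative and the objective coercive for large $\eta$. Letting $\boldsymbol{x}^{\circ}\in\mathcal{F}$ attain $\|\boldsymbol{x}^{\circ}-\hat{\boldsymbol{x}}\|_2=d_{\mathcal{F}}(\hat{\boldsymbol{x}})$, the lifted point $(\boldsymbol{x}^{\circ},\boldsymbol{x}^{\circ}\boldsymbol{x}^{\circ\top})$ is feasible for \eqref{prob_lifted_pen_obj}--\eqref{prob_lifted_pen_conic} with objective $q_0(\boldsymbol{x}^{\circ})+\eta\, d_{\mathcal{F}}(\hat{\boldsymbol{x}})^2$, which together with coercivity yields a minimizer $(\accentset{\ast}{\boldsymbol{x}},\accentset{\ast}{\boldsymbol{X}})$ and the estimate $\|\accentset{\ast}{\boldsymbol{x}}-\hat{\boldsymbol{x}}\|_2\le d_{\mathcal{F}}(\hat{\boldsymbol{x}})+O(1/\eta)$.

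Next I would write the KKT system. Stationarity in $\boldsymbol{X}$ produces the dual matrix $\boldsymbol{M}\triangleq\boldsymbol{A}_0+\eta\boldsymbol{I}_n+\boldsymbol{P}(\boldsymbol{\gamma},\boldsymbol{\mu})\in\mathcal{C}_r^{\ast}$, where $(\boldsymbol{\gamma},\boldsymbol{\mu})$ are the multipliers of \eqref{prob_lifted_pen_ineq}--\eqref{prob_lifted_pen_eq} and $\boldsymbol{P}$ is the pencil of Definition~\ref{def_pen}; complementary slackness reads $\langle\boldsymbol{M},\boldsymbol{G}\rangle=0$. Stationarity in $\boldsymbol{x}$ (using $\boldsymbol{A}_k\accentset{\ast}{\boldsymbol{x}}+\boldsymbol{b}_k=\tfrac12\nabla q_k(\accentset{\ast}{\boldsymbol{x}})$ and the term $\boldsymbol{x}^\top\boldsymbol{M}\boldsymbol{x}$ coming from $-\langle\boldsymbol{M},\boldsymbol{X}-\boldsymbol{x}\boldsymbol{x}^\top\rangle$) yields the exact identity
\begin{align}
\eta\,(\hat{\boldsymbol{x}}-\accentset{\ast}{\boldsymbol{x}})=\tfrac12\nabla q_0(\accentset{\ast}{\boldsymbol{x}})+\tfrac12\sum_{k}\lambda_k\nabla q_k(\accentset{\ast}{\boldsymbol{x}}),
\end{align}
which holds without presupposing $\boldsymbol{G}=\boldsymbol{0}_n$. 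The closing observation is standard: as $\mathcal{C}_r$ is a proper cone, $\boldsymbol{M}\in\mathrm{int}(\mathcal{C}_r^{\ast})$ together with $\boldsymbol{G}\in\mathcal{C}_r$ and $\langle\boldsymbol{M},\boldsymbol{G}\rangle=0$ forces $\boldsymbol{G}=\boldsymbol{0}_n$, i.e. $\accentset{\ast}{\boldsymbol{X}}=\accentset{\ast}{\boldsymbol{x}}\accentset{\ast}{\boldsymbol{x}}^\top$, after which $\bar q_k(\accentset{\ast}{\boldsymbol{x}},\accentset{\ast}{\boldsymbol{x}}\accentset{\ast}{\boldsymbol{x}}^\top)=q_k(\accentset{\ast}{\boldsymbol{x}})$ certifies feasibility for \eqref{prob_obj}--\eqref{prob_eq}. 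Everything therefore reduces to proving $\boldsymbol{M}\in\mathrm{int}(\mathcal{C}_r^{\ast})$.

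To that end I would first control the multipliers. The role of the quasi-binding set $\hat{\mathcal{B}}$ of Definition~\ref{GLICQ} is exactly to guarantee that any constraint whose margin exceeds its first- plus second-order Taylor variation over a ball of radius $d_{\mathcal{F}}(\hat{\boldsymbol{x}})$ stays strictly inactive at $\accentset{\ast}{\boldsymbol{x}}$ and hence carries a zero multiplier; thus only rows of $\mathcal{J}_{\hat{\mathcal{B}}}(\hat{\boldsymbol{x}})$ contribute. Applying $\sigma_{\min}(\mathcal{J}_{\hat{\mathcal{B}}})=s(\hat{\boldsymbol{x}})$ to the identity above gives $\|(\boldsymbol{\gamma},\boldsymbol{\mu})\|_2\le s(\hat{\boldsymbol{x}})^{-1}\bigl(2\eta\|\hat{\boldsymbol{x}}-\accentset{\ast}{\boldsymbol{x}}\|_2+\|\nabla q_0(\accentset{\ast}{\boldsymbol{x}})\|_2\bigr)$, so $\|\boldsymbol{P}(\boldsymbol{\gamma},\boldsymbol{\mu})\|_2\le \|\boldsymbol{P}\|_2\,s(\hat{\boldsymbol{x}})^{-1}\bigl(2\eta\, d_{\mathcal{F}}(\hat{\boldsymbol{x}})+O(1)\bigr)$. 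I would then certify membership in $\mathrm{int}(\mathcal{C}_r^{\ast})$ through the averaging identity $\boldsymbol{I}_n=\binom{n-1}{r-1}^{-1}\sum_{|\mathcal{S}|=r}\boldsymbol{I}_{\mathcal{S}}$, where $\boldsymbol{I}_{\mathcal{S}}$ is the identity supported on $\mathcal{S}$ and each index lies in $\binom{n-1}{r-1}$ of the $r$-subsets: distributing the error $\boldsymbol{A}_0+\boldsymbol{P}(\boldsymbol{\gamma},\boldsymbol{\mu})$ across the same blocks, each block $\tfrac{\eta}{\binom{n-1}{r-1}}\boldsymbol{I}_{\mathcal{S}}+\boldsymbol{E}_{\mathcal{S}}$ is positive definite once $\tfrac{\eta}{\binom{n-1}{r-1}}$ dominates its per-block error. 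Matching the leading $\eta$-orders, this holds precisely when $d_{\mathcal{F}}(\hat{\boldsymbol{x}})$ lies below the threshold of \eqref{thm2ineq}, the factor $[1+\binom{n-1}{r-1}]^{-1}s(\hat{\boldsymbol{x}})/(2\|\boldsymbol{P}\|_2)$ being the balance point between the stabilizing $\eta\boldsymbol{I}_n$ and the multiplier-driven growth of $\boldsymbol{P}$. Uniqueness then follows since $\eta\|\boldsymbol{x}-\hat{\boldsymbol{x}}\|_2^2$ pins down $\accentset{\ast}{\boldsymbol{x}}$ and $\boldsymbol{G}=\boldsymbol{0}_n$ pins down $\accentset{\ast}{\boldsymbol{X}}$.

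The main obstacle is this last step: both the stabilizer $\eta\boldsymbol{I}_n$ and the destabilizer $\boldsymbol{P}(\boldsymbol{\gamma},\boldsymbol{\mu})$ scale linearly in $\eta$, so tightness is not automatic for large $\eta$ and hinges entirely on the quantitative bound \eqref{thm2ineq}. The delicate bookkeeping is twofold: (i) proving that constraints outside $\hat{\mathcal{B}}$ genuinely remain inactive at $\accentset{\ast}{\boldsymbol{x}}$, which is where the second-order term $\tfrac12\|\nabla^2 q_k(\hat{\boldsymbol{x}})\|_2\,d_{\mathcal{F}}(\hat{\boldsymbol{x}})^2$ in the quasi-binding definition enters, via a Taylor estimate along the segment from $\hat{\boldsymbol{x}}$ to $\accentset{\ast}{\boldsymbol{x}}$; and (ii) selecting the distribution $\{\boldsymbol{E}_{\mathcal{S}}\}$ of the error matrix so that its per-block norms are dominated by the $\binom{n-1}{r-1}$ copies of the identity, which is what converts $\|\boldsymbol{P}\|_2$ and the combinatorial factor into the exact constant of \eqref{thm2ineq}.
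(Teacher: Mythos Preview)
Your overall architecture---rewrite the penalty, bound the distance $\|\accentset{\ast}{\boldsymbol{x}}-\hat{\boldsymbol{x}}\|_2$, bound the multipliers via the Jacobian, then certify $\boldsymbol{M}\in\mathrm{int}(\mathcal{C}_r^{\ast})$ by the block-averaging identity---matches the paper's. But there is a genuine gap in the multiplier bound, and it is exactly the step that produces the constant $[1+\binom{n-1}{r-1}]^{-1}$ rather than $\binom{n-1}{r-1}^{-1}$.

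Your stationarity identity involves $\nabla q_k(\accentset{\ast}{\boldsymbol{x}})$, i.e.\ the Jacobian $\mathcal{J}_{\hat{\mathcal{B}}}(\accentset{\ast}{\boldsymbol{x}})$ evaluated at the \emph{optimum}, yet you invert it using $s(\hat{\boldsymbol{x}})=\sigma_{\min}(\mathcal{J}_{\hat{\mathcal{B}}}(\hat{\boldsymbol{x}}))$, the singular value at the \emph{initial} point. These differ, and the gap is of order $2\|\boldsymbol{P}\|_2\|\accentset{\ast}{\boldsymbol{x}}-\hat{\boldsymbol{x}}\|_2\approx 2\|\boldsymbol{P}\|_2\,d_{\mathcal{F}}(\hat{\boldsymbol{x}})$, since each row $\nabla q_k$ is affine with slope $2\boldsymbol{A}_k$. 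The paper isolates this as a separate lemma (Lemma~\ref{lemma2}): $s(\accentset{\ast}{\boldsymbol{x}})\ge s(\hat{\boldsymbol{x}})-2d_{\mathcal{F}}(\hat{\boldsymbol{x}})\|\boldsymbol{P}\|_2-\varepsilon$. Plugging this corrected denominator into the multiplier bound gives $\eta^{-1}\|(\boldsymbol{\gamma},\boldsymbol{\mu})\|_2\lesssim 2d_{\mathcal{F}}(\hat{\boldsymbol{x}})/\bigl(s(\hat{\boldsymbol{x}})-2d_{\mathcal{F}}(\hat{\boldsymbol{x}})\|\boldsymbol{P}\|_2\bigr)$, and matching this against the block condition $\eta^{-1}\|(\boldsymbol{\gamma},\boldsymbol{\mu})\|_2<\binom{n-1}{r-1}^{-1}\|\boldsymbol{P}\|_2^{-1}$ is what yields the ``$1+$'' in \eqref{thm2ineq}. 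With your uncorrected bound the threshold would come out as $\binom{n-1}{r-1}^{-1}s(\hat{\boldsymbol{x}})/(2\|\boldsymbol{P}\|_2)$, which is too generous.

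There is a second, structural issue with working directly from the convex KKT. Your argument that constraints $k\notin\hat{\mathcal{B}}$ carry zero multiplier relies on a Taylor expansion of $q_k$ around $\hat{\boldsymbol{x}}$ to show the constraint is slack at $\accentset{\ast}{\boldsymbol{x}}$. But in the convex problem the constraint is $\bar q_k(\accentset{\ast}{\boldsymbol{x}},\accentset{\ast}{\boldsymbol{X}})=q_k(\accentset{\ast}{\boldsymbol{x}})+\langle\boldsymbol{A}_k,\boldsymbol{G}\rangle$, and you have no a~priori control on $\langle\boldsymbol{A}_k,\boldsymbol{G}\rangle$ before establishing $\boldsymbol{G}=\boldsymbol{0}$---which is the conclusion you are trying to reach. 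The paper sidesteps this circularity by running the entire multiplier argument on the \emph{nonconvex} penalized problem \eqref{prob_pen_obj}--\eqref{prob_pen_eq}, where the constraints really are $q_k(\boldsymbol{x})\le 0$; LICQ at its optimum (guaranteed by the perturbation lemma above) yields multipliers, and only then are those multipliers exported to the convex SDP as a dual certificate (Lemma~\ref{lemma5}). You should reroute your argument through the nonconvex problem in the same way.
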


\begin{proof}
The proof is given in Section \ref{sec:proofs}.
\end{proof}

{
The motivation behind Theorem \ref{thm2} is to show that even an infeasible initial point can produce feasible points.
It should be noted that, in general, it is computationally hard to calculate the exact distance from $\mathcal{F}$ and to verify GLICQ as a consequence. However, local search methods can be used in practice to find a local solution for \eqref{dist_formula}, resulting in upper bounds on feasibility distance. In Section \ref{sec:exp}, we use this simple technique to verify condition \eqref{thm2ineq} for several benchmark cases. {\color{black} Despite the theoretical insights offered by Theorems \ref{thm1} and \ref{thm2}, they do not provide practical bounds for $\eta$. Additionally, this section is primarily focused on offering a non-constructive proof for the existence of $\eta$ and we leave the derivation of analytical bounds for future work. In Section \ref{sec:exp}, we demonstrate that for real-world problems, appropriate choices of $\eta$ can be found via a simple bisection technique.
}
}

\subsection{Proof of theorems}\label{sec:proofs}
The rest of this section is devoted to proving Theorems~\ref{thm1}~and~\ref{thm2}.
To this end,
it is convenient to consider the following optimization problem:
\begin{subequations}\begin{align}
	\underset{\boldsymbol{x}\in\mathbb{R}^n}{\text{minimize}} \ \ 
	&  q_0(\boldsymbol{x})  + \eta\|\boldsymbol{x}-\hat{\boldsymbol{x}}\|^2_2 \label{prob_pen_obj}\\
	\ \ \ \  \text{s.t.} \ \ \ 
	&  q_k(\boldsymbol{x})\leq 0, \quad \quad  k\in\mathcal{I}\label{prob_pen_ineq}\\
	& 
	q_k(\boldsymbol{x})=0, \quad \quad  k\in\mathcal{E}. \label{prob_pen_eq}
	\end{align}\end{subequations}
{
Observe that the problem \eqref{prob_lifted_pen_obj} -- \eqref{prob_lifted_pen_conic} is a convex relaxation of \eqref{prob_pen_obj} -- \eqref{prob_pen_eq} and this is the motivation behind its introduction.}

Consider an $\alpha>0$ for which the inequality
\begin{align}
|q_0(\boldsymbol{x})|\leq\alpha\|\boldsymbol{x}-\hat{\boldsymbol{x}}\|_2^2+\alpha,\label{alpha_aprox}
\end{align}
is satisfied for every $\boldsymbol{x}\in\mathbb{R}^n$. If $\eta>\alpha$, then the objective function \eqref{prob_pen_obj} is lower bounded by $-\alpha$. {\color{black}Hence, if $\mathcal{F}$ is non-empty, then the optimal solution of \eqref{prob_pen_obj} -- \eqref{prob_pen_eq} is attainable, i.e., there exists $\accentset{\ast}{\boldsymbol{x}}\in\mathcal{F}$ which satisfies 
\begin{align}
q_0(\accentset{\ast}{\boldsymbol{x}}) \leq q_0(\boldsymbol{x}) \nonumber
\end{align}
for every $\boldsymbol{x}\in\mathcal{F}$.}
To prove the existence of $\alpha$, assume that
\begin{subequations}
	\begin{align}
	\alpha\geq\;&\sigma_{\max}
	\left(\begin{bmatrix}
	\boldsymbol{A}_0 & \;\;\boldsymbol{b}_0 \\
	\boldsymbol{b}_0^{\top} & 
	\;\;-\hat{\boldsymbol{x}}^{\!\top}\!\boldsymbol{A}_0\hat{\boldsymbol{x}}
	-2\boldsymbol{b}_0^{\!\top}\hat{\boldsymbol{x}}
	\end{bmatrix}\right)
	\sigma^{-1}_{\min}
	\left(\begin{bmatrix}
	\,\boldsymbol{I}_{\! n} & \;-\hat{\boldsymbol{x}} \\
	-\hat{\boldsymbol{x}}^{\!\top} & \;\frac{1}{2}\!+\!\hat{\boldsymbol{x}}^{\!\top}\!\hat{\boldsymbol{x}}
	\end{bmatrix}\right)
	\\
	\alpha\geq\;&2|\hat{\boldsymbol{x}}^{\!\top}\!\boldsymbol{A}_0\hat{\boldsymbol{x}}
	+2\boldsymbol{b}_0^{\!\top}\hat{\boldsymbol{x}}
	+c_0|
	\end{align}
\end{subequations}
then we have
\begin{subequations}
\begin{align}
|q_0(\boldsymbol{x})|&=
\Big\vert\!
\begin{bmatrix}
\boldsymbol{x}^{\!\top} & 1
\end{bmatrix}\!\!
\begin{bmatrix}
\boldsymbol{A}_0 & \;\;\boldsymbol{b}_0 \\
\boldsymbol{b}_0^{\top} & 
\;\;-\hat{\boldsymbol{x}}^{\!\top}\!\boldsymbol{A}_0\hat{\boldsymbol{x}}
-2\boldsymbol{b}_0^{\!\top}\hat{\boldsymbol{x}}
\end{bmatrix}\!\!
\begin{bmatrix}
\boldsymbol{x} \\ 1
\end{bmatrix}
+\hat{\boldsymbol{x}}^{\!\top}\!\boldsymbol{A}_0\hat{\boldsymbol{x}}
+2\boldsymbol{b}_0^{\!\top}\hat{\boldsymbol{x}}
+c_0
\Big\vert
\\
&\leq
\alpha\, 
\begin{bmatrix}
\boldsymbol{x}^{\!\top} & 1
\end{bmatrix}\!\!
\begin{bmatrix}
\,\boldsymbol{I}_{\! n} & \;-\hat{\boldsymbol{x}} \\
-\hat{\boldsymbol{x}}^{\!\top} & \;\frac{1}{2}\!+\!\hat{\boldsymbol{x}}^{\!\top}\!\hat{\boldsymbol{x}}
\end{bmatrix}\!\!
\begin{bmatrix}
\boldsymbol{x} \\ 1
\end{bmatrix}
+\frac{\alpha}{2}\\
&=\alpha\|\boldsymbol{x}-\hat{\boldsymbol{x}}\|_2^2+\alpha
\end{align}
\end{subequations}
which concludes \eqref{alpha_aprox}.

{
The next lemma shows that by increasing the penalty term $\eta$, the optimal solution $\accentset{\ast}{\boldsymbol{x}}$ can get as close to the initial point $\hat{\boldsymbol{x}}$ as $d_{\mathcal{F}}(\hat{\boldsymbol{x}})$. This lemma will later be used to show that $\accentset{\ast}{\boldsymbol{x}}$ can inherit the LICQ property from $\hat{\boldsymbol{x}}$.
}

\begin{lemma}\label{lemma1}
	Given an arbitrary $\hat{\boldsymbol{x}}\in\mathbb{R}^n$ and $\varepsilon>0$, for sufficiently large $\eta >0$, every optimal solution $\accentset{\ast}{\boldsymbol{x}}$ of the problem \eqref{prob_pen_obj}-\eqref{prob_pen_eq} satisfies
	\begin{align}
	0\leq\|\accentset{\ast}{\boldsymbol{x}}-\hat{\boldsymbol{x}}\|_2-d_{\mathcal{F}}(\hat{\boldsymbol{x}})\leq\varepsilon. \label{lemma1_ineq}
	\end{align}
	
\end{lemma}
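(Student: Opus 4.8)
The plan is to prove the two inequalities in \eqref{lemma1_ineq} separately, with the left one being immediate and the right one carrying all the work. For the left inequality, observe that any optimal solution $\accentset{\ast}{\boldsymbol{x}}$ of \eqref{prob_pen_obj}--\eqref{prob_pen_eq} is in particular feasible, i.e.\ $\accentset{\ast}{\boldsymbol{x}}\in\mathcal{F}$; hence, by the very definition of $d_{\mathcal{F}}$ in \eqref{dist_formula}, we have $\|\accentset{\ast}{\boldsymbol{x}}-\hat{\boldsymbol{x}}\|_2\geq d_{\mathcal{F}}(\hat{\boldsymbol{x}})$, which is exactly the left half of \eqref{lemma1_ineq}. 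Throughout I assume $\mathcal{F}\neq\emptyset$, as is needed for the statement (and for the attainment paragraph preceding the lemma) to be meaningful.

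For the right inequality, I would let $\boldsymbol{x}^{\natural}\in\mathcal{F}$ be a point attaining the minimum distance, so that $\|\boldsymbol{x}^{\natural}-\hat{\boldsymbol{x}}\|_2=d_{\mathcal{F}}(\hat{\boldsymbol{x}})$; such a point exists because $\mathcal{F}$ is closed and $\|\boldsymbol{x}-\hat{\boldsymbol{x}}\|_2$ is coercive. Writing $R\triangleq\|\accentset{\ast}{\boldsymbol{x}}-\hat{\boldsymbol{x}}\|_2$ and $D\triangleq d_{\mathcal{F}}(\hat{\boldsymbol{x}})$, optimality of $\accentset{\ast}{\boldsymbol{x}}$ tested against the feasible competitor $\boldsymbol{x}^{\natural}$ gives
\begin{align}
q_0(\accentset{\ast}{\boldsymbol{x}})+\eta R^2\leq q_0(\boldsymbol{x}^{\natural})+\eta D^2 . \nonumber
\end{align}
Rearranging and bounding the objective gap by the quadratic estimate \eqref{alpha_aprox} applied at both points, namely $|q_0(\accentset{\ast}{\boldsymbol{x}})|\leq\alpha R^2+\alpha$ and $|q_0(\boldsymbol{x}^{\natural})|\leq\alpha D^2+\alpha$, yields $(\eta-\alpha)R^2\leq(\eta+\alpha)D^2+2\alpha$. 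For $\eta>\alpha$ this isolates $R^2$ as
\begin{align}
R^2\leq D^2+\frac{2\alpha(D^2+1)}{\eta-\alpha} . \nonumber
\end{align}

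The final step is to take $\eta$ large. Since $(D+\varepsilon)^2\geq D^2+\varepsilon^2$, it suffices to force $\tfrac{2\alpha(D^2+1)}{\eta-\alpha}\leq\varepsilon^2$, that is, to pick any $\eta\geq\alpha+2\alpha(D^2+1)/\varepsilon^2$; then $R^2\leq D^2+\varepsilon^2\leq(D+\varepsilon)^2$, which is precisely $\|\accentset{\ast}{\boldsymbol{x}}-\hat{\boldsymbol{x}}\|_2\leq d_{\mathcal{F}}(\hat{\boldsymbol{x}})+\varepsilon$. Since this threshold for $\eta$ does not depend on \emph{which} minimizer is chosen, the bound holds for every optimal solution, matching the quantifier in the statement.

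I do not expect a deep obstacle here: once the right competitor $\boldsymbol{x}^{\natural}$ is selected, the argument is a one-line optimality comparison followed by arithmetic. The only genuine subtleties I anticipate are attainment issues --- verifying that the penalized problem admits a minimizer (guaranteed by coercivity for $\eta>\alpha$, established just before the lemma) and that a nearest feasible point $\boldsymbol{x}^{\natural}$ exists on the closed set $\mathcal{F}$ --- together with the bookkeeping needed to cancel the two $R^2$- and $D^2$-terms appearing on both sides, which is what forces the restriction $\eta>\alpha$ and explains why the estimate \eqref{alpha_aprox} was deliberately set up to control $q_0$ by $\|\boldsymbol{x}-\hat{\boldsymbol{x}}\|_2^2$ rather than by $\|\boldsymbol{x}\|_2^2$.
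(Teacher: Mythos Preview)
Your proof is correct and follows essentially the same route as the paper: the left inequality by feasibility of $\accentset{\ast}{\boldsymbol{x}}$, and the right by comparing the optimal value at $\accentset{\ast}{\boldsymbol{x}}$ against the feasible competitor $\boldsymbol{x}^{\natural}$ (the paper's $\boldsymbol{x}_d$), then absorbing the $q_0$ terms via \eqref{alpha_aprox} to obtain $R^2\leq D^2+\tfrac{2\alpha(D^2+1)}{\eta-\alpha}$. The only cosmetic difference is that the paper keeps the full cross term and requires $\eta\geq\alpha+2\alpha(1+D^2)/(\varepsilon^2+2\varepsilon D)$, whereas you drop $2\varepsilon D$ and use the slightly larger threshold $\eta\geq\alpha+2\alpha(1+D^2)/\varepsilon^2$; either suffices.
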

\begin{proof}
	Consider an optimal solution $\accentset{\ast}{\boldsymbol{x}}$. Due to Definition \ref{def_feas_dis}, the distance between $\hat{\boldsymbol{x}}$ and every member of $\mathcal{F}$ is not less than $d_{\mathcal{F}}(\hat{\boldsymbol{x}})$, which concludes the left side of \eqref{lemma1_ineq}.
	Let $\boldsymbol{x}_d$ be an arbitrary member of the set $\{\boldsymbol{x}\in\mathcal{F}\;|\;\|\boldsymbol{x}-\hat{\boldsymbol{x}}\|_2=d_{\mathcal{F}}(\hat{\boldsymbol{x}})\}$. Due to the optimality of $\accentset{\ast}{\boldsymbol{x}}$, we have
	\begin{align}
	q_0(\accentset{\ast}{\boldsymbol{x}})+\eta\|\accentset{\ast}{\boldsymbol{x}}-\hat{\boldsymbol{x}}\|^2_2\leq
	q_0(\boldsymbol{x}_d)+\eta\|\boldsymbol{x}_d-\hat{\boldsymbol{x}}\|^2_2.\label{lll1}
	\end{align}
	According to the inequalities \eqref{lll1} and \eqref{alpha_aprox}, one can write
	\begin{subequations}
		\begin{align}
		&\hspace{-2mm}(\eta-\alpha)\|\accentset{\ast}{\boldsymbol{x}}-\hat{\boldsymbol{x}}\|^2_2-\alpha\leq
		(\eta+\alpha)\|\boldsymbol{x}_d-\hat{\boldsymbol{x}}\|^2_2+\alpha\quad\\
		&\hspace{-1mm}\Rightarrow\;\|\accentset{\ast}{\boldsymbol{x}}-\hat{\boldsymbol{x}}\|^2_2\leq
		\|\boldsymbol{x}_d-\hat{\boldsymbol{x}}\|^2_2+\frac{2\alpha}{\eta-\alpha}(1+\|\boldsymbol{x}_d-\hat{\boldsymbol{x}}\|^2_2)\hspace{-1mm}\\
		&\hspace{-1mm}\Rightarrow\;\|\accentset{\ast}{\boldsymbol{x}}-\hat{\boldsymbol{x}}\|^2_2\leq
		d_{\mathcal{F}}(\hat{\boldsymbol{x}})^2+\frac{2\alpha}{\eta-\alpha}(1+d_{\mathcal{F}}(\hat{\boldsymbol{x}})^2),\hspace{-1mm}
		\end{align}
	\end{subequations}
	which concludes the right side of \eqref{lemma1_ineq}, provided that $\eta\geq\alpha+2\alpha(1+d_{\mathcal{F}}(\hat{\boldsymbol{x}})^2)[\varepsilon^2+2\varepsilon d_{\mathcal{F}}(\hat{\boldsymbol{x}})]^{-1}$.
\end{proof}

\begin{lemma}\label{lemma2}
	Assume that $\hat{\boldsymbol{x}}\in\mathbb{R}^n$ satisfies the GLICQ condition for the problem \eqref{prob_pen_obj}--\eqref{prob_pen_eq}. Given an arbitrary $\varepsilon>0$, for sufficiently large $\eta>0$, 
	every optimal solution $\accentset{\ast}{\boldsymbol{x}}$ of the problem satisfies
	\begin{align}
	s(\hat{\boldsymbol{x}})-s(\accentset{\ast}{\boldsymbol{x}})\leq 2 d_{\mathcal{F}}(\hat{\boldsymbol{x}})\|\boldsymbol{P}\|_2+\varepsilon.\label{lemma2eq}
	\end{align}
	
\end{lemma}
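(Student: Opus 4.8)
The plan is to compare the two Jacobians $\mathcal{J}_{\hat{\mathcal{B}}}(\hat{\boldsymbol{x}})$ and $\mathcal{J}_{\accentset{\ast}{\mathcal{B}}}(\accentset{\ast}{\boldsymbol{x}})$ that define $s(\hat{\boldsymbol{x}})$ and $s(\accentset{\ast}{\boldsymbol{x}})$ by routing through the intermediate matrix $\mathcal{J}_{\accentset{\ast}{\mathcal{B}}}(\hat{\boldsymbol{x}})$, so that the index set and the evaluation point change one at a time. First I would note that $\accentset{\ast}{\boldsymbol{x}}$ is feasible for \eqref{prob_pen_ineq}--\eqref{prob_pen_eq}, hence $d_{\mathcal{F}}(\accentset{\ast}{\boldsymbol{x}})=0$ and the GLICQ condition at $\accentset{\ast}{\boldsymbol{x}}$ collapses to ordinary LICQ; consequently its set of quasi-binding constraints is exactly the binding set $\accentset{\ast}{\mathcal{B}}=\mathcal{E}\cup\{k\in\mathcal{I}\,|\,q_k(\accentset{\ast}{\boldsymbol{x}})=0\}$, and $s(\accentset{\ast}{\boldsymbol{x}})=\sigma_{\min}(\mathcal{J}_{\accentset{\ast}{\mathcal{B}}}(\accentset{\ast}{\boldsymbol{x}}))$ holds whether or not LICQ is satisfied, since a rank-deficient Jacobian simply yields $\sigma_{\min}=0$.

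The key combinatorial step is the inclusion $\accentset{\ast}{\mathcal{B}}\subseteq\hat{\mathcal{B}}$, i.e., every constraint binding at $\accentset{\ast}{\boldsymbol{x}}$ is quasi-binding at $\hat{\boldsymbol{x}}$. Writing $\boldsymbol{\delta}\triangleq\accentset{\ast}{\boldsymbol{x}}-\hat{\boldsymbol{x}}$ and using that each $q_k$ is quadratic, the exact expansion $q_k(\accentset{\ast}{\boldsymbol{x}})=q_k(\hat{\boldsymbol{x}})+\nabla q_k(\hat{\boldsymbol{x}})^{\!\top}\boldsymbol{\delta}+\boldsymbol{\delta}^{\!\top}\boldsymbol{A}_k\boldsymbol{\delta}$ together with Cauchy--Schwarz and operator-norm bounds gives $q_k(\accentset{\ast}{\boldsymbol{x}})\leq q_k(\hat{\boldsymbol{x}})+\|\nabla q_k(\hat{\boldsymbol{x}})\|_2\|\boldsymbol{\delta}\|_2+\tfrac{1}{2}\|\nabla^2 q_k(\hat{\boldsymbol{x}})\|_2\|\boldsymbol{\delta}\|_2^2$. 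As the right-hand side is increasing in $\|\boldsymbol{\delta}\|_2$, any inequality index that is \emph{not} quasi-binding at $\hat{\boldsymbol{x}}$ has its Definition~\ref{GLICQ} expression \emph{strictly} negative at the argument $d_{\mathcal{F}}(\hat{\boldsymbol{x}})$; by Lemma~\ref{lemma1} I may force $\|\boldsymbol{\delta}\|_2\leq d_{\mathcal{F}}(\hat{\boldsymbol{x}})+\varepsilon'$ for a small $\varepsilon'>0$, and since there are finitely many constraints, continuity lets me choose $\varepsilon'$ (hence $\eta$) so that the bound remains strictly negative, forcing $q_k(\accentset{\ast}{\boldsymbol{x}})<0$ and thus $k\notin\accentset{\ast}{\mathcal{B}}$. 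Reconciling the distance $d_{\mathcal{F}}(\hat{\boldsymbol{x}})$ used in Definition~\ref{GLICQ} with the only attainable radius $\|\boldsymbol{\delta}\|_2$ is the main obstacle, and it is precisely where Lemma~\ref{lemma1} is indispensable.

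With the inclusion established, I would invoke two monotonicity facts. First, deleting rows can only increase the smallest singular value in the fat-matrix sense $\sigma_{\min}(\boldsymbol{M})=\min_{\|\boldsymbol{u}\|_2=1}\|\boldsymbol{M}^{\!\top}\boldsymbol{u}\|_2$ (zero-pad the minimizing vector of the submatrix), so that $\sigma_{\min}(\mathcal{J}_{\accentset{\ast}{\mathcal{B}}}(\hat{\boldsymbol{x}}))\geq\sigma_{\min}(\mathcal{J}_{\hat{\mathcal{B}}}(\hat{\boldsymbol{x}}))=s(\hat{\boldsymbol{x}})$. Second, Weyl's perturbation inequality for singular values gives $s(\accentset{\ast}{\boldsymbol{x}})=\sigma_{\min}(\mathcal{J}_{\accentset{\ast}{\mathcal{B}}}(\accentset{\ast}{\boldsymbol{x}}))\geq\sigma_{\min}(\mathcal{J}_{\accentset{\ast}{\mathcal{B}}}(\hat{\boldsymbol{x}}))-\|\mathcal{J}_{\accentset{\ast}{\mathcal{B}}}(\accentset{\ast}{\boldsymbol{x}})-\mathcal{J}_{\accentset{\ast}{\mathcal{B}}}(\hat{\boldsymbol{x}})\|_2$. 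Chaining the two yields $s(\hat{\boldsymbol{x}})-s(\accentset{\ast}{\boldsymbol{x}})\leq\|\mathcal{J}_{\accentset{\ast}{\mathcal{B}}}(\accentset{\ast}{\boldsymbol{x}})-\mathcal{J}_{\accentset{\ast}{\mathcal{B}}}(\hat{\boldsymbol{x}})\|_2$.

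Finally I would bound this Jacobian perturbation through the pencil norm. Each row of the difference equals $\nabla q_k(\accentset{\ast}{\boldsymbol{x}})-\nabla q_k(\hat{\boldsymbol{x}})=2\boldsymbol{A}_k\boldsymbol{\delta}$, so for any unit vector $\boldsymbol{u}$ indexed by $\accentset{\ast}{\mathcal{B}}$ (zero-padded over $\mathcal{I}\cup\mathcal{E}$) the transposed action is $2\boldsymbol{P}(\boldsymbol{\gamma},\boldsymbol{\mu})\boldsymbol{\delta}$, whence $\|\mathcal{J}_{\accentset{\ast}{\mathcal{B}}}(\accentset{\ast}{\boldsymbol{x}})-\mathcal{J}_{\accentset{\ast}{\mathcal{B}}}(\hat{\boldsymbol{x}})\|_2\leq 2\|\boldsymbol{P}\|_2\|\boldsymbol{\delta}\|_2$ by Definition~\ref{def_pen} and homogeneity of the pencil norm. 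Using $\|\boldsymbol{\delta}\|_2\leq d_{\mathcal{F}}(\hat{\boldsymbol{x}})+\varepsilon'$ from Lemma~\ref{lemma1} gives $s(\hat{\boldsymbol{x}})-s(\accentset{\ast}{\boldsymbol{x}})\leq 2\|\boldsymbol{P}\|_2 d_{\mathcal{F}}(\hat{\boldsymbol{x}})+2\|\boldsymbol{P}\|_2\varepsilon'$, and choosing $\varepsilon'\leq\varepsilon/(2\|\boldsymbol{P}\|_2)$ (small enough to also validate the inclusion step) establishes \eqref{lemma2eq}.
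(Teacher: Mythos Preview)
Your proposal is correct and follows essentially the same approach as the paper: establish $\accentset{\ast}{\mathcal{B}}\subseteq\hat{\mathcal{B}}$ via the quadratic Taylor expansion and Lemma~\ref{lemma1}, use monotonicity of $\sigma_{\min}$ under row deletion, and bound the Jacobian perturbation by $2\|\boldsymbol{P}\|_2\|\accentset{\ast}{\boldsymbol{x}}-\hat{\boldsymbol{x}}\|_2$. The only cosmetic difference is the choice of intermediate matrix---you route through $\mathcal{J}_{\accentset{\ast}{\mathcal{B}}}(\hat{\boldsymbol{x}})$ whereas the paper routes through $\mathcal{J}_{\hat{\mathcal{B}}}(\accentset{\ast}{\boldsymbol{x}})$---but this is a symmetric choice with no substantive effect.
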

\begin{proof}
	Let $\hat{\mathcal{B}}$ and $\accentset{\ast}{\mathcal{B}}$ denote the sets of quasi-binding constraints for $\hat{\boldsymbol{x}}$ and binding constraints for $\accentset{\ast}{\boldsymbol{x}}$, respectively (based on Definition \ref{GLICQ}). Due to Lemma \ref{lemma1}, for every $k\in\mathcal{I}\setminus\hat{\mathcal{B}}$ and every arbitrary $\varepsilon_1>0$, we have
	\begin{align}
	\!\!\!q_k(\accentset{\ast}{\boldsymbol{x}})\hspace{-4mm}&\hspace{4mm}\!-\!q_k(\hat{\boldsymbol{x}})\!= 
	2(\boldsymbol{A}_k\hat{\boldsymbol{x}}\!+\!\boldsymbol{b}_k)^{\!\top}(\accentset{\ast}{\boldsymbol{x}} - \hat{\boldsymbol{x}}) \!+\!(\accentset{\ast}{\boldsymbol{x}} - \hat{\boldsymbol{x}})^{\!\top}\!\boldsymbol{A}_k(\accentset{\ast}{\boldsymbol{x}} - \hat{\boldsymbol{x}})\nonumber\\
	&\hspace{-2mm}\leq 
	\|\nabla q_k(\hat{\boldsymbol{x}})\|_2\|\accentset{\ast}{\boldsymbol{x}} - \hat{\boldsymbol{x}}\|_2 + 
	\|\boldsymbol{A}_k\|_2\|\accentset{\ast}{\boldsymbol{x}} - \hat{\boldsymbol{x}}\|^2_2\nonumber\\
	&\hspace{-2mm}\leq 
	\|\nabla q_k(\hat{\boldsymbol{x}})\|_2d_{\mathcal{F}}(\hat{\boldsymbol{x}}) + 
	\|\boldsymbol{A}_k\|_2d_{\mathcal{F}}(\hat{\boldsymbol{x}})^2+\varepsilon_1\! <\!-q_k(\hat{\boldsymbol{x}}),
	\end{align}
	if $\eta$ is sufficiently large, which yields $\accentset{\ast}{\mathcal{B}}\subseteq\hat{\mathcal{B}}$. Let $\boldsymbol{\nu}\in\mathbb{R}^{|\hat{\mathcal{B}}|}$ be the left singular vector of $\mathcal{J}_{\hat{\mathcal{B}}}(\accentset{\ast}{\boldsymbol{x}})$, corresponding to the smallest singular value. Hence
	\begin{subequations}	
		\begin{align}
		\!\!\!\!s(\accentset{\ast}{\boldsymbol{x}}) 
		&={\sigma_{\min}}\{\mathcal{J}_{\accentset{\ast}{\mathcal{B}}}(\accentset{\ast}{\boldsymbol{x}})\}
		\geq{\sigma_{\min}}\{\mathcal{J}_{\hat{\mathcal{B}}}(\accentset{\ast}{\boldsymbol{x}})\}
		\!=\!\|\mathcal{J}_{\hat{\mathcal{B}}}(\accentset{\ast}{\boldsymbol{x}})^{\top}\boldsymbol{\nu}\|_2\\
		&\geq \|\mathcal{J}_{\hat{\mathcal{B}}}(\hat{\boldsymbol{x}})^{\top}\boldsymbol{\nu}\|_2
		-\|[\mathcal{J}_{\hat{\mathcal{B}}}(\hat{\boldsymbol{x}})-\mathcal{J}_{\hat{\mathcal{B}}}
		(\accentset{\ast}{\boldsymbol{x}})]^{\top}\boldsymbol{\nu}\|_2\\
		&\geq {\sigma_{\min}}\{\mathcal{J}_{\hat{\mathcal{B}}}(\hat{\boldsymbol{x}})\}\|\boldsymbol{\nu}\|_2
		-2\|\boldsymbol{P}\|_2\|\hat{\boldsymbol{x}}-\accentset{\ast}{\boldsymbol{x}}\|_2\|\boldsymbol{\nu}\|_2\\
		&\geq s(\hat{\boldsymbol{x}}) 
		-2\|\boldsymbol{P}\|_2\|\hat{\boldsymbol{x}}-\accentset{\ast}{\boldsymbol{x}}\|_2\\
		&\geq s(\hat{\boldsymbol{x}}) 
		-2 d_{\mathcal{F}}(\hat{\boldsymbol{x}})\|\boldsymbol{P}\|_2-\varepsilon,
		\end{align}
	\end{subequations}
	if $\eta$ is large, which concludes the inequality \eqref{lemma2eq}.
\end{proof}

{
In light of Lemma \ref{lemma2}, if $\hat{\boldsymbol{x}}$ is GLICQ regular and relatively close to $\mathcal{F}$, then $\accentset{\ast}{\boldsymbol{x}}$ is LICQ regular as well. This will be used next to prove the existence of Lagrange multipliers.}

\begin{lemma}\label{lemma3}
	Let $\accentset{\ast}{\boldsymbol{x}}$ be an optimal solution of the problem \eqref{prob_pen_obj}--\eqref{prob_pen_eq}, and assume that $\accentset{\ast}{\boldsymbol{x}}$ is LICQ regular. 
	There exists a pair of dual vectors $(\accentset{\ast}{\boldsymbol{\gamma}},\accentset{\ast}{\boldsymbol{\mu}})\in\mathbb{R}^{|\mathcal{I}|}_{+}\times\mathbb{R}^{|\mathcal{E}|}$ that satisfies the following Karush-Kuhn-Tucker (KKT) conditions:
	\begin{subequations}
		\begin{align}
		\!\!\!\!\!\!2(\eta\boldsymbol{I}\!\!+\!\boldsymbol{A}_0)(\accentset{\ast}{\boldsymbol{x}}\!-\!\hat{\boldsymbol{x}})
		\!+\!2(\boldsymbol{A}_0\hat{\boldsymbol{x}}+\boldsymbol{b}_0)\!+\!\mathcal{J}(\accentset{\ast}{\boldsymbol{x}})^{\!\top}
		[\accentset{\ast}{\boldsymbol{\gamma}}^{\!\top},\accentset{\ast}{\boldsymbol{\mu}}^{\!\top}]^{\!\top}\!\!\!=0,\!\!\!\! \label{KKT1}\\
		\accentset{\ast}{\gamma}_k q_k(\accentset{\ast}{\boldsymbol{\boldsymbol{x}}})=0,\qquad \forall k\in\mathcal{I}.\!\!\!\! \label{KKT2}
		\end{align}
	\end{subequations}
\end{lemma}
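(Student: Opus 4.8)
The plan is to recognize Lemma~\ref{lemma3} as a direct application of the classical first-order Karush--Kuhn--Tucker necessary optimality conditions for a constrained minimization problem at a point where a constraint qualification holds. The earlier analysis (using the bound \eqref{alpha_aprox} together with the choice $\eta>\alpha$) already guarantees that the penalized problem \eqref{prob_pen_obj}--\eqref{prob_pen_eq} attains its minimum at some $\accentset{\ast}{\boldsymbol{x}}\in\mathcal{F}$, so this $\accentset{\ast}{\boldsymbol{x}}$ is in particular a local minimizer of a smooth (indeed quadratic) objective subject to smooth (quadratic) constraints. The hypothesis that $\accentset{\ast}{\boldsymbol{x}}$ is LICQ regular supplies exactly the constraint qualification needed to certify the existence of Lagrange multipliers.

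First I would invoke the KKT theorem: because $\accentset{\ast}{\boldsymbol{x}}$ is a local minimizer and, by LICQ, the gradients of the active constraints are linearly independent, there exist multipliers $\accentset{\ast}{\boldsymbol{\gamma}}\in\mathbb{R}^{|\mathcal{I}|}_{+}$ for the inequality constraints and $\accentset{\ast}{\boldsymbol{\mu}}\in\mathbb{R}^{|\mathcal{E}|}$ for the equality constraints such that the gradient of the Lagrangian vanishes at $\accentset{\ast}{\boldsymbol{x}}$, dual feasibility $\accentset{\ast}{\boldsymbol{\gamma}}\geq\boldsymbol{0}$ holds, and complementary slackness \eqref{KKT2} is satisfied. (Under LICQ these multipliers are in fact unique, although uniqueness is not required here.)

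Second I would make the stationarity condition explicit by differentiating the objective. Writing $q_0(\boldsymbol{x})=\boldsymbol{x}^{\top}\boldsymbol{A}_0\boldsymbol{x}+2\boldsymbol{b}_0^{\top}\boldsymbol{x}+c_0$, the gradient of the penalized objective $q_0(\boldsymbol{x})+\eta\|\boldsymbol{x}-\hat{\boldsymbol{x}}\|_2^2$ evaluated at $\accentset{\ast}{\boldsymbol{x}}$ equals $2\boldsymbol{A}_0\accentset{\ast}{\boldsymbol{x}}+2\boldsymbol{b}_0+2\eta(\accentset{\ast}{\boldsymbol{x}}-\hat{\boldsymbol{x}})$. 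Regrouping this as $2(\eta\boldsymbol{I}+\boldsymbol{A}_0)(\accentset{\ast}{\boldsymbol{x}}-\hat{\boldsymbol{x}})+2(\boldsymbol{A}_0\hat{\boldsymbol{x}}+\boldsymbol{b}_0)$ recovers the first two terms of \eqref{KKT1}, while the weighted sum of constraint gradients $\sum_{k\in\mathcal{I}}\accentset{\ast}{\gamma}_k\nabla q_k(\accentset{\ast}{\boldsymbol{x}})+\sum_{k\in\mathcal{E}}\accentset{\ast}{\mu}_k\nabla q_k(\accentset{\ast}{\boldsymbol{x}})$ is precisely $\mathcal{J}(\accentset{\ast}{\boldsymbol{x}})^{\top}[\accentset{\ast}{\boldsymbol{\gamma}}^{\top},\accentset{\ast}{\boldsymbol{\mu}}^{\top}]^{\top}$ by Definition~\ref{Jacobian}. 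Setting the Lagrangian gradient to zero then delivers \eqref{KKT1}.

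Because the argument is a textbook invocation of KKT under LICQ, I do not expect a genuine obstacle; the only care required is bookkeeping --- correctly differentiating the quadratic penalty, regrouping the linear terms into the form displayed in \eqref{KKT1}, and confirming that the stacked constraint gradients coincide with $\mathcal{J}(\accentset{\ast}{\boldsymbol{x}})^{\top}$ as specified in Definition~\ref{Jacobian}. The substantive content is simply that LICQ regularity at $\accentset{\ast}{\boldsymbol{x}}$ furnishes the constraint qualification under which the first-order necessary conditions are guaranteed to hold.
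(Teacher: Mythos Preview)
Your proposal is correct and follows essentially the same approach as the paper: invoke the classical KKT necessary conditions under the LICQ constraint qualification to obtain multipliers satisfying stationarity and complementary slackness, then expand the stationarity condition explicitly and regroup the objective-gradient terms into the form $2(\eta\boldsymbol{I}+\boldsymbol{A}_0)(\accentset{\ast}{\boldsymbol{x}}-\hat{\boldsymbol{x}})+2(\boldsymbol{A}_0\hat{\boldsymbol{x}}+\boldsymbol{b}_0)$ to arrive at \eqref{KKT1}.
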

\begin{proof}	
	Due to the LICQ condition, there exists a pair of dual vectors $(\accentset{\ast}{\boldsymbol{\gamma}},\accentset{\ast}{\boldsymbol{\mu}})\in\mathbb{R}^{|\mathcal{I}|}_{+}\times\mathbb{R}^{|\mathcal{E}|}$, which satisfies the KKT stationarity and complementary slackness conditions. Due to stationarity, we have
	\begin{align}
	\!\!\!\!&\!\!\!0=\nabla_{\!\boldsymbol{x}}\,\mathcal{L}(\accentset{\ast}{\boldsymbol{x}},\accentset{\ast}{\boldsymbol{\gamma}},\accentset{\ast}{\boldsymbol{\mu}})/2\nonumber\\
	&=\eta(\accentset{\ast}{\boldsymbol{x}}-\hat{\boldsymbol{x}})\!+\!
	(\boldsymbol{A}_0\accentset{\ast}{\boldsymbol{x}}\!+\!\boldsymbol{b}_0)\!+\!
	\boldsymbol{P}(\accentset{\ast}{\boldsymbol{\gamma}},\accentset{\ast}{\boldsymbol{\mu}})\accentset{\ast}{\boldsymbol{x}}+\!
	\sum_{k\in\mathcal{I}}\accentset{\ast}{\gamma}_{k} \boldsymbol{b}_k\!+\!\!
	\sum_{k\in\mathcal{E}}\accentset{\ast}{\mu}_{k} \boldsymbol{b}_k\!\nonumber\\
	&=(\eta\boldsymbol{I}\!+\!\boldsymbol{A}_0)(\accentset{\ast}{\boldsymbol{x}}\!-\!\hat{\boldsymbol{x}})
	\!+\!(\boldsymbol{A}_0\hat{\boldsymbol{x}}\!+\!\boldsymbol{b}_0)\!+\!
	\mathcal{J}(\accentset{\ast}{\boldsymbol{x}})^{\!\top}[\accentset{\ast}{\boldsymbol{\gamma}}^{\!\top},\accentset{\ast}{\boldsymbol{\mu}}^{\!\top}]^{\!\top}/2.\!\!\!
	\end{align}
	Moreover, \eqref{KKT2} is concluded from the complementary slackness.
\end{proof}

{
The next lemma bounds the Lagrange multipliers whose existence is proven previously. This bound is helpful to prove that $\accentset{\ast}{\boldsymbol{X}}=\accentset{\ast}{\boldsymbol{x}}\accentset{\ast}{\boldsymbol{x}}^{\top}$. 
}

\begin{lemma}\label{lemma4}
	Consider an arbitrary $\varepsilon>0$ and suppose $\hat{\boldsymbol{x}}\in\mathbb{R}^n$ satisfies the inequality
	\begin{align}\label{lemma4ieq2}
	s(\hat{\boldsymbol{x}})>2 d_{\mathcal{F}}(\hat{\boldsymbol{x}})\|\boldsymbol{P}\|_2.
	\end{align}
	If $\eta$ is sufficiently large, for every optimal solution $\accentset{\ast}{\boldsymbol{x}}$ of the problem \eqref{prob_pen_obj}--\eqref{prob_pen_eq}, there exists a pair of dual vectors $(\accentset{\ast}{\boldsymbol{\gamma}},\accentset{\ast}{\boldsymbol{\mu}})\in\mathbb{R}^{|\mathcal{I}|}_{+}\times\mathbb{R}^{|\mathcal{E}|}$ that satisfies the inequality
	\begin{align}\label{lemma4ieq}
	\frac{1}{\eta}\sqrt{\|\accentset{\ast}{\boldsymbol{\gamma}}\|^2_2+\|\accentset{\ast}{\boldsymbol{\mu}}\|^2_2} \leq 
	\frac{2 d_{\mathcal{F}}(\hat{\boldsymbol{x}})}
	{ s(\hat{\boldsymbol{x}}) - 2 d_{\mathcal{F}}(\hat{\boldsymbol{x}})\|\boldsymbol{P}\|_2 }+\varepsilon
	\end{align}
	as well as the equations \eqref{KKT1} and \eqref{KKT2}.
\end{lemma}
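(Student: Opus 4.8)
The plan is to first certify that the minimizer $\accentset{\ast}{\boldsymbol{x}}$ is itself LICQ regular, then invoke Lemma~\ref{lemma3} to produce the multipliers, and finally extract the quantitative bound \eqref{lemma4ieq} directly from the stationarity equation \eqref{KKT1}. For the regularity step, note that hypothesis \eqref{lemma4ieq2} gives $s(\hat{\boldsymbol{x}})-2d_{\mathcal{F}}(\hat{\boldsymbol{x}})\|\boldsymbol{P}\|_2>0$; combining this with Lemma~\ref{lemma2}, which yields $s(\accentset{\ast}{\boldsymbol{x}})\geq s(\hat{\boldsymbol{x}})-2d_{\mathcal{F}}(\hat{\boldsymbol{x}})\|\boldsymbol{P}\|_2-\varepsilon'$ for large $\eta$, I can choose $\varepsilon'$ small enough that $s(\accentset{\ast}{\boldsymbol{x}})>0$. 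Hence $\mathcal{J}_{\accentset{\ast}{\mathcal{B}}}(\accentset{\ast}{\boldsymbol{x}})$ has linearly independent rows, $\accentset{\ast}{\boldsymbol{x}}$ is LICQ regular, and Lemma~\ref{lemma3} supplies $(\accentset{\ast}{\boldsymbol{\gamma}},\accentset{\ast}{\boldsymbol{\mu}})$ satisfying \eqref{KKT1}--\eqref{KKT2}.

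Next I would exploit complementary slackness. By \eqref{KKT2}, every inequality multiplier attached to a non-binding constraint vanishes, so the stacked vector $[\accentset{\ast}{\boldsymbol{\gamma}}^{\top},\accentset{\ast}{\boldsymbol{\mu}}^{\top}]^{\top}$ is supported on the binding set $\accentset{\ast}{\mathcal{B}}$ and $\mathcal{J}(\accentset{\ast}{\boldsymbol{x}})^{\top}[\accentset{\ast}{\boldsymbol{\gamma}}^{\top},\accentset{\ast}{\boldsymbol{\mu}}^{\top}]^{\top}=\mathcal{J}_{\accentset{\ast}{\mathcal{B}}}(\accentset{\ast}{\boldsymbol{x}})^{\top}\boldsymbol{\lambda}$, where $\boldsymbol{\lambda}$ is the restriction of the multipliers to $\accentset{\ast}{\mathcal{B}}$ and $\|\boldsymbol{\lambda}\|_2=\sqrt{\|\accentset{\ast}{\boldsymbol{\gamma}}\|_2^2+\|\accentset{\ast}{\boldsymbol{\mu}}\|_2^2}$. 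Since $\mathcal{J}_{\accentset{\ast}{\mathcal{B}}}(\accentset{\ast}{\boldsymbol{x}})^{\top}$ has full column rank with smallest singular value $s(\accentset{\ast}{\boldsymbol{x}})$, I get $\|\mathcal{J}_{\accentset{\ast}{\mathcal{B}}}(\accentset{\ast}{\boldsymbol{x}})^{\top}\boldsymbol{\lambda}\|_2\geq s(\accentset{\ast}{\boldsymbol{x}})\|\boldsymbol{\lambda}\|_2$. Solving \eqref{KKT1} for this term and taking norms, using $\|\eta\boldsymbol{I}+\boldsymbol{A}_0\|_2\leq\eta+\|\boldsymbol{A}_0\|_2$ and the triangle inequality, then yields
\begin{align}
s(\accentset{\ast}{\boldsymbol{x}})\sqrt{\|\accentset{\ast}{\boldsymbol{\gamma}}\|_2^2+\|\accentset{\ast}{\boldsymbol{\mu}}\|_2^2}\leq 2(\eta+\|\boldsymbol{A}_0\|_2)\|\accentset{\ast}{\boldsymbol{x}}-\hat{\boldsymbol{x}}\|_2+2\|\boldsymbol{A}_0\hat{\boldsymbol{x}}+\boldsymbol{b}_0\|_2. \nonumber
\end{align}

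Finally I would divide by $\eta\,s(\accentset{\ast}{\boldsymbol{x}})$ and pass to the large-$\eta$ regime. Lemma~\ref{lemma1} makes $\|\accentset{\ast}{\boldsymbol{x}}-\hat{\boldsymbol{x}}\|_2\leq d_{\mathcal{F}}(\hat{\boldsymbol{x}})+\varepsilon_1$; dividing by $\eta$ sends the factor $(\eta+\|\boldsymbol{A}_0\|_2)/\eta$ to $1$ and kills the constant term $2\|\boldsymbol{A}_0\hat{\boldsymbol{x}}+\boldsymbol{b}_0\|_2/\eta$. Together with the lower bound on $s(\accentset{\ast}{\boldsymbol{x}})$ from Lemma~\ref{lemma2}, the right-hand side converges to $2d_{\mathcal{F}}(\hat{\boldsymbol{x}})/[s(\hat{\boldsymbol{x}})-2d_{\mathcal{F}}(\hat{\boldsymbol{x}})\|\boldsymbol{P}\|_2]$, so for $\eta$ large enough the residual slack is absorbed into the additive $\varepsilon$ in \eqref{lemma4ieq}.

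The main obstacle is the $\varepsilon$-bookkeeping: the bounds of Lemmas~\ref{lemma1} and~\ref{lemma2} hold only asymptotically in $\eta$ and introduce their own auxiliary slacks $\varepsilon_1,\varepsilon'$, while the denominator $s(\accentset{\ast}{\boldsymbol{x}})$ itself depends on $\eta$ through $\accentset{\ast}{\boldsymbol{x}}$. The crucial point enabling the argument is hypothesis \eqref{lemma4ieq2}, which keeps $s(\hat{\boldsymbol{x}})-2d_{\mathcal{F}}(\hat{\boldsymbol{x}})\|\boldsymbol{P}\|_2$ strictly positive and hence the denominator uniformly bounded away from zero; one must fix $\varepsilon_1,\varepsilon'$ first (small relative to the target $\varepsilon$) and only then take $\eta$ large, so that all three limiting effects combine into a single controllable error.
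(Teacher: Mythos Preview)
Your proposal is correct and follows essentially the same route as the paper's proof: restrict the stationarity equation \eqref{KKT1} to the binding set via complementary slackness, lower-bound $\|\mathcal{J}_{\accentset{\ast}{\mathcal{B}}}(\accentset{\ast}{\boldsymbol{x}})^{\top}\boldsymbol{\lambda}\|_2$ by $s(\accentset{\ast}{\boldsymbol{x}})\|\boldsymbol{\lambda}\|_2$, upper-bound the right-hand side using $\|\accentset{\ast}{\boldsymbol{x}}-\hat{\boldsymbol{x}}\|_2\leq d_{\mathcal{F}}(\hat{\boldsymbol{x}})+\varepsilon_1$ from Lemma~\ref{lemma1}, and control the denominator $s(\accentset{\ast}{\boldsymbol{x}})$ via Lemma~\ref{lemma2}. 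If anything, your write-up is slightly more careful than the paper's in explicitly verifying that $s(\accentset{\ast}{\boldsymbol{x}})>0$ (hence LICQ at $\accentset{\ast}{\boldsymbol{x}}$) before invoking Lemma~\ref{lemma3}; the paper packages the $\varepsilon$-bookkeeping into a single explicit choice of $\varepsilon_1$ so that the chain of inequalities terminates in an exact equality, whereas you argue the equivalent asymptotic statement.
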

\begin{proof}
	Due to Lemma \ref{lemma3}, there exists $(\accentset{\ast}{\boldsymbol{\gamma}},\accentset{\ast}{\boldsymbol{\mu}})\in\mathbb{R}^{|\mathcal{I}|}_{+}\times\mathbb{R}^{|\mathcal{E}|}$ that satisfies the equations \eqref{KKT1} and \eqref{KKT2}. Let
	$\boldsymbol{\tau}\triangleq
	[\accentset{\ast}{\boldsymbol{\gamma}}^{\top},\accentset{\ast}{\boldsymbol{\mu}}^{\top}]^{\top}$ and let $\accentset{\ast}{\mathcal{B}}$ be the set of binding constraints for $\accentset{\ast}{\boldsymbol{x}}$. Due to equations \eqref{KKT1} and \eqref{KKT2}, one can write
	\begin{align}
	\!\!\!\!2(\eta\boldsymbol{I}+\boldsymbol{A}_0)(\accentset{\ast}{\boldsymbol{x}}-\hat{\boldsymbol{x}})
	+2(\boldsymbol{A}_0\hat{\boldsymbol{x}}+\boldsymbol{b}_0)+\mathcal{J}_{\accentset{\ast}{\mathcal{B}}}(\accentset{\ast}{\boldsymbol{x}})^{\!\top}
	\boldsymbol{\tau}\{\accentset{\ast}{\mathcal{B}}\}&=0.\!\!\!
	\end{align}
	Let
	$\phi \triangleq s(\hat{\boldsymbol{x}}) - 2 d_{\mathcal{F}}(\hat{\boldsymbol{x}})\|\boldsymbol{P}\|_2$
	and define
	\begin{align}
	\varepsilon_1 \triangleq \phi\times \frac{\varepsilon-2\eta^{-1}{\phi}^{-1}(\|\boldsymbol{A}_0\hat{\boldsymbol{x}}\!+\!\boldsymbol{b}_0\|_2+d_{\mathcal{F}}(\hat{\boldsymbol{x}})\|\boldsymbol{A}_0\|_2)}
	{\varepsilon+2+2\eta^{-1}\|\boldsymbol{A}_0\|_2+2{\phi}^{-1}d_{\mathcal{F}}(\hat{\boldsymbol{x}})}\cdot \label{ep1}
	\end{align}
	If $\eta$ is sufficiently large, $\varepsilon_1$ is positive and 
	based on Lemmas \ref{lemma1} and \ref{lemma2}, we have
	\begin{align}
	&\frac{\|\boldsymbol{\tau}\|_2}{\eta}=
	\frac{\|\boldsymbol{\tau}\{\accentset{\ast}{\mathcal{B}}\}\|_2}{\eta}
	\leq
	\frac{2\|(\eta\boldsymbol{I}+\boldsymbol{A}_0)(\accentset{\ast}{\boldsymbol{x}}-\hat{\boldsymbol{x}})
		+(\boldsymbol{A}_0\hat{\boldsymbol{x}}+\boldsymbol{b}_0)\|_2}
	{\eta\sigma_{\min}\{\mathcal{J}_{\accentset{\ast}{\mathcal{B}}}(\accentset{\ast}{\boldsymbol{x}})\}}\nonumber\\
	&\leq
	\frac{2\eta\|\accentset{\ast}{\boldsymbol{x}}-\hat{\boldsymbol{x}}\|_2
		+2\|\boldsymbol{A}_0\|_2\|\accentset{\ast}{\boldsymbol{x}}-\hat{\boldsymbol{x}}\|_2
		+2\|\boldsymbol{A}_0\hat{\boldsymbol{x}}+\boldsymbol{b}_0\|_2}
	{\eta s(\accentset{\ast}{\boldsymbol{x}})}\nonumber\\
	&\leq
	\frac{2(d_{\mathcal{F}}(\hat{\boldsymbol{x}})\!+\!\varepsilon_1)
		\!+\!2\eta^{-1}[\|\boldsymbol{A}_0\|_2(d_{\mathcal{F}}(\hat{\boldsymbol{x}})\!+\!\varepsilon_1)
		\!+\!\|\boldsymbol{A}_0\hat{\boldsymbol{x}}\!+\!\boldsymbol{b}_0\|_2]}
	{s(\hat{\boldsymbol{x}}) - 2 d_{\mathcal{F}}(\hat{\boldsymbol{x}})\|\boldsymbol{P}\|_2-\varepsilon_1 }\nonumber\\
	&=
	\frac{2 d_{\mathcal{F}}(\hat{\boldsymbol{x}})}
	{ s(\hat{\boldsymbol{x}}) - 2 d_{\mathcal{F}}(\hat{\boldsymbol{x}})\|\boldsymbol{P}\|_2 }+\varepsilon,
	\end{align}
	where the last equality is a result of the equation \eqref{ep1}.
\end{proof}

{
The next two lemmas provide sufficient conditions for $\accentset{\ast}{\boldsymbol{X}}=\accentset{\ast}{\boldsymbol{x}}\accentset{\ast}{\boldsymbol{x}}^{\top}$
with respect to Lagrange multipliers that will be used later to prove Theorems \ref{thm1} and \ref{thm2}. 
}

\begin{lemma}\label{lemma5}
	Consider an optimal solution $\accentset{\ast}{\boldsymbol{x}}$ of the problem \eqref{prob_pen_obj}--\eqref{prob_pen_eq}, and a pair of dual vectors $(\accentset{\ast}{\boldsymbol{\gamma}},\accentset{\ast}{\boldsymbol{\mu}})\in\mathbb{R}^{|\mathcal{I}|}_{+}\times\mathbb{R}^{|\mathcal{E}|}$ that satisfies the conditions \eqref{KKT1} and \eqref{KKT2}. If the matrix inequality
	\begin{align}\label{dual_conic}
	\eta\boldsymbol{I}+\boldsymbol{A}_0+
	\boldsymbol{P}(\accentset{\ast}{\boldsymbol{\gamma}},\accentset{\ast}{\boldsymbol{\mu}})\succ_{\mathcal{D}_r} 0,
	\end{align}
	holds true, then the pair $(\accentset{\ast}{\boldsymbol{x}},\accentset{\ast}{\boldsymbol{x}}\accentset{\ast}{\boldsymbol{x}}^{\top})$ is the unique primal solution to the penalized {SDP} \eqref{prob_lifted_pen_obj}--\eqref{prob_lifted_pen_conic}.
\end{lemma}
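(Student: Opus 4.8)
The plan is to certify optimality of $(\accentset{\ast}{\boldsymbol{x}},\accentset{\ast}{\boldsymbol{x}}\accentset{\ast}{\boldsymbol{x}}^{\top})$ for the convex program \eqref{prob_lifted_pen_obj}--\eqref{prob_lifted_pen_conic} by a weak-duality (Lagrangian lower-bound) argument driven entirely by the multipliers $(\accentset{\ast}{\boldsymbol{\gamma}},\accentset{\ast}{\boldsymbol{\mu}})$ supplied by Lemma \ref{lemma3}. First I would rewrite the penalized objective \eqref{prob_lifted_pen_obj} as a linear function of $(\boldsymbol{x},\boldsymbol{X})$, namely $\langle\boldsymbol{A}_0+\eta\boldsymbol{I},\boldsymbol{X}\rangle+2(\boldsymbol{b}_0-\eta\hat{\boldsymbol{x}})^{\top}\boldsymbol{x}+c_0+\eta\hat{\boldsymbol{x}}^{\top}\hat{\boldsymbol{x}}$, and set $\boldsymbol{M}\triangleq\eta\boldsymbol{I}+\boldsymbol{A}_0+\boldsymbol{P}(\accentset{\ast}{\boldsymbol{\gamma}},\accentset{\ast}{\boldsymbol{\mu}})$. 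The hypothesis \eqref{dual_conic} places $\boldsymbol{M}$ in the interior of the dual cone $\mathcal{D}_r=\mathcal{C}_r^{\ast}$; since $\mathbb{S}_n^{+}\subseteq\mathcal{C}_r$ we have $\mathcal{C}_r^{\ast}\subseteq\mathbb{S}_n^{+}$, and a full-dimensional neighborhood of $\boldsymbol{M}$ lying inside $\mathbb{S}_n^{+}$ forces $\boldsymbol{M}\succ 0$. This positive definiteness is the engine of the whole argument.

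Next, for any feasible $(\boldsymbol{x},\boldsymbol{X})$ I would form the Lagrangian lower bound by adding $\sum_{k\in\mathcal{I}}\accentset{\ast}{\gamma}_k\bar{q}_k+\sum_{k\in\mathcal{E}}\accentset{\ast}{\mu}_k\bar{q}_k\le 0$ to the objective; collecting the coefficients of $\boldsymbol{X}$ turns its dependence into $\langle\boldsymbol{M},\boldsymbol{X}\rangle$, yielding $L(\boldsymbol{x},\boldsymbol{X})=\langle\boldsymbol{M},\boldsymbol{X}\rangle+2\boldsymbol{g}^{\top}\boldsymbol{x}+\text{const}$ with $\boldsymbol{g}=\boldsymbol{b}_0-\eta\hat{\boldsymbol{x}}+\sum_{k\in\mathcal{I}}\accentset{\ast}{\gamma}_k\boldsymbol{b}_k+\sum_{k\in\mathcal{E}}\accentset{\ast}{\mu}_k\boldsymbol{b}_k$. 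Because $\boldsymbol{X}-\boldsymbol{x}\boldsymbol{x}^{\top}\in\mathcal{C}_r$ and $\boldsymbol{M}\in\mathcal{C}_r^{\ast}$, the dual-cone inequality $\langle\boldsymbol{M},\boldsymbol{X}-\boldsymbol{x}\boldsymbol{x}^{\top}\rangle\ge 0$ lets me replace $\langle\boldsymbol{M},\boldsymbol{X}\rangle$ by $\boldsymbol{x}^{\top}\boldsymbol{M}\boldsymbol{x}$, producing the strictly convex quadratic lower bound $h(\boldsymbol{x})\triangleq\boldsymbol{x}^{\top}\boldsymbol{M}\boldsymbol{x}+2\boldsymbol{g}^{\top}\boldsymbol{x}+\text{const}$. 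I would then read off from the stationarity identity \eqref{KKT1}, in the rearranged form $\boldsymbol{M}\accentset{\ast}{\boldsymbol{x}}+\boldsymbol{g}=\boldsymbol{0}$ derived inside the proof of Lemma \ref{lemma3}, that $\accentset{\ast}{\boldsymbol{x}}=-\boldsymbol{M}^{-1}\boldsymbol{g}$ is precisely the unique unconstrained minimizer of $h$.

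It then remains to show the bound is attained at $(\accentset{\ast}{\boldsymbol{x}},\accentset{\ast}{\boldsymbol{x}}\accentset{\ast}{\boldsymbol{x}}^{\top})$ and that this solution is unique. At this candidate point each $\bar{q}_k$ reduces to $q_k(\accentset{\ast}{\boldsymbol{x}})$, which vanishes for $k\in\mathcal{E}$ and is annihilated against $\accentset{\ast}{\gamma}_k$ by complementary slackness \eqref{KKT2}, while $\boldsymbol{X}-\boldsymbol{x}\boldsymbol{x}^{\top}=\boldsymbol{0}$ makes the dual-cone inequality tight; hence every inequality in the chain $f\ge L\ge h\ge h(\accentset{\ast}{\boldsymbol{x}})$ collapses to equality, certifying optimality. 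For uniqueness I would take any optimal $(\boldsymbol{x}',\boldsymbol{X}')$, force the chain to be tight there, deduce $\boldsymbol{x}'=\accentset{\ast}{\boldsymbol{x}}$ from strict convexity of $h$ (as $\boldsymbol{M}\succ 0$), and then obtain $\boldsymbol{X}'=\boldsymbol{x}'\boldsymbol{x}'^{\top}$ from the standard fact that an interior point of $\mathcal{C}_r^{\ast}$ has strictly positive inner product with every nonzero element of $\mathcal{C}_r$, so $\langle\boldsymbol{M},\boldsymbol{X}'-\boldsymbol{x}'\boldsymbol{x}'^{\top}\rangle=0$ forces $\boldsymbol{X}'-\boldsymbol{x}'\boldsymbol{x}'^{\top}=\boldsymbol{0}$.

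The algebraic rewriting and stationarity bookkeeping are routine. The two steps I view as the crux are cone-theoretic: first, that \eqref{dual_conic} genuinely upgrades to $\boldsymbol{M}\succ 0$, which needs $\mathcal{C}_r^{\ast}\subseteq\mathbb{S}_n^{+}$ and is where the structure of the relaxation enters; and second, the strict-complementarity argument certifying $\boldsymbol{X}=\boldsymbol{x}\boldsymbol{x}^{\top}$ rather than merely the weaker $\boldsymbol{X}\succeq_{\mathcal{C}_r}\boldsymbol{x}\boldsymbol{x}^{\top}$. Establishing the latter rigorously for the general factor-width cone, rather than only for the $r=n$ positive-semidefinite case, is the main obstacle.
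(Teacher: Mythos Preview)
Your argument is correct and is essentially the same convex-duality certificate the paper uses: the paper names your matrix $\boldsymbol{M}$ as the conic dual variable $\accentset{\ast}{\boldsymbol{\Lambda}}=\eta\boldsymbol{I}+\boldsymbol{A}_0+\boldsymbol{P}(\accentset{\ast}{\boldsymbol{\gamma}},\accentset{\ast}{\boldsymbol{\mu}})$ and verifies the KKT system \eqref{KKT_1}--\eqref{KKT_4} of the lifted problem directly, whereas you build the identical Lagrangian and argue via weak duality; the stationarity identity $\boldsymbol{M}\accentset{\ast}{\boldsymbol{x}}+\boldsymbol{g}=\boldsymbol{0}$ you extract is exactly the paper's \eqref{KKT_1}, and your strict-complementarity step is its \eqref{KKT_4}. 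The two cone-theoretic points you flag as obstacles are not: for any proper cone $\mathcal{K}$, an interior point $\boldsymbol{M}$ of $\mathcal{K}^{\ast}$ satisfies $\langle\boldsymbol{M},\boldsymbol{C}\rangle>0$ for every nonzero $\boldsymbol{C}\in\mathcal{K}$ (otherwise a small perturbation of $\boldsymbol{M}$ in the direction $-\boldsymbol{C}$ would leave $\mathcal{K}^{\ast}$), which simultaneously yields $\boldsymbol{M}\succ 0$ (take $\boldsymbol{C}=\boldsymbol{v}\boldsymbol{v}^{\top}\in\mathbb{S}_n^{+}\subseteq\mathcal{C}_r$) and forces $\boldsymbol{X}'-\boldsymbol{x}'\boldsymbol{x}'^{\top}=\boldsymbol{0}$ from $\langle\boldsymbol{M},\boldsymbol{X}'-\boldsymbol{x}'\boldsymbol{x}'^{\top}\rangle=0$.
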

\begin{proof}
	Let $\boldsymbol{\Lambda}\in\mathbb{S}^+_n$ denotes the dual variable associated with the conic constraint \eqref{prob_lifted_pen_conic}. Then, the KKT conditions for the problem \eqref{prob_lifted_pen_obj}-\eqref{prob_lifted_pen_conic} can be written as follows:
	\begin{subequations}
		\begin{align}
		\!\!\!\nabla_{\!\boldsymbol{x}\phantom{\boldsymbol{X}}\!\!\!\!}\,\bar{\mathcal{L}}(\boldsymbol{x},\boldsymbol{X},
		\boldsymbol{\gamma},\boldsymbol{\mu},\boldsymbol{\Lambda})&=
		2\left(\boldsymbol{\Lambda}\boldsymbol{\boldsymbol{x}}-\eta\hat{\boldsymbol{x}}+\boldsymbol{b}_0+\!
		\sum_{k\in\mathcal{I}}\accentset{\ast}{\gamma}_{k} \boldsymbol{b}_k\!+\!\!
		\sum_{k\in\mathcal{E}}\accentset{\ast}{\mu}_{k} \boldsymbol{b}_k\!\right)=0,\label{KKT_1}\\
		\!\!\!\nabla_{\!\boldsymbol{X}\phantom{\boldsymbol{x}}\!\!\!\!}\,\bar{\mathcal{L}}(\boldsymbol{x},\boldsymbol{X},
		\boldsymbol{\gamma},\boldsymbol{\mu},\boldsymbol{\Lambda})&=
		\eta\boldsymbol{I}+\boldsymbol{A}_0+\boldsymbol{P}({\boldsymbol{\gamma}},{\boldsymbol{\mu}})-\boldsymbol{\Lambda}=0,\label{KKT_2}\\
		\!\!\!\gamma_k q_k(\boldsymbol{\boldsymbol{x}})&=0, \qquad\forall k\in\mathcal{I}\label{KKT_3}\\
		\langle\boldsymbol{\Lambda},\,\boldsymbol{x}\boldsymbol{x}^{\top}\!-\!\boldsymbol{X}\rangle&=0, \label{KKT_4}
		\end{align}
	\end{subequations}
	where 
	$\bar{\mathcal{L}}:\mathbb{R}^n\times\mathbb{S}_n
	\times\mathbb{R}^{|\mathcal{I}|}\times\mathbb{R}^{|\mathcal{E}|}\times\mathbb{S}_n\to\mathbb{R}$ 
	is the Lagrangian function, equations \eqref{KKT_1} and \eqref{KKT_2} account for stationarity with respect to $\boldsymbol{x}$ and $\boldsymbol{X}$, respectively, and equations \eqref{KKT_3} and \eqref{KKT_4} are the complementary slackness conditions for the constraints \eqref{prob_lifted_pen_ineq} and \eqref{prob_lifted_pen_conic}, respectively. Define
	\begin{align}
	\accentset{\ast}{\boldsymbol{\Lambda}}\triangleq\eta\boldsymbol{I}+\boldsymbol{A}_0+
	\boldsymbol{P}(\accentset{\ast}{\boldsymbol{\gamma}},\accentset{\ast}{\boldsymbol{\mu}}).
	\end{align}
	Due to Lemma \ref{lemma3}, if $\eta$ is sufficiently large, $\accentset{\ast}{\boldsymbol{x}}$ and $(\accentset{\ast}{\boldsymbol{\gamma}},\accentset{\ast}{\boldsymbol{\mu}})$ satisfy the equations \eqref{KKT1} and \eqref{KKT2}, which yield the optimality conditions \eqref{KKT_1}-\eqref{KKT_4}, if
	$\boldsymbol{x}=\accentset{\ast}{\boldsymbol{x}}$,
	$\boldsymbol{X}=\accentset{\ast}{\boldsymbol{x}}\accentset{\ast}{\boldsymbol{x}}^{\top}$,
	$\boldsymbol{\gamma}=\accentset{\ast}{\boldsymbol{\gamma}}$,
	$\boldsymbol{\mu}=\accentset{\ast}{\boldsymbol{\mu}}$, and $\boldsymbol{\Lambda}=\accentset{\ast}{\boldsymbol{\Lambda}}$.
	Therefore, the pair $(\accentset{\ast}{\boldsymbol{x}},\accentset{\ast}{\boldsymbol{x}}\accentset{\ast}{\boldsymbol{x}}^{\top})$ is a primal optimal points for the penalized {SDP} \eqref{prob_lifted_pen_obj}-\eqref{prob_lifted_pen_conic}. {\color{black}Note that due to positive semidefiniteness of $\accentset{\ast}{\boldsymbol{\Lambda}}$, the condition \eqref{KKT_4} implies $\boldsymbol{x}\boldsymbol{x}^{\top}\!-\!\boldsymbol{X}=0$.}
	
	Since the KKT conditions hold for every pair of primal and dual solutions, we have
	\begin{align}
	\accentset{\ast}{\boldsymbol{\boldsymbol{x}}}=\accentset{\ast}{\boldsymbol{\Lambda}}^{-1}\left(\eta\hat{\boldsymbol{x}}-\boldsymbol{b}_0-\!
	\sum_{k\in\mathcal{I}}\accentset{\ast}{\gamma}_{k} \boldsymbol{b}_k\!-\!\!
	\sum_{k\in\mathcal{E}}\accentset{\ast}{\mu}_{k} \boldsymbol{b}_k\right)
	\end{align}
	and $\accentset{\ast}{\boldsymbol{X}}=\accentset{\ast}{\boldsymbol{x}}\accentset{\ast}{\boldsymbol{x}}^{\top}$, according to the equations \eqref{KKT_1} and \eqref{KKT_4}, respectively, which implies the uniqueness of the solution.
\end{proof}

\begin{lemma}\label{lemma6}
	Consider an optimal solution $\accentset{\ast}{\boldsymbol{x}}$ of the problem \eqref{prob_pen_obj}-\eqref{prob_pen_eq}, and a pair of dual vectors $(\accentset{\ast}{\boldsymbol{\gamma}},\accentset{\ast}{\boldsymbol{\mu}})\in\mathbb{R}^{|\mathcal{I}|}_{+}\times\mathbb{R}^{|\mathcal{E}|}$ that satisfies the conditions \eqref{KKT1} and \eqref{KKT2}. If the inequality,
	\begin{align}\label{lemma6ieq}
	\frac{1}{\eta}\sqrt{\|\accentset{\ast}{\boldsymbol{\gamma}}\|^2_2+\|\accentset{\ast}{\boldsymbol{\mu}}\|^2_2}
	<{n-1 \choose r-1}^{-1}\frac{1}{\|\boldsymbol{P}\|_2}-\frac{\|\boldsymbol{A}_0\|_2}{\eta\|\boldsymbol{P}\|_2}
	\end{align}
	holds true, then the pair $(\accentset{\ast}{\boldsymbol{x}},\accentset{\ast}{\boldsymbol{x}}\accentset{\ast}{\boldsymbol{x}}^{\top})$ is the unique primal solution to the penalized {SDP} \eqref{prob_lifted_pen_obj}--\eqref{prob_lifted_pen_conic}.
\end{lemma}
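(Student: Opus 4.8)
The plan is to reduce the statement to Lemma~\ref{lemma5}. Since the optimal $\accentset{\ast}{\boldsymbol{x}}$ together with the dual pair $(\accentset{\ast}{\boldsymbol{\gamma}},\accentset{\ast}{\boldsymbol{\mu}})$ is assumed to satisfy the stationarity and complementary-slackness conditions \eqref{KKT1}--\eqref{KKT2}, the only thing Lemma~\ref{lemma5} still requires is the conic inequality \eqref{dual_conic}, namely that $\eta\boldsymbol{I}+\boldsymbol{A}_0+\boldsymbol{P}(\accentset{\ast}{\boldsymbol{\gamma}},\accentset{\ast}{\boldsymbol{\mu}})$ lies in the interior of $\mathcal{C}^{\ast}_r$. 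Hence the entire content of the lemma is the implication \eqref{lemma6ieq}~$\Rightarrow$~\eqref{dual_conic}; once this is in hand, Lemma~\ref{lemma5} directly delivers that $(\accentset{\ast}{\boldsymbol{x}},\accentset{\ast}{\boldsymbol{x}}\accentset{\ast}{\boldsymbol{x}}^{\top})$ is the unique primal optimum of \eqref{prob_lifted_pen_obj}--\eqref{prob_lifted_pen_conic}.

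The first step is a routine norm estimate that strips away the optimization context. Set $\boldsymbol{R}\triangleq\boldsymbol{A}_0+\boldsymbol{P}(\accentset{\ast}{\boldsymbol{\gamma}},\accentset{\ast}{\boldsymbol{\mu}})$. Multiplying \eqref{lemma6ieq} through by $\eta\|\boldsymbol{P}\|_2>0$ rearranges the hypothesis into $\|\boldsymbol{A}_0\|_2+\|\boldsymbol{P}\|_2\sqrt{\|\accentset{\ast}{\boldsymbol{\gamma}}\|^2_2+\|\accentset{\ast}{\boldsymbol{\mu}}\|^2_2}<{n-1\choose r-1}^{-1}\eta$. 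Combining the triangle inequality with the pencil bound $\|\boldsymbol{P}(\accentset{\ast}{\boldsymbol{\gamma}},\accentset{\ast}{\boldsymbol{\mu}})\|_2\le\|\boldsymbol{P}\|_2\sqrt{\|\accentset{\ast}{\boldsymbol{\gamma}}\|^2_2+\|\accentset{\ast}{\boldsymbol{\mu}}\|^2_2}$, which is immediate from Definition~\ref{def_pen}, this yields the clean spectral estimate $\|\boldsymbol{R}\|_2<{n-1\choose r-1}^{-1}\eta$.

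The heart of the argument is then the purely matrix-analytic claim: if $\|\boldsymbol{R}\|_2<{n-1\choose r-1}^{-1}\eta$, then $\eta\boldsymbol{I}+\boldsymbol{R}\succ_{\mathcal{C}^{\ast}_r}0$. I would prove this by constructing an explicit factor-width-$r$ decomposition anchored on the counting identity $\sum_{|S|=r}\boldsymbol{E}_S={n-1\choose r-1}\boldsymbol{I}$, where $\boldsymbol{E}_S$ is the diagonal projector onto the $r$-subset $S$ and each index belongs to exactly ${n-1\choose r-1}$ of the $r$-subsets. The goal is to write $\eta\boldsymbol{I}+\boldsymbol{R}=\sum_{|S|=r}\pi_S(\boldsymbol{B}_S)$, where $\pi_S$ embeds an $r\times r$ block into $\mathbb{R}^{n\times n}$, endowing every block with the positive-definite cushion $\tfrac{\eta}{{n-1\choose r-1}}\boldsymbol{I}_r$ plus a share of $\boldsymbol{R}$, and then verifying that each $\boldsymbol{B}_S\succeq0$ because its deviation from the cushion is controlled by $\|\boldsymbol{R}\|_2$; the strict inequality places the sum in the interior of $\mathcal{C}^{\ast}_r$. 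The case $r=2$ is instructive and guides the construction: there $\mathcal{C}^{\ast}_2$ is exactly the scaled diagonally dominant cone, the blocks are $2\times2$, and the threshold ${n-1\choose r-1}^{-1}=(n-1)^{-1}$ is precisely the point at which $\eta\boldsymbol{I}+\boldsymbol{R}$ becomes diagonally dominant (using the row-wise estimate $\sum_{j\neq i}|R_{ij}|\le\sqrt{(n-1)(\|\boldsymbol{R}\|_2^2-R_{ii}^2)}$ rather than a crude $\ell_1$-$\ell_2$ bound). An equivalent route is by duality: since $\mathcal{C}_r=(\mathcal{C}^{\ast}_r)^{\ast}$, it suffices to show $\langle\eta\boldsymbol{I}+\boldsymbol{R},\boldsymbol{Z}\rangle>0$ for every nonzero $\boldsymbol{Z}\in\mathcal{C}_r$, which, using $\langle\boldsymbol{R},\boldsymbol{Z}\rangle\ge-\|\boldsymbol{R}\|_2\|\boldsymbol{Z}\|_{\mathrm{F}*}$ (nuclear norm) and the fact that $\mathrm{tr}\{\boldsymbol{Z}\}>0$ for nonzero $\boldsymbol{Z}\in\mathcal{C}_r$, reduces to the trace--nuclear inequality $\|\boldsymbol{Z}\|_{\mathrm{F}*}\le{n-1\choose r-1}\,\mathrm{tr}\{\boldsymbol{Z}\}$.

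The main obstacle, in either route, is recovering the sharp constant ${n-1\choose r-1}$. The difficulty is entirely combinatorial-spectral: a fixed diagonal entry is covered by ${n-1\choose r-1}$ of the $r$-subsets while an off-diagonal pair is covered by only ${n-2\choose r-2}$ of them, so a naive uniform distribution of $\boldsymbol{R}$ across the blocks, or a crude restriction-and-average bound in the dual pairing, degrades the threshold to something like $\tfrac{r-1}{2n-r-1}\eta$ and misses the stated bound for small $r$. The delicate step is therefore to distribute the mass of $\boldsymbol{R}$ non-uniformly among the $r\times r$ blocks (exploiting the per-block structure exactly as diagonal dominance does for $r=2$) so that each block remains positive semidefinite precisely up to $\|\boldsymbol{R}\|_2<{n-1\choose r-1}^{-1}\eta$; the remainder is bookkeeping and the appeal to Lemma~\ref{lemma5}.
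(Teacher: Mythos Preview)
Your plan coincides with the paper's: reduce to Lemma~\ref{lemma5}, bound $\|\boldsymbol{K}\|_2$ with $\boldsymbol{K}\triangleq\boldsymbol{A}_0+\boldsymbol{P}(\accentset{\ast}{\boldsymbol{\gamma}},\accentset{\ast}{\boldsymbol{\mu}})$ via the pencil norm, and then establish \eqref{dual_conic} by exhibiting a factor-width-$r$ decomposition of $\eta\boldsymbol{I}+\boldsymbol{K}$ over the $r$-subsets of $\{1,\dots,n\}$. At this last step the paper does precisely the uniform allocation you caution against: it sets $\boldsymbol{R}_{\mathcal{K}}=\binom{n-1}{r-1}^{-1}\bigl[\eta\boldsymbol{I}_r+\boldsymbol{K}\{\mathcal{K},\mathcal{K}\}\bigr]$, notes that each $\boldsymbol{R}_{\mathcal{K}}\succ 0$, and asserts the identity $\eta\boldsymbol{I}+\boldsymbol{K}=\sum_{\mathcal{K}}\boldsymbol{I}\{\mathcal{K}\}^{\!\top}\boldsymbol{R}_{\mathcal{K}}\,\boldsymbol{I}\{\mathcal{K}\}$. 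Your counting observation applies verbatim here: a diagonal entry is covered by $\binom{n-1}{r-1}$ of the $r$-subsets while an off-diagonal pair is covered by only $\binom{n-2}{r-2}$, so the right-hand side of the paper's displayed identity actually equals $\eta\boldsymbol{I}+\mathrm{diag}(\boldsymbol{K})+\tfrac{r-1}{n-1}\,\mathrm{offdiag}(\boldsymbol{K})$, not $\eta\boldsymbol{I}+\boldsymbol{K}$, whenever $r<n$. The paper therefore does not bypass the obstacle you flag; it overlooks it.

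That said, your sketch does not close the gap either. The $r=2$ case via the refined row-sum estimate $\sum_{j\ne i}|K_{ij}|\le\sqrt{(n-1)(\|\boldsymbol{K}\|_2^2-K_{ii}^2)}$ does deliver the constant $n-1$, but for general $r$ you only \emph{assert} that some non-uniform distribution of $\boldsymbol{K}$ among the blocks, or equivalently the dual trace--nuclear inequality $\|\boldsymbol{Z}\|_{*}\le\binom{n-1}{r-1}\,\mathrm{tr}\{\boldsymbol{Z}\}$ on $\mathcal{C}_r$, recovers the threshold $\binom{n-1}{r-1}^{-1}\eta$; neither construction is specified, and neither claim is obvious. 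In summary: your plan is the paper's plan, you have correctly isolated the only nontrivial step, and you are more scrupulous than the paper about the combinatorial mismatch---but the ``delicate step'' you highlight remains open in both the paper's argument and your proposal.
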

\begin{proof}
	Based on Lemma \ref{lemma5}, it suffices to prove the conic inequality \eqref{dual_conic}. Define
	\begin{align}
	\boldsymbol{K}\triangleq\boldsymbol{A}_0+
	\boldsymbol{P}(\accentset{\ast}{\boldsymbol{\gamma}},\accentset{\ast}{\boldsymbol{\mu}}).
	\end{align}
	It follows that
	\begin{subequations}\label{Kbound}
		\begin{align}
		\|\boldsymbol{K}\|_2&\leq\|\boldsymbol{A}_0\|_2+
		\sum_{k\in\mathcal{I}}{\accentset{\ast}{\gamma}_k\|\boldsymbol{A}_k\|_2}+
		\sum_{k\in\mathcal{E}}{\accentset{\ast}{\mu}_k\|\boldsymbol{A}_k\|_2},\\
		&\leq\|\boldsymbol{A}_0\|_2+
		\|\boldsymbol{P}\|_2\sqrt{\|\accentset{\ast}{\boldsymbol{\gamma}}\|^2_2+\|\accentset{\ast}{\boldsymbol{\mu}}\|^2_2}\;.
		\end{align}
	\end{subequations}
	Let $\mathcal{R}$ be the set of all $r$-member subsets of $\{1,2,\ldots,n\}$. Hence,
	\begin{align}
	\eta\boldsymbol{I}+\boldsymbol{K}=\sum_{\mathcal{K}\in\mathcal{R}}
	{\boldsymbol{I}\{\mathcal{K}\}^{\top}\,\boldsymbol{R}_{\mathcal{K}}\,\boldsymbol{I}\{\mathcal{K}\}},
	\end{align}
	where
	\begin{align}
	\boldsymbol{R}_{\mathcal{K}}&={{n-1}\choose{r-1}}^{-1}
	[\eta\boldsymbol{I}\{\mathcal{K},\mathcal{K}\}+\boldsymbol{K}\{\mathcal{K},\mathcal{K}\}].
	\end{align}
	Due to the inequalities \eqref{lemma6ieq} and \eqref{Kbound}, we have $\boldsymbol{R}_{\mathcal{K}}\succ 0$ for every $\mathcal{K}\in\mathcal{R}$, which proves that $\eta\boldsymbol{I}+\boldsymbol{K}\succ_{\mathcal{D}_r}0$.
\end{proof}

\begin{proof}[Theorem \ref{thm2}]
	Let $\accentset{\ast}{\boldsymbol{x}}$ be an optimal solution of the problem \eqref{prob_pen_obj}--\eqref{prob_pen_eq}. 
	According to the assumption \eqref{thm2ineq}, the inequality \eqref{lemma4ieq2} holds true, and due to Lemma \ref{lemma4}, if $\eta$ is sufficiently large, there exists a corresponding pair of dual vectors $(\accentset{\ast}{\boldsymbol{\gamma}},\accentset{\ast}{\boldsymbol{\mu}})$ that satisfies the inequality \eqref{lemma4ieq}. Now, according to the inequality \eqref{thm2ineq}, we have
	\begin{align}
	\frac{2 d_{\mathcal{F}}(\hat{\boldsymbol{x}})}
	{ s(\hat{\boldsymbol{x}}) - 2 d_{\mathcal{F}}(\hat{\boldsymbol{x}})\|\boldsymbol{P}\|_2 } \leq
	\frac{1}{{n-1\choose r-1}\|\boldsymbol{P}\|_2}
	\end{align}
	and therefore \eqref{lemma4ieq} concludes \eqref{lemma6ieq}. Hence, according to Lemma \ref{lemma6}, the pair $(\accentset{\ast}{\boldsymbol{x}},\accentset{\ast}{\boldsymbol{x}}\accentset{\ast}{\boldsymbol{x}}^{\top})$ is the unique primal solution to the penalized {SDP} \eqref{prob_lifted_pen_obj}--\eqref{prob_lifted_pen_conic}.
\end{proof}

\begin{proof}[Theorem \ref{thm1}]
	If $\hat{\boldsymbol{x}}$ is feasible, then $d_{\mathcal{F}}(\hat{\boldsymbol{x}})=0$. Therefore, the tightness of the penalization for Theorem \ref{thm1} is a direct consequence of Theorem \ref{thm2}. Denote the unique optimal solution of the penalized {SDP} as $(\accentset{\ast}{\boldsymbol{x}},\accentset{\ast}{\boldsymbol{x}}\accentset{\ast}{\boldsymbol{x}}^{\top})$. Then it is straightforward to verify the inequality $q_0(\accentset{\ast}{\boldsymbol{x}})\leq q_0(\hat{\boldsymbol{x}})$ by evaluating the objective function \eqref{prob_lifted_pen_obj} at the point $(\hat{\boldsymbol{x}},\hat{\boldsymbol{x}}\hat{\boldsymbol{x}}^{\top})$. 
\end{proof}

\subsection{Sequential penalization procedure}
In practice, the penalized {SDP} \eqref{prob_lifted_pen_obj}--\eqref{prob_lifted_pen_conic} 
can be initialized by a point that may not satisfy the conditions of Theorem~\ref{thm1} or Theorem~\ref{thm2} as these conditions are only sufficient, but not necessary. If the chosen initial point $\hat{\boldsymbol{x}}$ does not result in a tight penalization, the penalized SDP\eqref{prob_lifted_pen_obj}--\eqref{prob_lifted_pen_conic} can be solved sequentially by updating the initial point until a feasible and near-optimal point is obtained. This heuristic procedure is described in Algorithm \ref{al:alg_1}.

\begin{algorithm}
	\caption{Sequential Penalized Conic Relaxation.}\label{alg:1}
	\begin{algorithmic}
		\STATE{initiate $\{q_k\}_{k\in\{0\}\cup\mathcal{I}\cup\mathcal{E}}$, $r\geq2$, $\hat{\boldsymbol{x}}\in\mathbb{R}^n$, and the fixed parameter $\eta>0$}
		\WHILE{stopping criterion is not met}
		\STATE{solve the problem \eqref{prob_lifted_pen_obj}--\eqref{prob_lifted_pen_conic} with the initial point $\hat{\boldsymbol{x}}$ to obtain $(\accentset{\ast}{\boldsymbol{x}},\accentset{\ast}{\boldsymbol{X}})$}
		\STATE{set $\hat{\boldsymbol{x}} \leftarrow \accentset{\ast}{\boldsymbol{x}}$}
		\ENDWHILE
		\RETURN $\accentset{\ast}{\boldsymbol{x}}$
	\end{algorithmic}\label{al:alg_1}
\end{algorithm}

According to Theorem \ref{thm2}, once $\hat{\boldsymbol{x}}$ is close enough to the feasible set $\mathcal{F}$, the penalization becomes tight, i.e., a feasible solution $\accentset{\ast}{\boldsymbol{x}}$ is recovered as the unique optimal solution to \eqref{prob_lifted_pen_obj}--\eqref{prob_lifted_pen_conic}. Afterwards, in the subsequent iterations, according to Theorem \ref{thm1}, feasibility is preserved and the objective value does not increase. {\color{black} Note that Theorems \ref{thm1} and \ref{thm2} do not guarantee the existence of a global $\eta$ that works for every member of the sequence generated by Algorithm \ref{al:alg_1}. For this reason, we regard this procedure as a heuristic.}

{
	The following example illustrates the 
	application of Algorithm \ref{al:alg_1} for a polynomial optimization. 
	\begin{example}
		Consider the following three-dimensional polynomial optimization:
		\begin{subequations}\begin{align}
			\underset{a,b,c\in\mathbb{R}}{\mathrm{minimize}} \ \ 
			& a  \label{poly_exmp_obj}\\
			\ \ \ \ \mathrm{s.t.} \ \ \ 
			& a^5-b^4-c^4+2a^3+2a^2 b-2a b^2+ 6abc-2=0 \label{poly_exmp_eq}
			\end{align}\end{subequations}
		To derive a QCQP reformulation of the problem \eqref{poly_exmp_obj}--\eqref{poly_exmp_eq}, we consider a variable $x\in\mathbb{R}^8$, whose elements account for the monomials $a$, $b$, $c$, $a^2$, $b^2$, $c^2$, $ab$, and $a^3$, respectively. This leads to the following QCQP: 
		\begin{subequations}\begin{align}
			\underset{\begin{subarray}{l} \boldsymbol{x}\in\mathbb{R}^8,
				\end{subarray}}{\mathrm{minimize}} \ \ &  x_1\  \label{qcqp_exmp_obj}\\
			\ \ \  \ \mathrm{s.t.} \ \ \ 
			&  x_4x_8 - x_5^2 - x_6^2 + 2x_1x_4 + 2 x_2 x_4 - 2 x_1 x_5 + 6 x_3 x_7 -2 =0 \label{qcqp_exmp_eq}\\
			&  x_4-x^2_1=0 \label{qcqp_exmp_aux1}\\
			&  x_5-x^2_2=0 \label{qcqp_exmp_aux2} \\
			&  x_6-x^3_3=0 \label{qcqp_exmp_aux3}\\
			&  x_7-x_1 x_2=0 \label{qcqp_exmp_aux4}\\
			&  x_8-x_1 x_4=0 \label{qcqp_exmp_aux5}
			\end{align}\end{subequations}
		The transformation of the polynomial optimization to QCQP is standard and it is described in Appendix \ref{apo} for completeness. The global optimal objective value of the above QCQP equals $-2.0198$ and the lower-bound, offered by the standard SDP relaxation equals $-89.8901$.
		In order to solve the above QCQP, we run Algorithm \ref{al:alg_1}, equipped with the SDP relaxation (no additional valid inequalities) and penalty term $\eta=0.025$. The trajectory with three different initializations $\hat{\boldsymbol{x}}^1=[0,0,0,0,0,0,0]^{\top}$,
		$\hat{\boldsymbol{x}}^2=[-3,0,2,9,0,4,0,27]^{\top}$, and
		$\hat{\boldsymbol{x}}^3=[0,4,0,0,16,0,0,0]^{\top}$ are given in Table \ref{table_traj} and shown in Fig. \ref{fig3}. In all three cases, the algorithm achieves feasibility in 1--8 rounds. Moreover, a feasible solution with less than $0.2\%$ gap from global optimality is attained within 10 rounds in all three cases. The example illustrates { a case for which the heuristic Algorithm} \ref{alg:1} is not sensitive to the choice of initial point.
		
\end{example}}

\begin{figure}[h!]
	\captionsetup[subfigure]{position=t}
	\centering
	\includegraphics[height=0.26\textwidth]{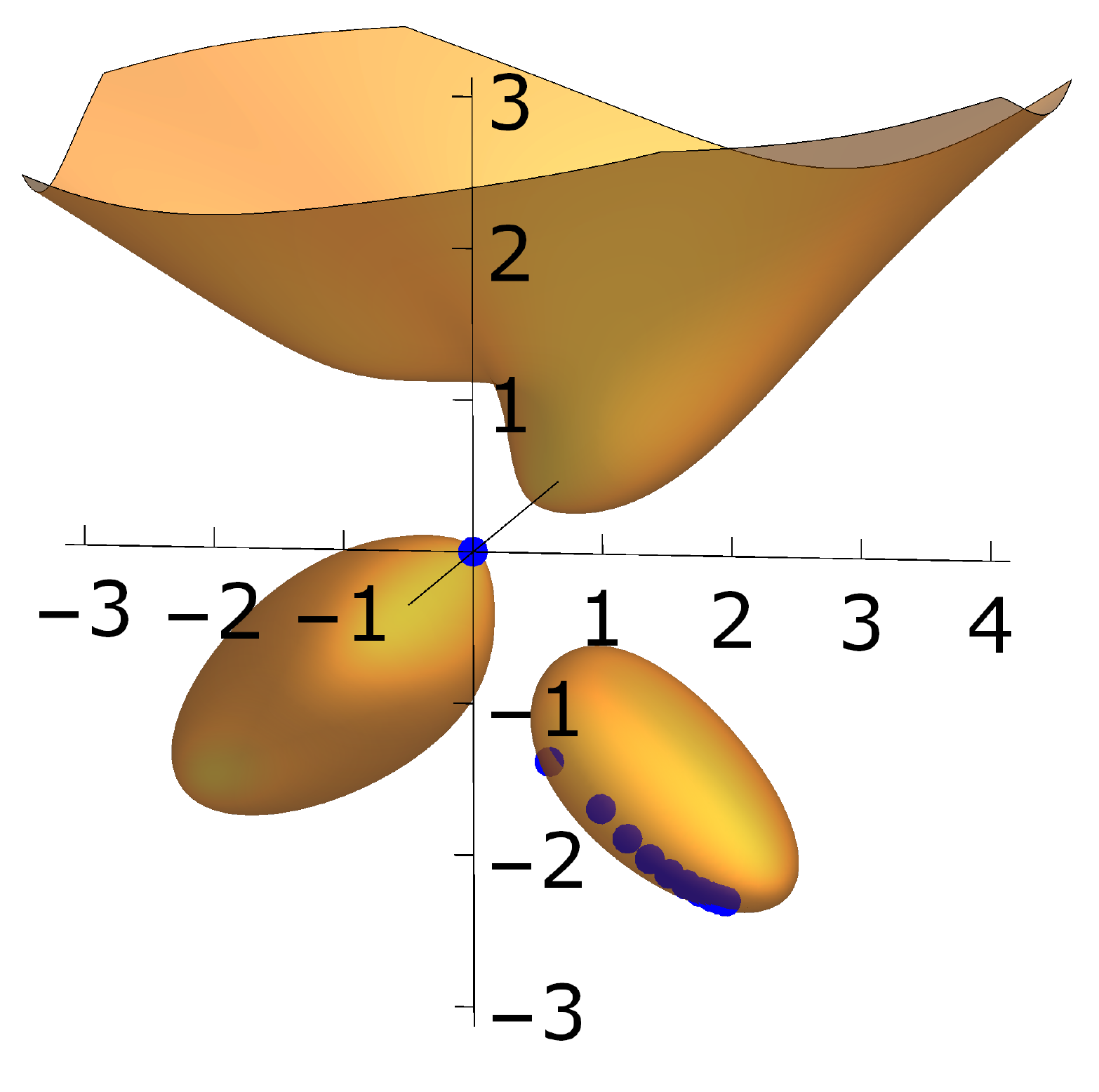}\hspace{5mm}
	\captionsetup[subfigure]{position=t}
	\includegraphics[height=0.26\textwidth]{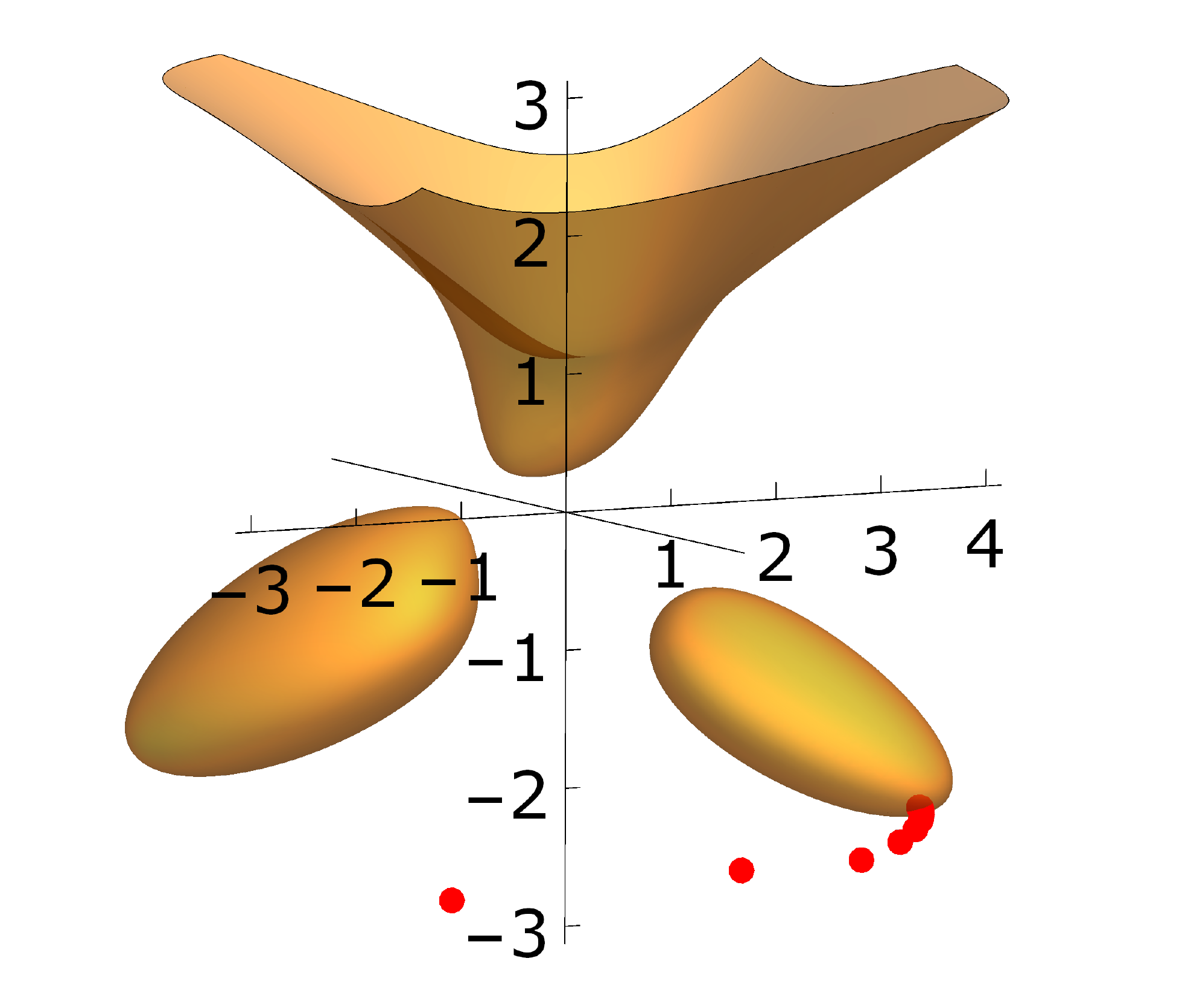}\hspace{5mm}
	\captionsetup[subfigure]{position=t}
	\includegraphics[height=0.26\textwidth]{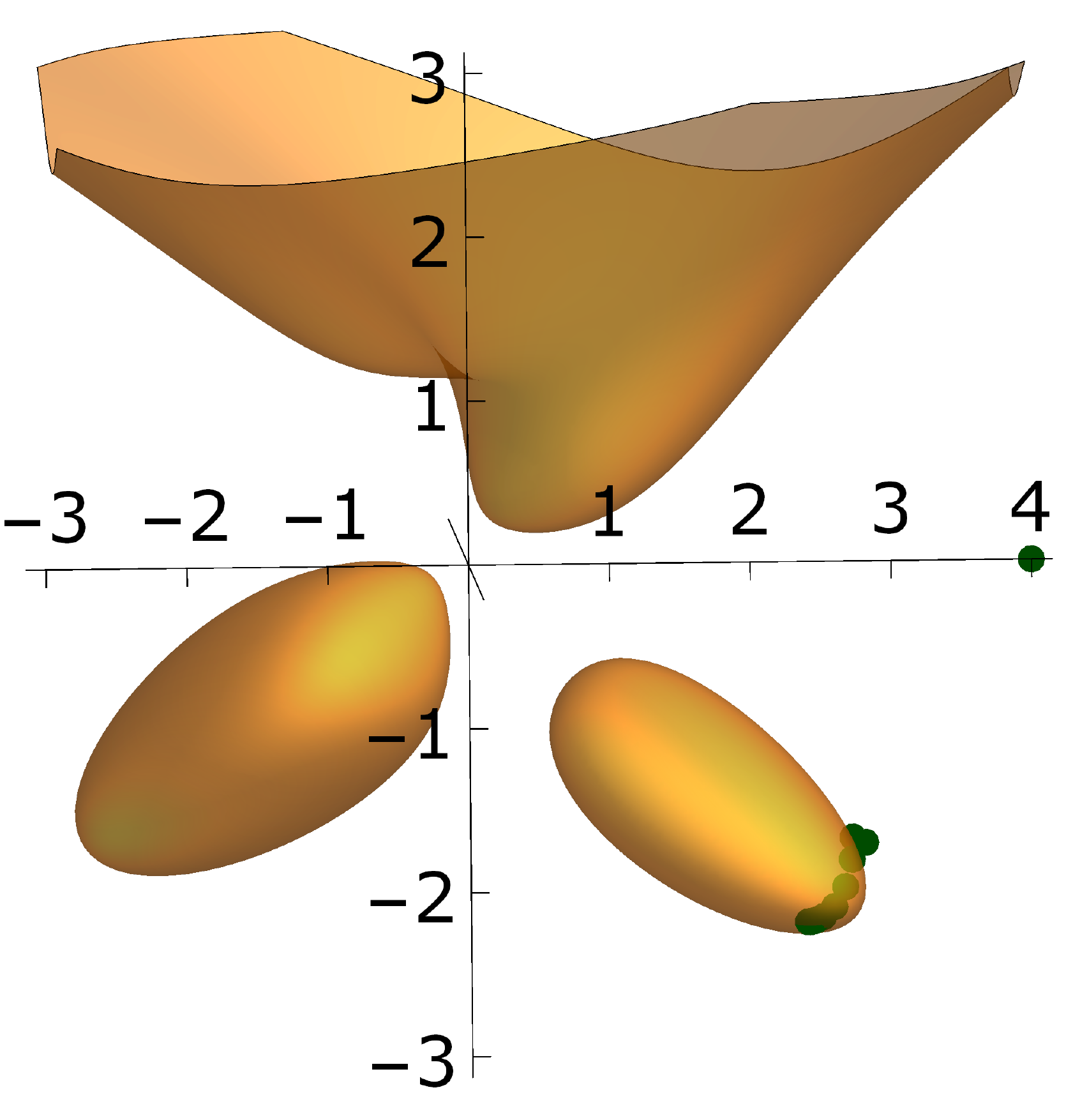}\\
	\captionsetup[subfigure]{position=t}
	\includegraphics[height=0.26\textwidth]{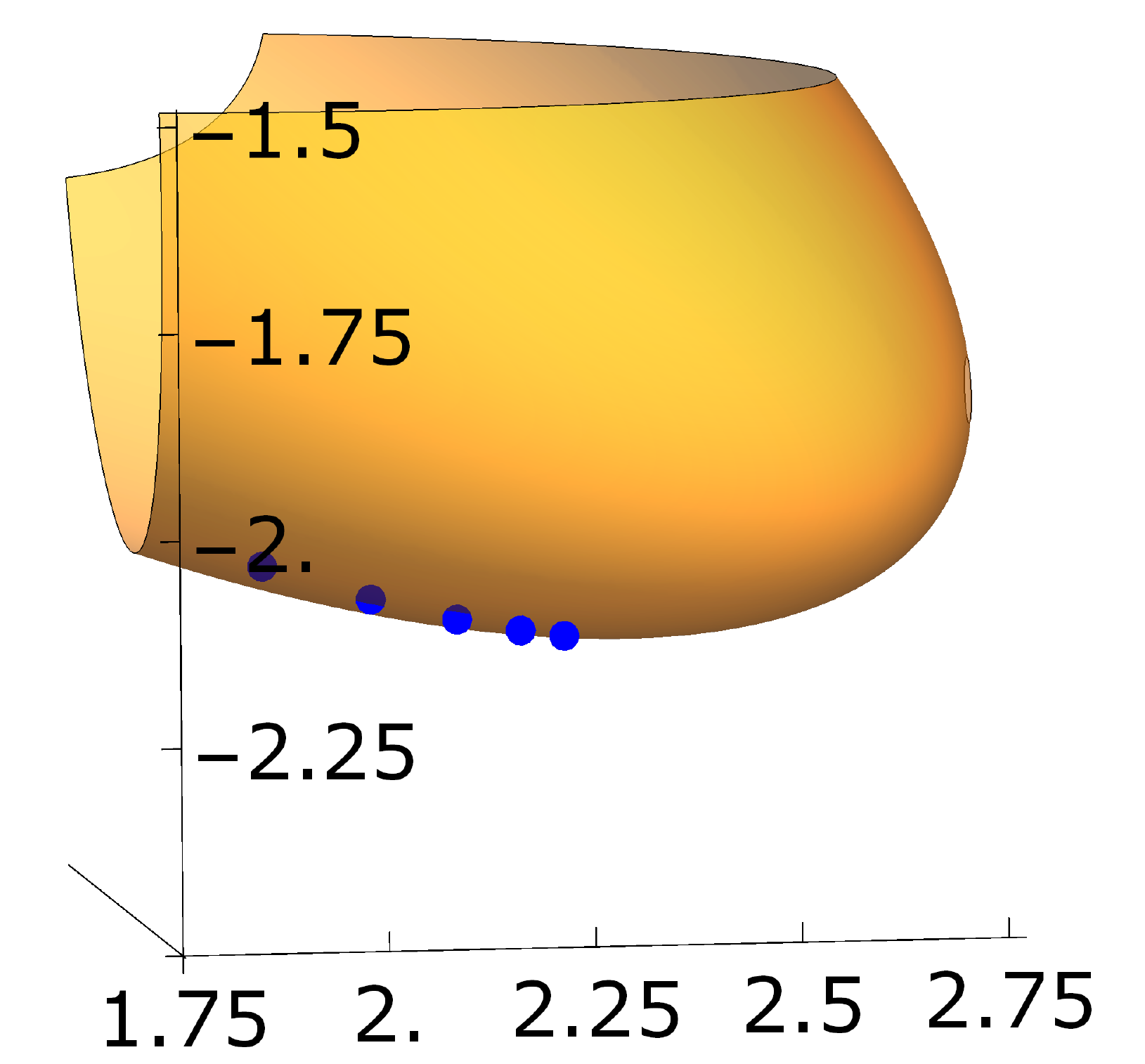}\hspace{5mm}
	\captionsetup[subfigure]{position=t}
	\includegraphics[height=0.26\textwidth]{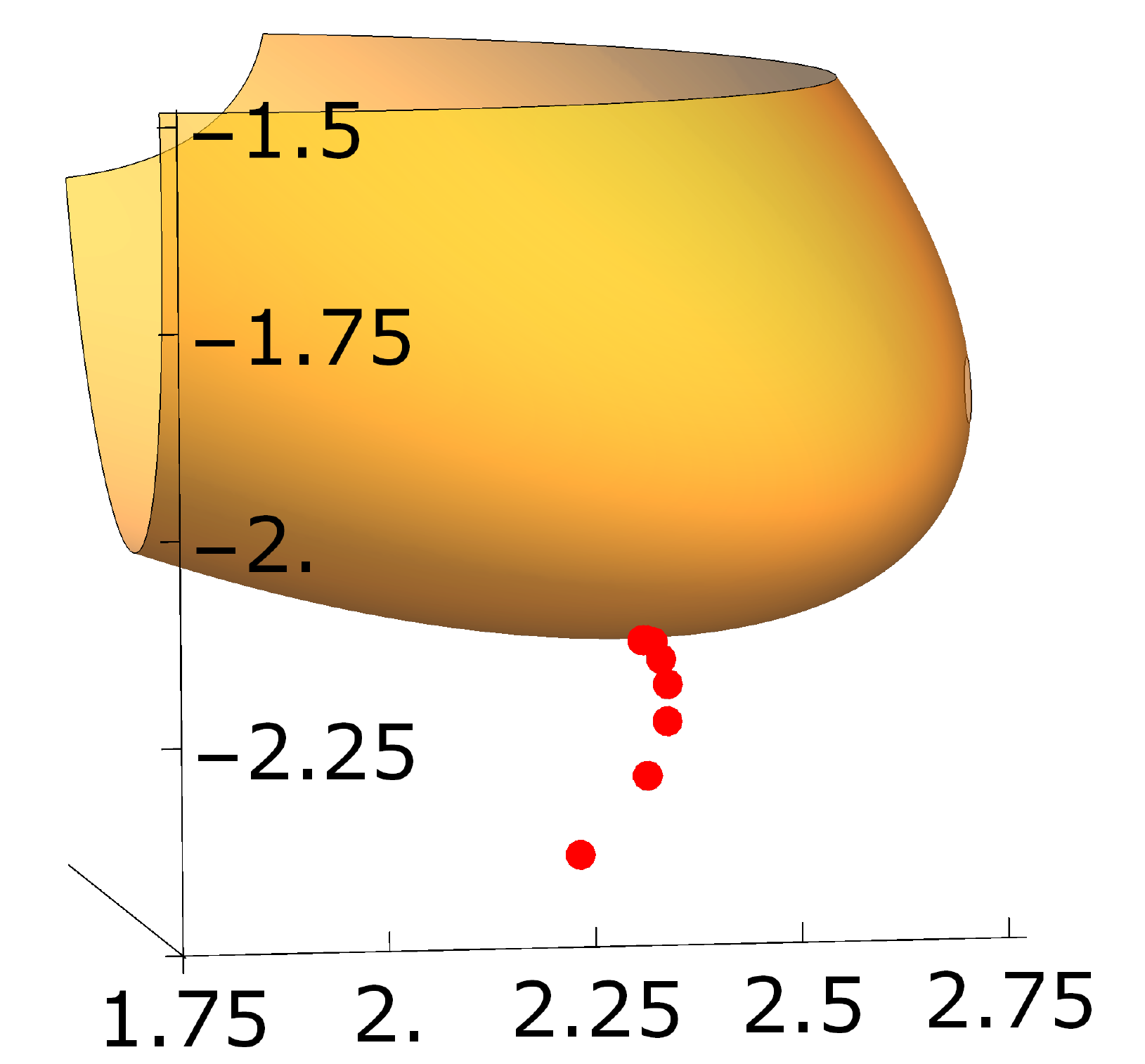}\hspace{5mm}
	\captionsetup[subfigure]{position=t}
	\includegraphics[height=0.26\textwidth]{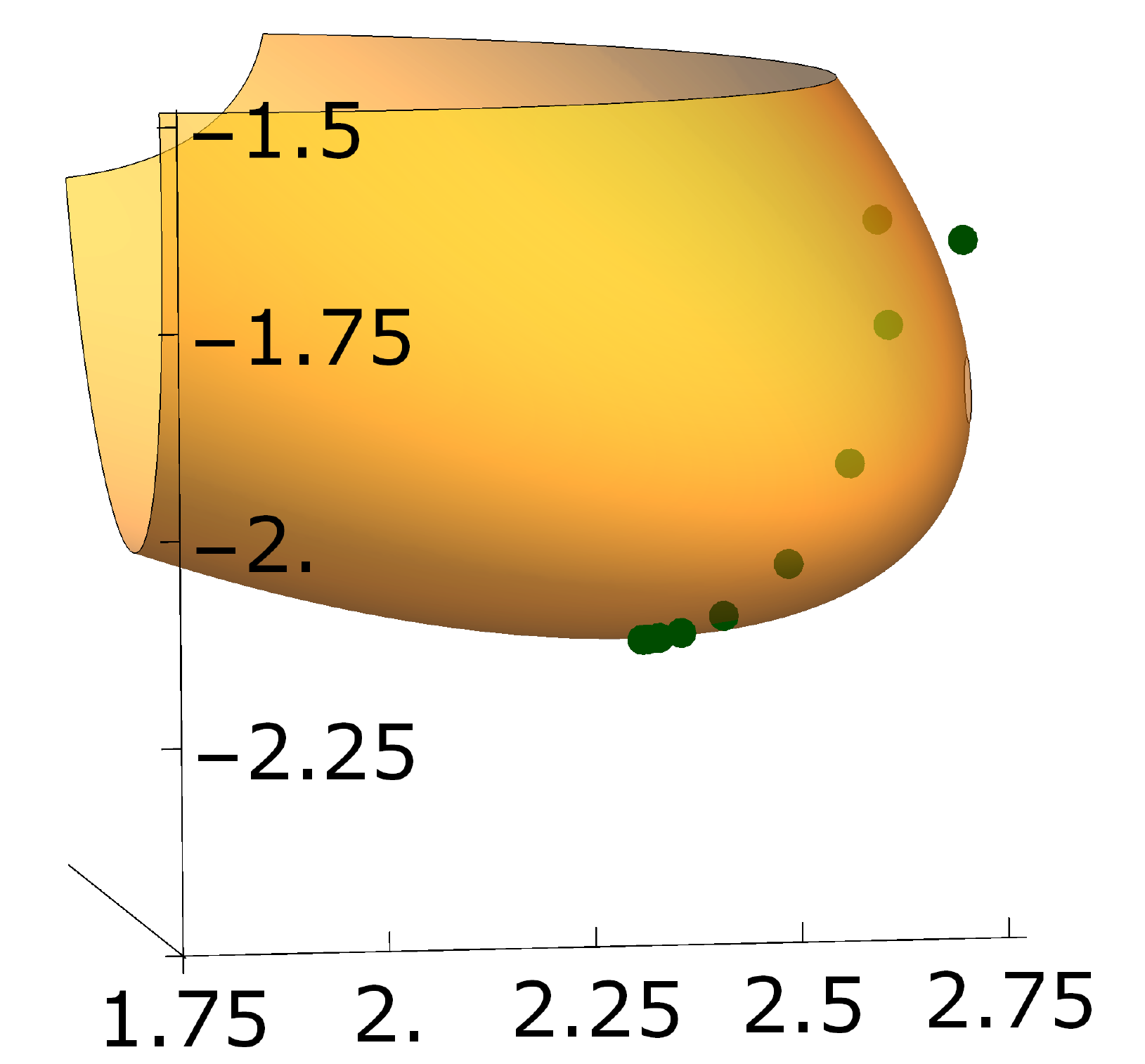}
	\caption{\small {Trajectory of Algorithm \ref{al:alg_1} for three different initializations. The yellow surface represents the feasible set and the , red and green points correspond to $\hat{\boldsymbol{x}}^1$, $\hat{\boldsymbol{x}}^2$ and $\hat{\boldsymbol{x}}^3$, respectively.}}
	\label{fig3}
\end{figure}


\begin{table*}[t]
	\centering
	\caption{\small { Trajectory of Algorithm \ref{al:alg_1} for three different initializations.}}
	\scalebox{0.56}{
		\begin{tabular}
			{c|cccc|cccc|cccc}
			\hline \hline
			\normalsize
			\multirow{2}{*}{Round} & 
			\multicolumn{4}{c|}{$\hat{\boldsymbol{x}}^1$} & 
			\multicolumn{4}{c|}{$\hat{\boldsymbol{x}}^2$} & 
			\multicolumn{4}{c}{$\hat{\boldsymbol{x}}^3$}\\
			\cline{2-13}
			& 
			$a$ (obj.) & $b$ & $c$ & \scalebox{0.80}{ $\mathrm{tr}\{\accentset{\ast}{\boldsymbol{X}}-\accentset{\ast}{\boldsymbol{x}}\accentset{\ast}{\boldsymbol{x}}^{\top}\}$} &
			$a$ (obj.) & $b$ & $c$ & \scalebox{0.80}{ $\mathrm{tr}\{\accentset{\ast}{\boldsymbol{X}}-\accentset{\ast}{\boldsymbol{x}}\accentset{\ast}{\boldsymbol{x}}^{\top}\}$} &
			$a$ (obj.) & $b$ & $c$ & \scalebox{0.80}{ $\mathrm{tr}\{\accentset{\ast}{\boldsymbol{X}}-\accentset{\ast}{\boldsymbol{x}}\accentset{\ast}{\boldsymbol{x}}^{\top}\}$}\\
			\hline
			\hline
			0&
			0.0000& 0.0000& 0.0000& -&
			-3.0000& 0.0000& 2.0000& -&
			0.0000& 4.0000& 0.0000& -
			\\
			1&
			-1.2739& 0.6601& -0.4697& 2.1884&
			-2.5377& 1.2831& -0.7380& 138.9796&
			-1.5721& 2.6848& -0.9492& 39.2455
			\\
			2&
			-1.5173& 1.1445& -1.0128& $<10^{-11}$&
			-2.4389& 2.0715& -1.3946& 51.1170&
			-1.5749& 2.7588& -1.3854& 13.5140
			\\
			3&
			-1.6882& 1.3773& -1.2015& $<10^{-11}$&
			-2.2889& 2.2685& -1.7098& 23.0050&
			-1.6678& 2.6583& -1.5228& 0.9995
			\\
			4&
			-1.8021& 1.5739& -1.3561& $<10^{-11}$&
			-2.1878& 2.3416& -1.8442& 11.4963&
			-1.8322& 2.6083& -1.5587& $<10^{-11}$
			\\
			5&
			-1.8824& 1.7447& -1.4873& $<10^{-11}$&
			-2.1194& 2.3621& -1.9007& 5.9206&
			-1.9460& 2.5261& -1.6624& $<10^{-11}$
			\\
			6&
			-1.9386& 1.8930& -1.5992& $<10^{-11}$&
			-2.0733& 2.3611& -1.9250& 2.9082&
			-2.0002& 2.4391& -1.7847& $<10^{-11}$
			\\
			7&
			-1.9760& 2.0180& -1.6923& $<10^{-11}$&
			-2.0423& 2.3526& -1.9352& 1.1594&
			-2.0156& 2.3824& -1.8598& $<10^{-11}$
			\\
			8&
			-1.9985& 2.1175& -1.7656& $<10^{-11}$&
			-2.0214& 2.3426& -1.9393& 0.0938&
			-2.0189& 2.3532& -1.8938& $<10^{-11}$
			\\
			9&
			-2.0104& 2.1907& -1.8193& $<10^{-11}$&
			-2.0197& 2.3352& -1.9302& $<10^{-11}$&
			-2.0196& 2.3387& -1.9079& $<10^{-11}$
			\\
			10&
			-2.0160& 2.2408& -1.8559& $<10^{-11}$&
			-2.0198& 2.3304& -1.9240& $<10^{-11}$&
			-2.0197& 2.3313& -1.9135& $<10^{-11}$
			\\ \hline \hline
	\end{tabular}}
	\label{table_traj}
\end{table*}

\section{Numerical experiments} \label{sec:exp} In this section we describe numerical experiments to test the effectiveness of the sequential penalization method for non-convex QCQPs from the library of quadratic programming instances (QPLIB) \cite{FuriniEtAl2017TR} as well as  large-scale system identification problems \cite{fattahi2018data}.

\subsection{QPLIB problems}
The experiments are performed on a desktop computer with a 12-core 3.0GHz CPU and 256GB RAM. MOSEK v8.1 \cite{mosek} is used through MATLAB 2017a to solve the resulting SDPs. { The size and number of constraints for each QPLIB instance are reported in Table \ref{tab:QPLIB}.}

\subsubsection{Sequential penalization}
Tables \ref{tab:Seq_SOCP}, \ref{tab:Seq_SOCP_RLT}, \ref{tab:Seq_SDP}, and \ref{tab:Seq_SDP_RLT} report the results of Algorithm \ref{al:alg_1} for $2\times 2$ SDP, $2\times 2$ SDP+RLT, SDP, and SDP+RLT relaxations, respectively. The following valid inequalities are imposed on all of the convex relaxations:
\begin{subequations}
	\begin{align}
	&X_{kk}-(x^{\mathrm{lb}\!\!\!\!\!\phantom{\mathrm{ub}}}_k+x^{\mathrm{ub}\!\!\!\!\!\phantom{\mathrm{lb}}}_k)x_k 
	+x^{\mathrm{lb}\!\!\!\!\!\phantom{\mathrm{ub}}}_k x^{\mathrm{ub}\!\!\!\!\!\phantom{\mathrm{lb}}}_k \leq 0,&\qquad\forall k\in\{1,\ldots,n\}\label{lbub}\\
	&X_{kk}-(x^{\mathrm{ub}\!\!\!\!\!\phantom{\mathrm{lb}}}_k+x^{\mathrm{ub}\!\!\!\!\!\phantom{\mathrm{lb}}}_k)x_k 
	+x^{\mathrm{ub}\!\!\!\!\!\phantom{\mathrm{lb}}}_k x^{\mathrm{ub}\!\!\!\!\!\phantom{\mathrm{lb}}}_k \geq 0,&\qquad\forall k\in\{1,\ldots,n\}\label{ubub}\\
	&X_{kk}-(x^{\mathrm{lb}\!\!\!\!\!\phantom{\mathrm{ub}}}_k+x^{\mathrm{lb}\!\!\!\!\!\phantom{\mathrm{ub}}}_k)x_k 
	+x^{\mathrm{lb}\!\!\!\!\!\phantom{\mathrm{ub}}}_k x^{\mathrm{lb}\!\!\!\!\!\phantom{\mathrm{ub}}}_k \geq 0,&\qquad\forall k\in\{1,\ldots,n\}\label{lblb}
	\end{align}
\end{subequations}
where $\boldsymbol{x}^{\mathrm{lb}},\boldsymbol{x}^{\mathrm{ub}}\in\mathbb{R}^n$ are given lower and upper bounds on $\boldsymbol{x}$. Problem \eqref{prob_lifted_obj}--\eqref{prob_lifted_conic} is solved with the following four settings:
\begin{itemize}
	\item {\it $2\times2$ SDP relaxation:} $r=2$ and valid inequalities \eqref{lbub} -- \eqref{lblb}.
	\item {\it $2\times2$ SDP+RLT relaxation:} $\mathcal{V}=\mathcal{H}\times\mathcal{H}$ and $r=2$.
	\item {\it SDP relaxation:} $r=n$ and valid inequalities \eqref{lbub} -- \eqref{lblb}.
	\item {\it SDP+RLT relaxation:} $\mathcal{V}=\mathcal{H}\times\mathcal{H}$ and $r=n$,
\end{itemize}
where $\mathcal{V}$ and $\mathcal{H}$ is defined in Appendix \ref{apn:RLT}. {\color{black}The assumption $\mathcal{V}=\mathcal{H}\times\mathcal{H}$ means that every pairs of linear constraints are used to generate RLT inequalities.}
Let $(\accentset{\ast}{\boldsymbol{x}},\accentset{\ast}{\boldsymbol{X}})$ denote the optimal solution of the convex relaxation \eqref{prob_lifted_obj}-\eqref{prob_lifted_conic}. We use the point $\hat{\boldsymbol{x}} = \accentset{\ast}{\boldsymbol{x}}$ as the initial point of the algorithm.


The penalty parameter $\eta$ is chosen via bisection as the smallest number of the form $\alpha\times10^\beta$, which results in a tight penalization during the first six rounds, where $\alpha\in\{1,2,5\}$ and $\beta$ is an integer. In all of the experiments, the value of $\eta$ has remained static throughout Algorithm \ref{al:alg_1}. Denote the sequence of penalized SDP solutions obtained by Algorithm \ref{al:alg_1} as
\begin{align}
(\boldsymbol{x}^{(1)},\boldsymbol{X}^{(1)}), \;\;
(\boldsymbol{x}^{(2)},\boldsymbol{X}^{(2)}), \;\;
(\boldsymbol{x}^{(3)},\boldsymbol{X}^{(3)}), \;\;\ldots\nonumber
\end{align}
The smallest $i$ such that
\begin{align}
\mathrm{tr}\{\boldsymbol{X}^{(i)}-\boldsymbol{x}^{(i)}(\boldsymbol{x}^{(i)})^{\top}\}<10^{-7}
\end{align}
is denoted by $i^{\mathrm{feas}}$, i.e., it is the number of rounds that Algorithm \ref{al:alg_1} needs to attain a tight penalization. 
Moreover, the smallest $i$ such that 
\begin{align}
\frac{q_0(\boldsymbol{x}^{(i-1)})-q_0(\boldsymbol{x}^{(i)})}{|q_0(\boldsymbol{x}^{(i)})|}\leq 5\times 10^{-4}\label{scri}
\end{align}
is denoted by $i^{\mathrm{stop}}$, and $\mathrm{UB}\triangleq q_0(\boldsymbol{x}^{(i^{\mathrm{stop}})})$. The following formula is used to calculate the final percentage gaps from the optimal costs reported by the QPLIB library:
\begin{align}
\mathrm{GAP}(\%)&=100\times\frac{q_{0}^{\mathrm{stop}}-
	q_0(\boldsymbol{x}^{\mathrm{QPLIB}})}{|q_0(\boldsymbol{x}^{\mathrm{QPLIB}})|}.\!\!\!\label{gapgap}
\end{align}
{Moreover, $t$(s) denotes the cumulative solver time in seconds for the $i^{\mathrm{stop}}$ rounds. Our results are compared with BARON \cite{BARON} and COUENNE \cite{COUENNE} by fixing the maximum solver times equal to the accumulative solver times spent by Algorithm \ref{al:alg_1}.} We ran BARON and COUENNE through GAMS v25.1.2 \cite{GamsSoftware2013}. The resulting lower bounds, upper bounds and GAPs (from the equation \eqref{gapgap}) are reported in Tables \ref{tab:Seq_SOCP}, \ref{tab:Seq_SOCP_RLT}, \ref{tab:Seq_SDP}, and \ref{tab:Seq_SDP_RLT}.

\begin{table}[t]
\centering
\caption{\small  QPLIB benchmark problems.}
\scalebox{0.65}{\begin{tabular}{c|ccc||c|ccc||c|ccc||c|ccc}
\hline \hline
\!\!\!\multirow{2}{*}{Inst}\!\!\! & Total & Quad & Total & 
\!\!\!\multirow{2}{*}{Inst}\!\!\! & Total & Quad & Total & 
\!\!\!\multirow{2}{*}{Inst}\!\!\! & Total & Quad & Total & 
\!\!\!\multirow{2}{*}{Inst}\!\!\! & Total & Quad & Total \\
	& Var & Cons & Cons &
	& Var & Cons & Cons &
	& Var & Cons & Cons &
	& Var & Cons & Cons \\
\hline \hline
\!\!\! 0343 \!\!\!  & 50 & 0 & 1 & \!\!\! 1353 \!\!\!  & 50 & 1 & 6 & \!\!\! 1535 \!\!\!  & 60 & 60 & 66 & \!\!\! 1773 \!\!\!  & 60 & 1 & 7 \\
\!\!\! 0911 \!\!\!  & 50 & 50 & 50 & \!\!\! 1423 \!\!\!  & 40 & 20 & 24 & \!\!\! 1619 \!\!\!  & 50 & 25 & 30 & \!\!\! 1886 \!\!\!  & 50 & 50 & 50 \\
\!\!\! 0975 \!\!\!  & 50 & 10 & 10 & \!\!\! 1437 \!\!\!  & 50 & 1 & 11 & \!\!\! 1661 \!\!\!  & 60 & 1 & 13 & \!\!\! 1913 \!\!\!  & 48 & 48 & 48 \\
\!\!\! 1055 \!\!\!  & 40 & 20 & 20 & \!\!\! 1451 \!\!\!  & 60 & 60 & 66 & \!\!\! 1675 \!\!\!  & 60 & 1 & 13 & \!\!\! 1922 \!\!\!  & 30 & 60 & 60 \\
\!\!\! 1143 \!\!\!  & 40 & 20 & 24 & \!\!\! 1493 \!\!\!  & 40 & 1 & 5 & \!\!\! 1703 \!\!\!  & 60 & 30 & 36 & \!\!\! 1931 \!\!\!  & 40 & 40 & 40 \\
\!\!\! 1157 \!\!\!  & 40 & 1 & 9 & \!\!\! 1507 \!\!\!  & 30 & 30 & 33 & \!\!\! 1745 \!\!\!  & 50 & 50 & 55 & \!\!\! 1967 \!\!\!  & 50 & 75 & 75 \\
\hline \hline
\end{tabular}}
\label{tab:QPLIB}
\end{table}

\begin{table}[t]
	\centering
	\caption{\small Sequential penalized $2\times 2$ SDP.}
	\scalebox{0.7}{\begin{tabular}{c|cccccc|ccc|ccc}
			\hline \hline
			\!\!\!\multirow{2}{*}{Inst}\!\!\! & 
			\multicolumn{6}{c|}{\bf Sequential penalized $2\times 2$ SDP} & 
			\multicolumn{3}{c|}{\bf BARON} & 
			\multicolumn{3}{c}{\bf COUENNE} \\
			\cline{2-13}
			& $\eta$ & $i^{\sm{\mathrm{feas}}{6}}$ & $i^{\sm{\mathrm{stop}}{6}}$ & $t$(s) 
			& UB & GAP(\%)\!\!\! &  
			LB & UB & GAP(\%)\!\!\! & LB & UB & GAP(\%)\!\!\! \\
			\hline
			\hline
			\!\!\!\! 0343$\dagger$ \!\!\!\! & \!\!
			\!\!\! 5e+2 \!\!\!\!\! & \!\!\!\!\!\!\!\! 1 \!\!\!\!\!  & \!\!\!\!\!\!  100 \!\!\!\!\! & \!\!\!\!\!\!  75.27 \!\!\!\!\! 
			& \!\!  -5.882 \!\!\!\!\! & \!\!\!\!\!\!  { 7.89} \!\!\!\!\!  & \!\!\!\!  
			\!  -95.372 \!\!\! & \!\!\!  -6.386 \!\!\! & \!\!\!\!\!\!  0.00 \!\!\!\!\! & \!\!\!\!
			\!  -7668.005 \!\!\!\!\! & \!\!\!  -6.386 \!\!\! & \!\!\!\!\!\!  0.00 \!\!\!\!\!\!\!\!\!\\ 
			\!\!\!\! 0911 \!\!\!\! & \!\!
			\!\!\! 1e+1 \!\!\!\!\! & \!\!\!\!\!\!\!\! 1 \!\!\!\!\!  & \!\!\!\!\!\!  29 \!\!\!\!\! & \!\!\!\!\!\!  22.91 \!\!\!\!\! 
			& \!\!  -30.675 \!\!\!\!\! & \!\!\!\!\!\!  { 4.58} \!\!\!\!\!  & \!\!\!\!  
			\!  -172.777 \!\!\! & \!\!\!  0.000 \!\!\! & \!\!\!\!\!\!  100 \!\!\!\!\! & \!\!\!\!
			\!  -172.777 \!\!\! & \!\!\!  -31.026 \!\!\! & \!\!\!\!\!\!  3.49 \!\!\!\!\!\!\!\!\!\\ 
			\!\!\!\! 0975 \!\!\!\! & \!\!
			\!\!\! 5e+0 \!\!\!\!\! & \!\!\!\!\!\!\!\! 6 \!\!\!\!\!  & \!\!\!\!\!\!  18 \!\!\!\!\! & \!\!\!\!\!\!  46.36 \!\!\!\!\! 
			& \!\!  -36.434 \!\!\!\!\! & \!\!\!\!\!\!  { 3.75} \!\!\!\!\!  & \!\!\!\!  
			\!  -47.428 \!\!\! & \!\!\!  -37.801 \!\!\! & \!\!\!\!\!\!  0.14 \!\!\!\!\! & \!\!\!\!
			\! -171.113 \!\!\! & \!\!\!  -37.213 \!\!\! & \!\!\!\!\!\!  1.69 \!\!\!\!\!\!\!\!\!\\ 
			\!\!\!\! 1055 \!\!\!\! & \!\!
			\!\!\! 1e+1 \!\!\!\!\! & \!\!\!\!\!\!\!\! 1 \!\!\!\!\!  & \!\!\!\!\!\!  22 \!\!\!\!\!  & \!\!\!\!\!\!  14.39 \!\!\!\!\! 
			& \!\!  -32.620 \!\!\!\!\! & \!\!\!\!\!\!  { 1.26} \!\!\!\!\!  & \!\!\!\!  
			\!  -37.841 \!\!\! & \!\!\!  -33.037 \!\!\! & \!\!\!\!\!\!  0.00 \!\!\!\!\! & \!\!\!\!
			\!  -199.457 \!\!\! & \!\!\!  -33.037 \!\!\! & \!\!\!\!\!\!  0.00 \!\!\!\!\!\!\!\!\!\\ 
			\!\!\!\! 1143 \!\!\!\! & \!\!
			\!\!\! 2e+1 \!\!\!\!\! & \!\!\!\!\!\!\!\! 1 \!\!\!\!\!  & \!\!\!\!\!\!  44 \!\!\!\!\! & \!\!\!\!\!\!  25.68 \!\!\!\!\! 
			& \!\!  -55.417 \!\!\!\!\! & \!\!\!\!\!\! {  3.20} \!\!\!\!\!  & \!\!\!\!  
			\!  -69.522 \!\!\! & \!\!\!  -57.247 \!\!\! & \!\!\!\!\!\!  0.00 \!\!\!\!\! & \!\!\!\!
			\! -384.45 \!\!\! & \!\!\!  -56.237 \!\!\! & \!\!\!\!\!\!  1.76 \!\!\!\!\!\!\!\!\!\\ 
			\!\!\!\! 1157 \!\!\!\! & \!\!
			\!\!\! 2e+0 \!\!\!\!\! & \!\!\!\!\!\!\!\! 2 \!\!\!\!\!  & \!\!\!\!\!\!  9 \!\!\!\!\!  & \!\!\!\!\!\!  9.01 \!\!\!\!\! 
			& \!\!  -10.938 \!\!\!\!\! & \!\!\!\!\!\!  { 0.10} \!\!\!\!\!  & \!\!\!\!  
			\!  -11.414 \!\!\! & \!\!\!  -10.948 \!\!\! & \!\!\!\!\!\!  0.00 \!\!\!\!\! & \!\!\!\!
			\!  -80.51 \!\!\! & \!\!\!  -10.948 \!\!\! & \!\!\!\!\!\!  0.00 \!\!\!\!\!\!\!\!\!\\ 
			\!\!\!\! 1353 \!\!\!\! & \!\!
			\!\!\! 5e+0 \!\!\!\!\! & \!\!\!\!\!\!\!\! 1 \!\!\!\!\!  & \!\!\!\!\!\!  48 \!\!\!\!\! & \!\!\!\!\!\!  84.90 \!\!\!\!\!  
			& \!\!  -7.700 \!\!\!\!\! & \!\!\!\!\!\!  { 0.19} \!\!\!\!\! & \!\!\!\!  
			\! -7.925 \!\!\! & \!\!\!  -7.714 \!\!\! & \!\!\!\!\!\!  0.00 \!\!\!\!\! & \!\!\!\!
			\! -73.28 \!\!\! & \!\!\!  -7.714 \!\!\! & \!\!\!\!\!\!  0.00 \!\!\!\!\!\!\!\!\!\\ 
			\!\!\!\! 1423 \!\!\!\! & \!\!
			\!\!\! 5e+0 \!\!\!\!\! & \!\!\!\!\!\!\!\! 1 \!\!\!\!\!  & \!\!\!\!\!\!  29 \!\!\!\!\!  & \!\!\!\!\!\!  17.44 \!\!\!\!\!  
			& \!\!  -14.684 \!\!\!\!\! & \!\!\!\!\!\!  { 1.90} \!\!\!\!\! & \!\!\!\!  
			\!  -16.313 \!\!\! & \!\!\!  -14.968 \!\!\! & \!\!\!\!\!\!  0.00 \!\!\!\!\! & \!\!\!\!
			\!  -76.13 \!\!\! & \!\!\!  -14.871 \!\!\! & \!\!\!\!\!\!  0.65 \!\!\!\!\!\!\!\!\!\\ 
			\!\!\!\! 1437 \!\!\!\! & \!\!
			\!\!\! 5e+0 \!\!\!\!\! & \!\!\!\!\!\!\!\! 1 \!\!\!\!\!  & \!\!\!\!\!\!  36 \!\!\!\!\!  & \!\!\!\!\!\!  54.57 \!\!\!\!\!  
			& \!\!  -7.785 \!\!\!\!\! & \!\!\!\!\!\!  { 0.06} \!\!\!\!\! & \!\!\!\!  
			\!  -9.601 \!\!\! & \!\!\!  -7.789 \!\!\! & \!\!\!\!\!\!  0.00 \!\!\!\!\! & \!\!\!\!
			\!  -87.58 \!\!\! & \!\!\!  -7.789 \!\!\! & \!\!\!\!\!\!  0.00 \!\!\!\!\!\!\!\!\!\\ 
			\!\!\!\! 1451 \!\!\!\! & \!\!
			\!\!\! 2e+1 \!\!\!\!\! & \!\!\!\!\!\!\!\! 4 \!\!\!\!\!  & \!\!\!\!\!\!  21 \!\!\!\!\!  & \!\!\!\!\!\!  20.86 \!\!\!\!\!  
			& \!\!  -85.598 \!\!\!\!\! & \!\!\!\!\!\! { 2.26} \!\!\!\!\! & \!\!\!\!  
			\!  -135.140 \!\!\! & \!\!\!  -87.577 \!\!\! & \!\!\!\!\!\!  0.00 \!\!\!\!\! & \!\!\!\!
			\!  -468.04 \!\!\! & \!\!\!  -86.860 \!\!\! & \!\!\!\!\!\!  0.82 \!\!\!\!\!\!\!\!\!\\ 
			\!\!\!\! 1493 \!\!\!\! & \!\!
			\!\!\! 2e+1 \!\!\!\!\! & \!\!\!\!\!\!\!\! 1 \!\!\!\!\!  & \!\!\!\!\!\!  18 \!\!\!\!\!  & \!\!\!\!\!\!  14.49 \!\!\!\!\!  
			& \!\!  -41.910 \!\!\!\!\! & \!\!\!\!\!\!  { 2.90} \!\!\!\!\! & \!\!\!\!  
			\!  -47.239 \!\!\! & \!\!\!  -43.160 \!\!\! & \!\!\!\!\!\!  0.00 \!\!\!\!\! & \!\!\!\!
			\!  -395.69 \!\!\! & \!\!\!  -43.160 \!\!\! & \!\!\!\!\!\!  0.00 \!\!\!\!\!\!\!\!\!\\ 
			\!\!\!\! 1507 \!\!\!\! & \!\!
			\!\!\!\! 2e+0 \!\!\!\!\! & \!\!\!\!\!\!\!\! 1 \!\!\!\!\!  & \!\!\!\!\!\! 15 \!\!\!\!\!  & \!\!\!\!\!\!  8.98 \!\!\!\!\!  
			& \!\!  -8.289 \!\!\!\!\! & \!\!\!\!\!\!  { 0.15} \!\!\!\!\! & \!\!\!\!  
			\! -49.709 \!\!\! & \!\!\!  -8.301 \!\!\! & \!\!\!\!\!\!  0.00 \!\!\!\!\! & \!\!\!\!
			\! -44.37 \!\!\! & \!\!\!  -8.301 \!\!\! & \!\!\!\!\!\!  0.00 \!\!\!\!\!\!\!\!\!\\ 
			\!\!\!\! 1535 \!\!\!\! & \!\!
			\!\!\! 5e+0 \!\!\!\!\! & \!\!\!\!\!\!\!\! 1 \!\!\!\!\!  & \!\!\!\!\!\!  26 \!\!\!\!\!  & \!\!\!\!\!\!  28.16 \!\!\!\!\!  
			& \!\!  -10.948 \!\!\!\!\! & \!\!\!\!\!\!  { 5.51} \!\!\!\!\! & \!\!\!\!  
			\!  -13.407 \!\!\! & \!\!\!  -11.397 \!\!\! & \!\!\!\!\!\!  1.63 \!\!\!\!\! & \!\!\!\!
			\!  -107.86 \!\!\! & \!\!\!  -11.398 \!\!\! & \!\!\!\!\!\!  1.63 \!\!\!\!\!\!\!\!\!\\ 
			\!\!\!\! 1619 \!\!\!\! & \!\!
			\!\!\! 5e+0 \!\!\!\!\! & \!\!\!\!\!\!\!\! 1 \!\!\!\!\!  & \!\!\!\!\!\!  39 \!\!\!\!\!  & \!\!\!\!\!\!  32.34 \!\!\!\!\! 
			& \!\!  -9.210 \!\!\!\!\! & \!\!\!\!\!\!  { 0.08} \!\!\!\!\! & \!\!\!\!  
			\! -10.302 \!\!\! & \!\!\!  -9.217 \!\!\! & \!\!\!\!\!\!  0.00 \!\!\!\!\! & \!\!\!\!
			\! -74.55 \!\!\! & \!\!\!  -9.217 \!\!\! & \!\!\!\!\!\!  0.00 \!\!\!\!\!\!\!\!\!\\ 
			\!\!\!\! 1661 \!\!\!\! & \!\!
			\!\!\! 5e+0 \!\!\!\!\! & \!\!\!\!\!\!\!\! 1 \!\!\!\!\!  & \!\!\!\!\!\!   32 \!\!\!\!\!  & \!\!\!\!\!\!  87.50  \!\!\!\!\!  
			& \!\!  -15.666 \!\!\!\!\! & \!\!\!\!\!\!  { 1.81} \!\!\!\!\! & \!\!\!\!  
			\!  -19.667 \!\!\! & \!\!\!  -15.955 \!\!\! & \!\!\!\!\!\!  0.00 \!\!\!\!\! & \!\!\!\!
			\!  -139.25 \!\!\! & \!\!\!  -15.955 \!\!\! & \!\!\!\!\!\!  0.00 \!\!\!\!\!\!\!\!\!\\ 
			\!\!\!\! 1675 \!\!\!\! & \!\!
			\!\!\! 2e+1 \!\!\!\!\! & \!\!\!\!\!\!\!\! 1 \!\!\!\!\!  & \!\!\!\!\!\!  21 \!\!\!\!\!  & \!\!\!\!\!\!  36.38 \!\!\!\!\! 
			& \!\!  -75.485 \!\!\!\!\! & \!\!\!\!\!\! { 0.24} \!\!\!\!\! & \!\!\!\!  
			\! -96.864 \!\!\! & \!\!\!  -75.669 \!\!\! & \!\!\!\!\!\!  0.00 \!\!\!\!\! & \!\!\!\!
			\! -435.48 \!\!\! & \!\!\!  -75.669 \!\!\! & \!\!\!\!\!\!  0.00 \!\!\!\!\!\!\!\!\!\\ 
			\!\!\!\! 1703 \!\!\!\! & \!\!
			\!\!\! 5e+1 \!\!\!\!\! & \!\!\!\!\!\!\!\! 2 \!\!\!\!\!  & \!\!\!\!\!\!  30 \!\!\!\!\!  & \!\!\!\!\!\!  31.82 \!\!\!\!\!  
			& \!\!  -130.902 \!\!\!\!\! & \!\!\!\!\!\! { 1.43} \!\!\!\!\! & \!\!\!\!  
			\!  -180.935 \!\!\! & \!\!\!  -132.802 \!\!\! & \!\!\!\!\!\!  0.00 \!\!\!\!\! & \!\!\!\!
			\!  -929.92 \!\!\! & \!\!\!  -132.802 \!\!\! & \!\!\!\!\!\!  0.00 \!\!\!\!\!\!\!\!\!\\ 
			\!\!\!\! 1745 \!\!\!\! & \!\!
			\!\!\! 2e+1 \!\!\!\!\! & \!\!\!\!\!\!\!\! 1 \!\!\!\!\!  & \!\!\!\!\!\!  26 \!\!\!\!\!  & \!\!\!\!\!\!  22.15 \!\!\!\!\!  
			& \!\!  -71.704 \!\!\!\!\! & \!\!\!\!\!\!  { 0.93} \!\!\!\!\! & \!\!\!\!  
			\!  -77.465 \!\!\! & \!\!\!  -72.377 \!\!\! & \!\!\!\!\!\!  0.00 \!\!\!\!\! & \!\!\!\!
			\!  -317.99 \!\!\! & \!\!\!  -72.377 \!\!\! & \!\!\!\!\!\!  0.00 \!\!\!\!\!\!\!\!\!\\ 
			\!\!\!\! 1773 \!\!\!\! & \!\!
			\!\!\! 5e+0 \!\!\!\!\! & \!\!\!\!\!\!\!\! 1 \!\!\!\!\!  & \!\!\!\!\!\!  56 \!\!\!\!\!  & \!\!\!\!\!\!  148.79 \!\!\!\!\!  
			& \!\!  -14.154 \!\!\!\!\! & \!\!\!\!\!\!  { 3.34} \!\!\!\!\! & \!\!\!\!  
			\!  -21.581 \!\!\! & \!\!\!  -14.642 \!\!\! & \!\!\!\!\!\!  0.00 \!\!\!\!\! & \!\!\!\!
			\!  -118.65 \!\!\! & \!\!\!  -14.642 \!\!\! & \!\!\!\!\!\!  0.00 \!\!\!\!\!\!\!\!\!\\ 
			\!\!\!\! 1886 \!\!\!\! & \!\!
			\!\!\! 2e+1 \!\!\!\!\! & \!\!\!\!\!\!\!\! 1 \!\!\!\!\!  & \!\!\!\!\!\! 34 \!\!\!\!\!  & \!\!\!\!\!\!  26.82 \!\!\!\!\!  
			& \!\!  -78.604 \!\!\!\!\! & \!\!\!\!\!\!  { 0.09} \!\!\!\!\! & \!\!\!\!  
			\!  -135.615 \!\!\! & \!\!\!  -78.672 \!\!\! & \!\!\!\!\!\!  0.00 \!\!\!\!\! & \!\!\!\!
			\!  -324.87 \!\!\! & \!\!\!  -78.672 \!\!\! & \!\!\!\!\!\!  0.00 \!\!\!\!\!\!\!\!\!\\ 
			\!\!\!\! 1913 \!\!\!\! & \!\!
			\!\!\! 1e+1 \!\!\!\!\! & \!\!\!\!\!\!\!\! 1 \!\!\!\!\!  & \!\!\!\!\!\!  28 \!\!\!\!\!  & \!\!\!\!\!\!  21.91 \!\!\!\!\! 
			& \!\!  -51.889 \!\!\!\!\! & \!\!\!\!\!\!  { 0.42} \!\!\!\!\! & \!\!\!\!  
			\!  -68.555 \!\!\! & \!\!\!  -52.109 \!\!\! & \!\!\!\!\!\!  0.00 \!\!\!\!\! & \!\!\!\!
			\!  -164.26 \!\!\! & \!\!\!  -51.478 \!\!\! & \!\!\!\!\!\!  1.21 \!\!\!\!\!\!\!\!\!\\ 
			\!\!\!\! 1922 \!\!\!\! & \!\!
			\!\!\! 1e+1 \!\!\!\!\! & \!\!\!\!\!\!\!\! 1 \!\!\!\!\!  & \!\!\!\!\!\!  23 \!\!\!\!\!  & \!\!\!\!\!\!  11.16 \!\!\!\!\!  
			& \!\!  -35.437 \!\!\!\!\! & \!\!\!\!\!\!  { 1.43} \!\!\!\!\! & \!\!\!\!  
			\!  -121.872 \!\!\! & \!\!\!  -35.951 \!\!\! & \!\!\!\!\!\!  0.00 \!\!\!\!\! & \!\!\!\!
			\!  -123.2 \!\!\! & \!\!\!  -35.951 \!\!\! & \!\!\!\!\!\!  0.00  \!\!\!\!\!\!\!\!\!\\ 
			\!\!\!\! 1931 \!\!\!\! & \!\!
			\!\!\! 1e+1 \!\!\!\!\! & \!\!\!\!\!\!\!\! 1 \!\!\!\!\!  & \!\!\!\!\!\!  13 \!\!\!\!\!  & \!\!\!\!\!\!  8.78 \!\!\!\!\!  
			& \!\!  -53.684 \!\!\!\!\! & \!\!\!\!\!\!  { 3.64} \!\!\!\!\!& \!\!\!\!  
			\!  -85.196 \!\!\! & \!\!\!  -55.709 \!\!\! & \!\!\!\!\!\!  0.00 \!\!\!\!\! & \!\!\!\!
			\!  -204.08 \!\!\! & \!\!\!  -54.290 \!\!\! & \!\!\!\!\!\!  2.55 \!\!\!\!\!\!\!\!\!\\ 
			\!\!\!\! 1967 \!\!\!\! & \!\!
			\!\!\! 5e+1 \!\!\!\!\! & \!\!\!\!\!\!\!\! 1 \!\!\!\!\!  & \!\!\!\!\!\!  32 \!\!\!\!\!  & \!\!\!\!\!\!  27.23 \!\!\!\!\! 
			& \!\!  -105.570 \!\!\!\!\! & \!\!\!\!\!\!  { 1.87} \!\!\!\!\!  & \!\!\!\!  
			\!  -136.098 \!\!\! & \!\!\!  0.000 \!\!\! & \!\!\!\!\!\!  100 \!\!\!\!\! & \!\!\!\!
			\!  -622.57 \!\!\! & \!\!\! -107.581 \!\!\! & \!\!\!\!\!\!  0.00 \!\!\!\!\!\!\!\!\!\\ 
			\hline
			\!\!\!\! Max \!\!\!\! & \!\!
			\!\!\! 500 \!\!\!\!\! & \!\!\!\!\!\!\!\! 6 \!\!\!\!\!  & \!\!\!\!\!\!  100 \!\!\!\!\!  & \!\!\!\!\!\! 148.79 \!\!\!\!\! 
			& \!\!   \!\!\!\!\! & \!\!\!\!\!\!  7.89 \!\!\!\!\! &  \!\!\!\!    
			\!    \!\!\! & \!\!\!    \!\!\! & \!\!\!\!\!\!  100 \!\!\!\!\! & \!\!\!\!
			\!    \!\!\! & \!\!\!    \!\!\! & \!\!\!\!\!\!  3.34 \!\!\!\!\!\!\!\!\!\\ 
			\hline \hline
	\end{tabular}}
	\label{tab:Seq_SOCP}
\end{table}

\begin{table}[t]
	\centering
	\caption{\small  Sequential penalized $2\times 2$ SDP+RLT.}
	\scalebox{0.70}{\begin{tabular}{c|cccccc|ccc|ccc}
			\hline \hline
			\!\!\!\multirow{2}{*}{Inst}\!\!\! & 
			\multicolumn{6}{c|}{\bf Sequential $2\times 2$ SDP+RLT} & 
			\multicolumn{3}{c|}{\bf BARON} & 
			\multicolumn{3}{c}{\bf COUENNE} \\
			\cline{2-13}
			& $\eta$ & $i^{\sm{\mathrm{feas}}{6}}$ & $i^{\sm{\mathrm{stop}}{6}}$ & $t$(s) 
			& UB & GAP(\%)\!\!\! & 
			LB & UB & GAP(\%)\!\!\! & LB & UB & GAP(\%)\!\!\! \\
			\hline
			\hline
			\!\!\!\! 0343 \!\!\!\! & \!\!
			\!\!\! 1e+2 \!\!\!\!\! & \!\!\!\!\!\!\!\! 4 \!\!\!\!\!  & \!\!\!\!\!\!  24 \!\!\!\!\!  & \!\!\!\!\!\!  25.23 \!\!\!\!\! 
			& \!\!  -5.945 \!\!\!\!\! & \!\!\!\!\!\!  6.91 \!\!\!\!\! &  \!\!\!\!  
			\!  -95.372 \!\!\! & \!\!\!  -6.386 \!\!\! & \!\!\!\!\!\!  0.00 \!\!\!\!\! & \!\!\!\!
			\!  -7668.005 \!\!\! & \!\!\!  -6.386 \!\!\! & \!\!\!\!\!\!  0.00 \!\!\!\!\!\!\!\!\!\\ 
			\!\!\!\! 0911 \!\!\!\! & \!\!
			\!\!\! 1e+1 \!\!\!\!\! & \!\!\!\!\!\!\!\! 1 \!\!\!\!\!  & \!\!\!\!\!\!  33 \!\!\!\!\!  & \!\!\!\!\!\!  27.69 \!\!\!\!\! 
			& \!\!  -30.923 \!\!\!\!\! & \!\!\!\!\!\!  3.81 \!\!\!\!\!  & \!\!\!\!  
			\!  -172.777 \!\!\! & \!\!\! -32.148 \!\!\! & \!\!\!\!\!\!  0.00 \!\!\!\!\! & \!\!\!\!
			\!  -172.777 \!\!\! & \!\!\!  -31.026 \!\!\! & \!\!\!\!\!\!  3.49 \!\!\!\!\!\!\!\!\!\\ 
			\!\!\!\! 0975 \!\!\!\! & \!\!
			\!\!\! 5e+0 \!\!\!\!\! & \!\!\!\!\!\!\!\! 6 \!\!\!\!\!  & \!\!\!\!\!\!  15 \!\!\!\!\!  & \!\!\!\!\!\!  4.10 \!\!\!\!\! 
			& \!\!  -36.300 \!\!\!\!\! &  \!\!\!\!\!\!  13.17 \!\!\!\!\! & \!\!\!\!  
			\!  -47.428 \!\!\! & \!\!\!  -37.794 \!\!\! & \!\!\!\!\!\!  0.16 \!\!\!\!\! & \!\!\!\!
			\!  -171.113 \!\!\! & \!\!\! -36.812 \!\!\! & \!\!\!\!\!\! 2.75 \!\!\!\!\!\!\!\!\!\\ 
			\!\!\!\! 1055 \!\!\!\! & \!\!
			\!\!\! 1e+1 \!\!\!\!\! & \!\!\!\!\!\!\!\! 1 \!\!\!\!\!  & \!\!\!\!\!\!  24 \!\!\!\!\!  & \!\!\!\!\!\!  16.78 \!\!\!\!\! 
			& \!\!  -32.666 \!\!\!\!\! & \!\!\!\!\!\!  1.12 \!\!\!\!\!  & \!\!\!\!  
			\!  -37.841 \!\!\! & \!\!\!  -33.037 \!\!\! & \!\!\!\!\!\!  0.00 \!\!\!\!\! & \!\!\!\!
			\!  -199.457 \!\!\! & \!\!\!  -33.037 \!\!\! & \!\!\!\!\!\!  0.00 \!\!\!\!\!\!\!\!\!\\ 
			\!\!\!\! 1143 \!\!\!\! & \!\!
			\!\!\! 2e+1 \!\!\!\!\! & \!\!\!\!\!\!\!\! 1 \!\!\!\!\!  & \!\!\!\!\!\!  30 \!\!\!\!\!  & \!\!\!\!\!\!  32.66 \!\!\!\!\! 
			& \!\!  -55.507 \!\!\!\!\! & \!\!\!\!\!\!  3.04 \!\!\!\!\!  & \!\!\!\!  
			\!  -69.522 \!\!\! & \!\!\!  -57.247 \!\!\! & \!\!\!\!\!\!  0.00 \!\!\!\!\! & \!\!\!\!
			\!  -384.45 \!\!\! & \!\!\!  -56.237 \!\!\! & \!\!\!\!\!\! 1.76 \!\!\!\!\!\!\!\!\!\\ 
			\!\!\!\! 1157 \!\!\!\! & \!\!
			\!\!\! 2e+0 \!\!\!\!\! & \!\!\!\!\!\!\!\! 1 \!\!\!\!\!  & \!\!\!\!\!\! 0  \!\!\!\!\!  & \!\!\!\!\!\!  1.14 \!\!\!\!\! 
			& \!\!  -10.948 \!\!\!\!\! & \!\!\!\!\!\!  0.00 \!\!\!\!\!  & \!\!\!\!  
			\!  -11.414 \!\!\! & \!\!\!  -10.948 \!\!\! & \!\!\!\!\!\!  0.00 \!\!\!\!\! & \!\!\!\!
			\!  -80.51 \!\!\! & \!\!\!  -10.948 \!\!\! & \!\!\!\!\!\!  0.00 \!\!\!\!\!\!\!\!\!\\ 
			\!\!\!\! 1353 \!\!\!\! & \!\!
			\!\!\! 1e+0 \!\!\!\!\! & \!\!\!\!\!\!\!\! 3 \!\!\!\!\!  & \!\!\!\!\!\!  11 \!\!\!\!\!  & \!\!\!\!\!\!  19.41 \!\!\!\!\! 
			& \!\!  -7.711 \!\!\!\!\! & \!\!\!\!\!\!  0.05 \!\!\!\!\!  & \!\!\!\!  
			\!  -7.925 \!\!\! & \!\!\!  -7.714 \!\!\! & \!\!\!\!\!\!  0.00 \!\!\!\!\! & \!\!\!\!
			\!  -73.28 \!\!\! & \!\!\!  -7.714 \!\!\! & \!\!\!\!\!\!  0.00 \!\!\!\!\!\!\!\!\!\\ 
			\!\!\!\! 1423 \!\!\!\! & \!\!
			\!\!\! 2e+0 \!\!\!\!\! & \!\!\!\!\!\!\!\! 3 \!\!\!\!\!  & \!\!\!\!\!\!  14 \!\!\!\!\!  & \!\!\!\!\!\!   16.41 \!\!\!\!\! 
			& \!\!  -14.730 \!\!\!\!\! & \!\!\!\!\!\!  1.59 \!\!\!\!\!  & \!\!\!\!  
			\!  -16.313 \!\!\! & \!\!\!  -14.968 \!\!\! & \!\!\!\!\!\!  0.00 \!\!\!\!\! & \!\!\!\!
			\!  -76.13 \!\!\! & \!\!\!  -14.871 \!\!\! & \!\!\!\!\!\!  0.65 \!\!\!\!\!\!\!\!\!\\ 
			\!\!\!\! 1437 \!\!\!\! & \!\!
			\!\!\! 5e-1 \!\!\!\!\! & \!\!\!\!\!\!\!\! 4 \!\!\!\!\!  & \!\!\!\!\!\!  8  \!\!\!\!\!  & \!\!\!\!\!\!  21.62  \!\!\!\!\! 
			& \!\!  -7.788 \!\!\!\!\! & \!\!\!\!\!\!  0.02 \!\!\!\!\!  & \!\!\!\!  
			\!  -9.601 \!\!\! & \!\!\! -7.789 \!\!\! & \!\!\!\!\!\!  0.00 \!\!\!\!\! & \!\!\!\!
			\!  -87.58 \!\!\! & \!\!\!  -7.789 \!\!\! & \!\!\!\!\!\! 0.00 \!\!\!\!\!\!\!\!\!\\ 
			\!\!\!\! 1451 \!\!\!\! & \!\!
			\!\!\! 2e+1 \!\!\!\!\! & \!\!\!\!\!\!\!\! 2 \!\!\!\!\!  & \!\!\!\!\!\!  36 \!\!\!\!\!  & \!\!\!\!\!\!  100.50 \!\!\!\!\! 
			& \!\!  -87.502 \!\!\!\!\! & \!\!\!\!\!\!  0.09 \!\!\!\!\!  & \!\!\!\!  
			\! -135.140 \!\!\! & \!\!\! -87.577 \!\!\! & \!\!\!\!\!\!  0.00 \!\!\!\!\! & \!\!\!\!
			\! -468.04 \!\!\! & \!\!\!  -87.283  \!\!\! & \!\!\!\!\!\!   0.34 \!\!\!\!\!\!\!\!\!\\ 
			\!\!\!\! 1493 \!\!\!\! & \!\!
			\!\!\! 1e+1 \!\!\!\!\! & \!\!\!\!\!\!\!\! 3 \!\!\!\!\!  & \!\!\!\!\!\!  13 \!\!\!\!\!  & \!\!\!\!\!\!  13.69  \!\!\!\!\! 
			& \!\!  -41.804 \!\!\!\!\! & \!\!\!\!\!\!  3.14 \!\!\!\!\!  & \!\!\!\!  
			\!  -47.239 \!\!\! & \!\!\! -43.160 \!\!\! & \!\!\!\!\!\!  0.00 \!\!\!\!\! & \!\!\!\!
			\!  -395.69 \!\!\! & \!\!\! -43.160 \!\!\! & \!\!\!\!\!\!  0.00 \!\!\!\!\!\!\!\!\!\\ 
			\!\!\!\! 1507 \!\!\!\! & \!\!
			\!\!\! 1e+0 \!\!\!\!\! & \!\!\!\!\!\!\!\! 6 \!\!\!\!\!  & \!\!\!\!\!\!  13 \!\!\!\!\!  & \!\!\!\!\!\!  10.31  \!\!\!\!\! 
			& \!\!  -8.295 \!\!\!\!\! & \!\!\!\!\!\!  0.08 \!\!\!\!\!  & \!\!\!\!  
			\!  -49.709 \!\!\! & \!\!\!  -8.301 \!\!\! & \!\!\!\!\!\!  0.00 \!\!\!\!\! & \!\!\!\!
			\!  -44.37 \!\!\! & \!\!\!  -8.301 \!\!\! & \!\!\!\!\!\!  0.00 \!\!\!\!\!\!\!\!\!\\ 
			\!\!\!\! 1535 \!\!\!\! & \!\!
			\!\!\! 2e+0 \!\!\!\!\! & \!\!\!\!\!\!\!\! 3 \!\!\!\!\!  & \!\!\!\!\!\!  23 \!\!\!\!\!  & \!\!\!\!\!\!  83.47 \!\!\!\!\! 
			& \!\!  -11.241 \!\!\!\!\! & \!\!\!\!\!\!  2.98 \!\!\!\!\!  & \!\!\!\!  
			\!  -13.407 \!\!\! & \!\!\! -11.586 \!\!\! & \!\!\!\!\!\!  0.00 \!\!\!\!\! & \!\!\!\!
			\!  -107.86 \!\!\! & \!\!\! -11.398 \!\!\! & \!\!\!\!\!\!  1.62 \!\!\!\!\!\!\!\!\!\\ 
			\!\!\!\! 1619 \!\!\!\! & \!\!
			\!\!\! 2e+0 \!\!\!\!\! & \!\!\!\!\!\!\!\! 3 \!\!\!\!\!  & \!\!\!\!\!\!  20 \!\!\!\!\!  & \!\!\!\!\!\!  35.62 \!\!\!\!\! 
			& \!\!  -9.213 \!\!\!\!\! & \!\!\!\!\!\!  0.05 \!\!\!\!\!  & \!\!\!\!  
			\!  -10.302 \!\!\! & \!\!\! -9.217 \!\!\! & \!\!\!\!\!\!  0.00 \!\!\!\!\! & \!\!\!\!
			\!  -74.55 \!\!\! & \!\!\!  -9.217 \!\!\! & \!\!\!\!\!\!  0.00 \!\!\!\!\!\!\!\!\!\\ 
			\!\!\!\! 1661 \!\!\!\! & \!\!
			\!\!\! 1e+0 \!\!\!\!\! & \!\!\!\!\!\!\!\! 3 \!\!\!\!\!  & \!\!\!\!\!\!  8  \!\!\!\!\!  & \!\!\!\!\!\!  35.85 \!\!\!\!\! 
			& \!\!  -15.666 \!\!\!\!\! & \!\!\!\!\!\!  1.81 \!\!\!\!\!  & \!\!\!\!  
			\!  -19.667 \!\!\! & \!\!\!  -15.955 \!\!\! & \!\!\!\!\!\!  0.00 \!\!\!\!\! & \!\!\!\!
			\!  -139.25 \!\!\! & \!\!\!  -15.955 \!\!\! & \!\!\!\!\!\!  0.00 \!\!\!\!\!\!\!\!\!\\ 
			\!\!\!\! 1675 \!\!\!\! & \!\!
			\!\!\! 1e+1 \!\!\!\!\! & \!\!\!\!\!\!\!\! 3 \!\!\!\!\!  & \!\!\!\!\!\!  11 \!\!\!\!\!  & \!\!\!\!\!\!  41.30 \!\!\!\!\! 
			& \!\!  -75.537 \!\!\!\!\! & \!\!\!\!\!\!  0.17 \!\!\!\!\!  & \!\!\!\!  
			\! -96.864 \!\!\! & \!\!\! -75.669 \!\!\! & \!\!\!\!\!\!  0.00 \!\!\!\!\! & \!\!\!\!
			\! -435.48 \!\!\! & \!\!\!  -75.669 \!\!\! & \!\!\!\!\!\!  0.00 \!\!\!\!\!\!\!\!\!\\ 
			\!\!\!\! 1703 \!\!\!\! & \!\!
			\!\!\! 2e+1 \!\!\!\!\! & \!\!\!\!\!\!\!\! 5 \!\!\!\!\!  & \!\!\!\!\!\!  22 \!\!\!\!\!  & \!\!\!\!\!\!  62.63 \!\!\!\!\! 
			& \!\!  -131.330 \!\!\!\!\! & \!\!\!\!\!\!  1.11 \!\!\!\!\!  & \!\!\!\!  
			\!  -180.935 \!\!\! & \!\!\!  -132.802 \!\!\! & \!\!\!\!\!\!  0.00 \!\!\!\!\! & \!\!\!\!
			\!  -929.92 \!\!\! & \!\!\!  -132.802 \!\!\! & \!\!\!\!\!\!  0.00 \!\!\!\!\!\!\!\!\!\\ 
			\!\!\!\! 1745 \!\!\!\! & \!\!
			\!\!\! 5e+0 \!\!\!\!\! & \!\!\!\!\!\!\!\! 4 \!\!\!\!\!  & \!\!\!\!\!\!  19 \!\!\!\!\!  & \!\!\!\!\!\!  40.44 \!\!\!\!\! 
			& \!\!  -72.351 \!\!\!\!\! & \!\!\!\!\!\!  0.04 \!\!\!\!\!  & \!\!\!\!  
			\!  -77.465 \!\!\! & \!\!\!  -72.377 \!\!\! & \!\!\!\!\!\!  0.00 \!\!\!\!\! & \!\!\!\!
			\!  -317.99 \!\!\! & \!\!\!  -72.377 \!\!\! & \!\!\!\!\!\!  0.00 \!\!\!\!\!\!\!\!\!\\ 
			\!\!\!\! 1773 \!\!\!\! & \!\!
			\!\!\! 5e+0 \!\!\!\!\! & \!\!\!\!\!\!\!\! 1 \!\!\!\!\!  & \!\!\!\!\!\!  56 \!\!\!\!\!  & \!\!\!\!\!\!   120.65 \!\!\!\!\! 
			& \!\!  -14.176 \!\!\!\!\! & \!\!\!\!\!\!  3.19 \!\!\!\!\!  & \!\!\!\!  
			\!  -21.581 \!\!\! & \!\!\! -14.642 \!\!\! & \!\!\!\!\!\!  0.00 \!\!\!\!\! & \!\!\!\!
			\!  -118.65 \!\!\! & \!\!\! -14.642 \!\!\! & \!\!\!\!\!\!  0.00 \!\!\!\!\!\!\!\!\!\\ 
			\!\!\!\! 1886 \!\!\!\! & \!\!
			\!\!\! 2e+1 \!\!\!\!\! & \!\!\!\!\!\!\!\! 1 \!\!\!\!\!  & \!\!\!\!\!\!  35 \!\!\!\!\!  & \!\!\!\!\!\!  28.19 \!\!\!\!\! 
			& \!\!  -78.620 \!\!\!\!\! & \!\!\!\!\!\!  0.07 \!\!\!\!\!  & \!\!\!\!  
			\!  -135.615 \!\!\! & \!\!\!  -78.672 \!\!\! & \!\!\!\!\!\!  0.00 \!\!\!\!\! & \!\!\!\!
			\!  -324.87 \!\!\! & \!\!\!  -78.672 \!\!\! & \!\!\!\!\!\!  0.00 \!\!\!\!\!\!\!\!\!\\ 
			\!\!\!\! 1913 \!\!\!\! & \!\!
			\!\!\! 5e+0 \!\!\!\!\! & \!\!\!\!\!\!\!\! 4 \!\!\!\!\!  & \!\!\!\!\!\!  18 \!\!\!\!\!  & \!\!\!\!\!\!  15.10 \!\!\!\!\! 
			& \!\!  -51.879 \!\!\!\!\! & \!\!\!\!\!\!  0.44 \!\!\!\!\!  & \!\!\!\!  
			\!  -68.555 \!\!\! & \!\!\!  -52.109 \!\!\! & \!\!\!\!\!\!  0.00 \!\!\!\!\! & \!\!\!\!
			\!  -164.26 \!\!\! & \!\!\!  -51.348 \!\!\! & \!\!\!\!\!\!  1.46 \!\!\!\!\!\!\!\!\!\\ 
			\!\!\!\! 1922 \!\!\!\! & \!\!
			\!\!\! 1e+1 \!\!\!\!\! & \!\!\!\!\!\!\!\! 1 \!\!\!\!\!  & \!\!\!\!\!\!  26 \!\!\!\!\!  & \!\!\!\!\!\!  13.22 \!\!\!\!\! 
			& \!\!  -35.451 \!\!\!\!\! & \!\!\!\!\!\!  1.39 \!\!\!\!\!  & \!\!\!\!  
			\!  -121.872 \!\!\! & \!\!\!  -35.951 \!\!\! & \!\!\!\!\!\!  0.00 \!\!\!\!\! & \!\!\!\!
			\!  -123.2 \!\!\! & \!\!\!  -35.951 \!\!\! & \!\!\!\!\!\!  0.00 \!\!\!\!\!\!\!\!\!\\ 
			\!\!\!\! 1931 \!\!\!\! & \!\!
			\!\!\! 1e+1 \!\!\!\!\! & \!\!\!\!\!\!\!\! 1 \!\!\!\!\!  & \!\!\!\!\!\!  13 \!\!\!\!\!  & \!\!\!\!\!\!   8.59 \!\!\!\!\! 
			& \!\!  -53.709 \!\!\!\!\! & \!\!\!\!\!\!  3.59 \!\!\!\!\!  & \!\!\!\!  
			\!  -85.196 \!\!\! & \!\!\!  -55.709 \!\!\! & \!\!\!\!\!\!  0.00 \!\!\!\!\! & \!\!\!\!
			\!  -204.08 \!\!\! & \!\!\!  -54.290 \!\!\! & \!\!\!\!\!\!  2.55 \!\!\!\!\!\!\!\!\!\\ 
			\!\!\!\! 1967 \!\!\!\! & \!\!
			\!\!\! 5e+1 \!\!\!\!\! & \!\!\!\!\!\!\!\! 1 \!\!\!\!\!  & \!\!\!\!\!\!  38 \!\!\!\!\!  & \!\!\!\!\!\!  33.01 \!\!\!\!\! 
			& \!\!  -105.616 \!\!\!\!\! & \!\!\!\!\!\!  1.83 \!\!\!\!\!  & \!\!\!\!  
			\!  -136.098 \!\!\! & \!\!\!  0.000 \!\!\! & \!\!\!\!\!\!  100 \!\!\!\!\! & \!\!\!\!
			\!  -622.57 \!\!\! & \!\!\! -107.581 \!\!\! & \!\!\!\!\!\!  0.00 \!\!\!\!\!\!\!\!\!\\ 
			\hline
			\!\!\!\! Max \!\!\!\! & \!\!
			\!\!\! 100 \!\!\!\!\! & \!\!\!\!\!\!\!\! 6 \!\!\!\!\!  & \!\!\!\!\!\!  56 \!\!\!\!\!  & \!\!\!\!\!\! 120.65 \!\!\!\!\! 
			& \!\!   \!\!\!\!\! & \!\!\!\!\!\!  13.17 \!\!\!\!\! &  \!\!\!\!    
			\!    \!\!\! & \!\!\!    \!\!\! & \!\!\!\!\!\!  100 \!\!\!\!\! & \!\!\!\!
			\!    \!\!\! & \!\!\!    \!\!\! & \!\!\!\!\!\!  3.49 \!\!\!\!\!\!\!\!\!\\ 
			\hline \hline
	\end{tabular}}
	\label{tab:Seq_SOCP_RLT}
\end{table}

\begin{table}[t]
	\centering
	\caption{\small Sequential penalized SDP.}
	\scalebox{0.70}{\begin{tabular}{c|cccccc|ccc|ccc}
			\hline \hline
			\!\!\!\multirow{2}{*}{Inst}\!\!\! & 
			\multicolumn{6}{c|}{\bf Sequential SDP} & 
			\multicolumn{3}{c|}{\bf BARON} & 
			\multicolumn{3}{c}{\bf COUENNE} \\
			\cline{2-13}
			& $\eta$ & $i^{\sm{\mathrm{feas}}{6}}$ & $i^{\sm{\mathrm{stop}}{6}}$ & $t$(s) 
			& UB & GAP(\%)\!\!\! &  
			LB & UB & GAP(\%)\!\!\! & LB & UB & GAP(\%)\!\!\! \\
			\hline
			\hline
			\!\!\!\! 0343$^\ast$ \!\!\!\! & \!\!
			\!\!\! 1e+2 \!\!\!\!\! & \!\!\!\!\!\!\!\! 1 \!\!\!\!\!  & \!\!\!\!\!\!  53 \!\!\!\!\!  & \!\!\!\!\!\!  29.24 \!\!\!\!\! 
			& \!\!  -6.379 \!\!\!\!\! & \!\!\!\!\!\!  0.12 \!\!\!\!\! &  \!\!\!\!  
			\!  -95.372 \!\!\! & \!\!\!  -6.386 \!\!\! & \!\!\!\!\!\!  0.00 \!\!\!\!\! & \!\!\!\!
			\!  -7668.005 \!\!\! & \!\!\!  -6.386 \!\!\! & \!\!\!\!\!\!  0.00 \!\!\!\!\!\!\!\!\!\\ 
			\!\!\!\! 0911 \!\!\!\! & \!\!
			\!\!\! 2e+0 \!\!\!\!\! & \!\!\!\!\!\!\!\! 1 \!\!\!\!\!  & \!\!\!\!\!\!  9 \!\!\!\!\!  & \!\!\!\!\!\!  5.19 \!\!\!\!\! 
			&\!\!  -31.811 \!\!\!\!\! & \!\!\!\!\!\!  1.05 \!\!\!\!\! &  \!\!\!\!  
			\!  -172.777 \!\!\! & \!\!\!  0.000 \!\!\! & \!\!\!\!\!\!  100 \!\!\!\!\! & \!\!\!\!
			\!  -172.777 \!\!\! & \!\!\!  -31.026 \!\!\! & \!\!\!\!\!\!  3.49 \!\!\!\!\!\!\!\!\!\\ 
			\!\!\!\! 0975 \!\!\!\! & \!\!
			\!\!\! 2e+0 \!\!\!\!\! & \!\!\!\!\!\!\!\! 2 \!\!\!\!\!  & \!\!\!\!\!\!  13 \!\!\!\!\!  & \!\!\!\!\!\!  8.18 \!\!\!\!\! 
			&\!\!  -37.845 \!\!\!\!\! & \!\!\!\!\!\!  0.02 \!\!\!\!\! &  \!\!\!\! 
			\!  -47.428 \!\!\! & \!\!\!  -37.794 \!\!\! & \!\!\!\!\!\!  0.16 \!\!\!\!\! & \!\!\!\!
			\!  -171.113 \!\!\! & \!\!\!  -36.812 \!\!\! & \!\!\!\!\!\!  2.75 \!\!\!\!\!\!\!\!\!\\ 
			\!\!\!\! 1055 \!\!\!\! & \!\!
			\!\!\! 5e+0 \!\!\!\!\! & \!\!\!\!\!\!\!\! 1 \!\!\!\!\!  & \!\!\!\!\!\!  8  \!\!\!\!\!  & \!\!\!\!\!\!  4.36 \!\!\!\!\! 
			&\!\!  -32.528 \!\!\!\!\! & \!\!\!\!\!\!  1.54 \!\!\!\!\! &  \!\!\!\! 
			\!  -37.841 \!\!\! & \!\!\!  -33.037 \!\!\! & \!\!\!\!\!\!  0.00 \!\!\!\!\! & \!\!\!\!
			\!  -199.457 \!\!\! & \!\!\!  -33.037 \!\!\! & \!\!\!\!\!\!  0.00 \!\!\!\!\!\!\!\!\!\\ 
			\!\!\!\! 1143 \!\!\!\! & \!\!
			\!\!\! 5e+0 \!\!\!\!\! & \!\!\!\!\!\!\!\! 4 \!\!\!\!\!  & \!\!\!\!\!\!  15 \!\!\!\!\!  & \!\!\!\!\!\!  7.89 \!\!\!\!\! 
			&\!\!  -55.606 \!\!\!\!\! & \!\!\!\!\!\!  2.87 \!\!\!\!\! &  \!\!\!\!  
			\!  -69.522 \!\!\! & \!\!\!  -57.247 \!\!\! & \!\!\!\!\!\!  0.00 \!\!\!\!\! & \!\!\!\!
			\!  -384.45 \!\!\! & \!\!\!  -53.367 \!\!\! & \!\!\!\!\!\!  6.78 \!\!\!\!\!\!\!\!\!\\ 
			\!\!\!\! 1157 \!\!\!\! & \!\!
			\!\!\! 1e+0 \!\!\!\!\! & \!\!\!\!\!\!\!\! 1 \!\!\!\!\!  & \!\!\!\!\!\!  5  \!\!\!\!\!  & \!\!\!\!\!\!  3.15 \!\!\!\!\! 
			&\!\!  -10.945 \!\!\!\!\! & \!\!\!\!\!\!  0.03 \!\!\!\!\! &  \!\!\!\!  
			\!  -11.414 \!\!\! & \!\!\!  -10.948 \!\!\! & \!\!\!\!\!\!  0.00 \!\!\!\!\! & \!\!\!\!
			\!  -80.51 \!\!\! & \!\!\!  -10.948 \!\!\! & \!\!\!\!\!\!  0.00 \!\!\!\!\!\!\!\!\!\\ 
			\!\!\!\! 1353$^\ast$ \!\!\!\! & \!\!
			\!\!\! 1e+0 \!\!\!\!\! & \!\!\!\!\!\!\!\! 1 \!\!\!\!\!  & \!\!\!\!\!\!  10 \!\!\!\!\!  & \!\!\!\!\!\!  6.12 \!\!\!\!\! 
			&\!\!  -7.712 \!\!\!\!\! & \!\!\!\!\!\!  0.03 \!\!\!\!\! &  \!\!\!\!  
			\!  -7.925 \!\!\! & \!\!\!  -7.714 \!\!\! & \!\!\!\!\!\!  0.00 \!\!\!\!\! & \!\!\!\!
			\!  -73.28 \!\!\! & \!\!\!  -7.714 \!\!\! & \!\!\!\!\!\!  0.00 \!\!\!\!\!\!\!\!\!\\ 
			\!\!\!\! 1423$^\ast$ \!\!\!\! & \!\!
			\!\!\! 1e+0 \!\!\!\!\! & \!\!\!\!\!\!\!\! 1 \!\!\!\!\!  & \!\!\!\!\!\!  5  \!\!\!\!\!  & \!\!\!\!\!\!  3.28 \!\!\!\!\! 
			&\!\!  -14.676 \!\!\!\!\! & \!\!\!\!\!\!  1.95 \!\!\!\!\! &  \!\!\!\!  
			\!  -16.313 \!\!\! & \!\!\! -14.968 \!\!\! & \!\!\!\!\!\!  0.00 \!\!\!\!\! & \!\!\!\!
			\!  -76.13 \!\!\! & \!\!\!  -14.078 \!\!\! & \!\!\!\!\!\!  5.94 \!\!\!\!\!\!\!\!\!\\ 
			\!\!\!\! 1437$^\ast$ \!\!\!\! & \!\!
			\!\!\! 1e+0 \!\!\!\!\! & \!\!\!\!\!\!\!\! 1 \!\!\!\!\!  & \!\!\!\!\!\!  7  \!\!\!\!\!  & \!\!\!\!\!\!  4.30 \!\!\!\!\!
			&\!\!  -7.787 \!\!\!\!\! & \!\!\!\!\!\!  0.03 \!\!\!\!\! &  
			\!\!\!\!  
			\!  -9.601 \!\!\! & \!\!\!  -7.789 \!\!\! & \!\!\!\!\!\!  0.00 \!\!\!\!\! & \!\!\!\!
			\!  -87.58 \!\!\! & \!\!\!  -7.789 \!\!\! & \!\!\!\!\!\!  0.00 \!\!\!\!\!\!\!\!\!\\ 
			\!\!\!\! 1451$^\dagger$ \!\!\!\! & \!\!
			\!\!\! 5e+0 \!\!\!\!\! & \!\!\!\!\!\!\!\! 2 \!\!\!\!\!  & \!\!\!\!\!\!  6 \!\!\!\!\!  & \!\!\!\!\!\! 5.09 \!\!\!\!\! 
			&\!\!  -85.972 \!\!\!\!\! & \!\!\!\!\!\!  1.83 \!\!\!\!\! &  \!\!\!\!  
			\!  -135.140 \!\!\! & \!\!\!  - \!\!\! & \!\!\!\!\!\!  - \!\!\!\!\! & \!\!\!\!
			\!  -468.04 \!\!\! & \!\!\!  - \!\!\! & \!\!\!\!\!\!  - \!\!\!\!\!\!\!\!\!\\ 
			\!\!\!\! 1493$^\ast$ \!\!\!\! & \!\!
			\!\!\! 5e+0 \!\!\!\!\! & \!\!\!\!\!\!\!\! 1 \!\!\!\!\!  & \!\!\!\!\!\!  6  \!\!\!\!\!  & \!\!\!\!\!\!  4.10 \!\!\!\!\! 
			& \!\!  -43.160 \!\!\!\!\! & \!\!\!\!\!\!  0.00 \!\!\!\!\! &  \!\!\!\!  
			\!  -47.239 \!\!\! & \!\!\!  -43.160 \!\!\! & \!\!\!\!\!\!  0.00 \!\!\!\!\! & \!\!\!\!
			\!  -395.69 \!\!\! & \!\!\!  -43.160 \!\!\! & \!\!\!\!\!\!  0.00 \!\!\!\!\!\!\!\!\!\\ 
			\!\!\!\! 1507 \!\!\!\! & \!\!
			\!\!\! 5e-1 \!\!\!\!\! & \!\!\!\!\!\!\!\! 3 \!\!\!\!\!  & \!\!\!\!\!\!  6  \!\!\!\!\!  & \!\!\!\!\!\!  3.28 \!\!\!\!\! 
			&\!\!  -8.291 \!\!\!\!\! & \!\!\!\!\!\!  0.12 \!\!\!\!\! & 
			\!\!\!\!
			\!  -49.709 \!\!\! & \!\!\!  -8.301 \!\!\! & \!\!\!\!\!\!  0.00 \!\!\!\!\! & \!\!\!\!
			\!  -44.37 \!\!\! & \!\!\!  -8.301 \!\!\! & \!\!\!\!\!\!  0.00 \!\!\!\!\!\!\!\!\!\\ 
			\!\!\!\! 1535 \!\!\!\! & \!\!
			\!\!\! 1e+0 \!\!\!\!\! & \!\!\!\!\!\!\!\! 1 \!\!\!\!\!  & \!\!\!\!\!\!  16 \!\!\!\!\!  & \!\!\!\!\!\!  13.05 \!\!\!\!\!
			& \!\! -11.363 \!\!\!\!\! & \!\!\!\!\!\!  1.93 \!\!\!\!\! &  \!\!\!\!
			\!  -13.407 \!\!\! & \!\!\!  -11.397 \!\!\! & \!\!\!\!\!\!  1.63 \!\!\!\!\! & \!\!\!\!
			\!  -107.86 \!\!\! & \!\!\!  -11.398 \!\!\! & \!\!\!\!\!\!  1.63 \!\!\!\!\!\!\!\!\!\\ 
			\!\!\!\! 1619$^\ast$ \!\!\!\! & \!\!
			\!\!\! 1e+0 \!\!\!\!\! & \!\!\!\!\!\!\!\! 1 \!\!\!\!\!  & \!\!\!\!\!\!  7 \!\!\!\!\!  & \!\!\!\!\!\!  4.64  \!\!\!\!\! 
			&\!\!  -9.213 \!\!\!\!\! & \!\!\!\!\!\!  0.05 \!\!\!\!\! &  
			\!\!\!\!
			\!  -10.302 \!\!\! & \!\!\!  -9.217 \!\!\! & \!\!\!\!\!\!  0.00 \!\!\!\!\! & \!\!\!\!
			\!  -74.55 \!\!\! & \!\!\!  -9.217 \!\!\! & \!\!\!\!\!\!  0.00 \!\!\!\!\!\!\!\!\!\\ 
			\!\!\!\! 1661$^\ast$ \!\!\!\! & \!\!
			\!\!\! 1e+0 \!\!\!\!\! & \!\!\!\!\!\!\!\! 1 \!\!\!\!\!  & \!\!\!\!\!\!  12 \!\!\!\!\!  & \!\!\!\!\!\!  7.57 \!\!\!\!\! 
			&\!\!  -15.955 \!\!\!\!\! & \!\!\!\!\!\!  0.00 \!\!\!\!\! &  \!\!\!\!
			\!  -19.667 \!\!\! & \!\!\! -15.955 \!\!\! & \!\!\!\!\!\!  0.00 \!\!\!\!\! & \!\!\!\!
			\!  -139.25 \!\!\! & \!\!\!  -15.955 \!\!\! & \!\!\!\!\!\!  0.00 \!\!\!\!\!\!\!\!\!\\ 
			\!\!\!\! 1675$^\ast$ \!\!\!\! & \!\!
			\!\!\! 5e+0 \!\!\!\!\! & \!\!\!\!\!\!\!\! 1 \!\!\!\!\!  & \!\!\!\!\!\!  5  \!\!\!\!\!  & \!\!\!\!\!\!  3.75 \!\!\!\!\! 
			&\!\!  -75.550 \!\!\!\!\! & \!\!\!\!\!\!  0.16 \!\!\!\!\! & 
			\!\!\!\!
			\!  -96.864 \!\!\! & \!\!\!  -75.669 \!\!\! & \!\!\!\!\!\!  0.00 \!\!\!\!\! & \!\!\!\!
			\!  -435.48 \!\!\! & \!\!\!  -75.669 \!\!\! & \!\!\!\!\!\!  0.00 \!\!\!\!\!\!\!\!\!\\ 
			\!\!\!\! 1703$^\dagger$ \!\!\!\! & \!\!
			\!\!\! 1e+1 \!\!\!\!\! & \!\!\!\!\!\!\!\! 1 \!\!\!\!\!  & \!\!\!\!\!\!  10 \!\!\!\!\!  & \!\!\!\!\!\!  6.96 \!\!\!\!\! 
			&\!\!  -132.539 \!\!\!\!\! & \!\!\!\!\!\!  0.20 \!\!\!\!\! &  \!\!\!\!
			\!  -180.935 \!\!\! & \!\!\!  -131.466 \!\!\! & \!\!\!\!\!\!  1.01 \!\!\!\!\! & \!\!\!\!
			\!  -929.92 \!\!\! & \!\!\!  - \!\!\! & \!\!\!\!\!\!  - \!\!\!\!\!\!\!\!\!\\ 
			\!\!\!\! 1745 \!\!\!\! & \!\!
			\!\!\! 5e+0 \!\!\!\!\! & \!\!\!\!\!\!\!\! 1 \!\!\!\!\!  & \!\!\!\!\!\!  8  \!\!\!\!\!  & \!\!\!\!\!\!  4.75 \!\!\!\!\! 
			&\!\! -71.828 \!\!\!\!\! & \!\!\!\!\!\!  0.76 \!\!\!\!\! &  
			\!\!\!\!
			\!  -77.465 \!\!\! & \!\!\!  -72.377 \!\!\! & \!\!\!\!\!\!  0.00 \!\!\!\!\! & \!\!\!\!
			\!  -317.99 \!\!\! & \!\!\!  -72.377 \!\!\! & \!\!\!\!\!\!  0.00 \!\!\!\!\!\!\!\!\!\\ 
			\!\!\!\! 1773$^\ast$ \!\!\!\! & \!\!
			\!\!\! 1e+0 \!\!\!\!\! & \!\!\!\!\!\!\!\! 1 \!\!\!\!\!  & \!\!\!\!\!\!  8  \!\!\!\!\!  & \!\!\!\!\!\!  5.44 \!\!\!\!\! 
			&\!\! -14.633 \!\!\!\!\! & \!\!\!\!\!\!  0.06 \!\!\!\!\! &  \!\!\!\! 
			\!  -21.581 \!\!\! & \!\!\!  -14.642 \!\!\! & \!\!\!\!\!\!  0.00 \!\!\!\!\! & \!\!\!\!
			\!  -118.65 \!\!\! & \!\!\!  -14.636 \!\!\! & \!\!\!\!\!\!  0.04 \!\!\!\!\!\!\!\!\!\\ 
			\!\!\!\! 1886 \!\!\!\! & \!\!
			\!\!\! 5e+0 \!\!\!\!\! & \!\!\!\!\!\!\!\! 2 \!\!\!\!\!  & \!\!\!\!\!\!  9 \!\!\!\!\!  & \!\!\!\!\!\!   5.84 \!\!\!\!\! 
			&\!\!  -78.659 \!\!\!\!\! & \!\!\!\!\!\!  0.02 \!\!\!\!\! & 
			\!\!\!\!
			\!  -135.615 \!\!\! & \!\!\!   -49.684 \!\!\! & \!\!\!\!\!\!  36.84 \!\!\!\!\! & \!\!\!\!
			\!  -324.87 \!\!\! & \!\!\!  -78.672 \!\!\! & \!\!\!\!\!\!  0.00 \!\!\!\!\!\!\!\!\!\\ 
			\!\!\!\! 1913 \!\!\!\! & \!\!
			\!\!\! 5e+0 \!\!\!\!\! & \!\!\!\!\!\!\!\! 1 \!\!\!\!\!  & \!\!\!\!\!\!  20 \!\!\!\!\!  & \!\!\!\!\!\!  12.48 \!\!\!\!\!
			&\!\!  -51.866 \!\!\!\!\! & \!\!\!\!\!\!  0.47 \!\!\!\!\! &  \!\!\!\!
			\!  -68.555 \!\!\! & \!\!\!  -52.109 \!\!\! & \!\!\!\!\!\!  0.00 \!\!\!\!\! & \!\!\!\!
			\!  -164.26 \!\!\! & \!\!\!  -51.348 \!\!\! & \!\!\!\!\!\!  1.46 \!\!\!\!\!\!\!\!\!\\ 
			\!\!\!\! 1922$^\ast$ \!\!\!\! & \!\!
			\!\!\! 5e+0 \!\!\!\!\! & \!\!\!\!\!\!\!\! 1 \!\!\!\!\!  & \!\!\!\!\!\!  7 \!\!\!\!\!  & \!\!\!\!\!\!  4.34 \!\!\!\!\! 
			&\!\!  -35.452 \!\!\!\!\! & \!\!\!\!\!\!  1.39 \!\!\!\!\! &  
			\!\!\!\!
			\!  -121.872 \!\!\! & \!\!\! -35.916 \!\!\! & \!\!\!\!\!\!  0.10 \!\!\!\!\! & \!\!\!\!
			\!  -123.2 \!\!\! & \!\!\!  -35.951 \!\!\! & \!\!\!\!\!\!  0.00 \!\!\!\!\!\!\!\!\!\\ 
			\!\!\!\! 1931 \!\!\!\! & \!\!
			\!\!\! 5e+0 \!\!\!\!\! & \!\!\!\!\!\!\!\! 1 \!\!\!\!\!  & \!\!\!\!\!\!  10 \!\!\!\!\!  & \!\!\!\!\!\!  5.87 \!\!\!\!\! 
			&\!\!  -54.894 \!\!\!\!\! & \!\!\!\!\!\!  1.46 \!\!\!\!\! &  \!\!\!\!
			\!  -85.196 \!\!\! & \!\!\!  -55.709 \!\!\! & \!\!\!\!\!\!  0.00 \!\!\!\!\! & \!\!\!\!
			\!  -204.08 \!\!\! & \!\!\!  -54.290 \!\!\! & \!\!\!\!\!\!  2.55 \!\!\!\!\!\!\!\!\!\\ 
			\!\!\!\! 1967 \!\!\!\! & \!\!
			\!\!\! 1e+1 \!\!\!\!\! & \!\!\!\!\!\!\!\! 1 \!\!\!\!\!  & \!\!\!\!\!\!  6 \!\!\!\!\!  & \!\!\!\!\!\!  5.49 \!\!\!\!\! 
			&\!\!  -104.752 \!\!\!\!\! & \!\!\!\!\!\!  2.63 \!\!\!\!\! &  \!\!\!\!  
			\!  -136.098 \!\!\! & \!\!\!  0.000 \!\!\! & \!\!\!\!\!\!  100 \!\!\!\!\! & \!\!\!\!
			\!  -622.57 \!\!\! & \!\!\!  -107.581 \!\!\! & \!\!\!\!\!\!  0.00 \!\!\!\!\!\!\!\!\!\\ 
			\hline
			\!\!\!\! Max \!\!\!\! & \!\!
			\!\!\! 100 \!\!\!\!\! & \!\!\!\!\!\!\!\! 4 \!\!\!\!\!  & \!\!\!\!\!\!  53 \!\!\!\!\!  & \!\!\!\!\!\! 29.24 \!\!\!\!\! 
			& \!\!   \!\!\!\!\! & \!\!\!\!\!\!  2.87 \!\!\!\!\! &  \!\!\!\!    
			\!    \!\!\! & \!\!\!    \!\!\! & \!\!\!\!\!\!  100 \!\!\!\!\! & \!\!\!\!
			\!    \!\!\! & \!\!\!    \!\!\! & \!\!\!\!\!\!  6.78 \!\!\!\!\!\!\!\!\!\\ 
			\hline \hline
	\end{tabular}}
	\begin{flushleft}
		{\scriptsize $^\dagger$ Rows 1451 and 1703 are excluded from 
			maximum computations due to missing entries.}\\
		{\scriptsize $^\ast$ $i^{\mathrm{feas}}=1$ is predicted by Theorem \ref{thm2}.}
	\end{flushleft}
	\label{tab:Seq_SDP}
\end{table}

\begin{table}[t]	
	\centering
	\caption{\small Sequential penalized SDP+RLT.}
	\scalebox{0.70}{\begin{tabular}{c|cccccc|ccc|ccc}
			\hline \hline
			\!\!\!\multirow{2}{*}{Inst}\!\!\! & 
			\multicolumn{6}{c|}{\bf Sequential SDP+RLT} & 
			\multicolumn{3}{c|}{\bf BARON} & 
			\multicolumn{3}{c}{\bf COUENNE} \\
			\cline{2-13}
			& $\eta$ & $i^{\sm{\mathrm{feas}}{6}}$ & $i^{\sm{\mathrm{stop}}{6}}$ & $t$(s) 
			& UB & GAP(\%)\!\!\!&
			LB & UB & GAP(\%)\!\!\! & LB & UB & GAP(\%)\!\!\! \\
			\hline
			\hline
			\!\!\!\! 0343 \!\!\!\! & \!\!
			\!\!\! 0e+0 \!\!\!\!\! & \!\!\!\!\!\!\!\! 0 \!\!\!\!\!  & \!\!\!\!\!\!  0 \!\!\!\!\!  & \!\!\!\!\!\!  1.42 \!\!\!\!\! 
			&\!\!  -6.386 \!\!\!\!\! & \!\!\!\!\!\!  0.00 \!\!\!\!\! &  \!\!\!\!  
			\!  -95.372 \!\!\! & \!\!\!  -6.386 \!\!\! & \!\!\!\!\!\!  0.00 \!\!\!\!\! & \!\!\!\!
			\!  -7668.005 \!\!\! & \!\!\!  -6.386 \!\!\! & \!\!\!\!\!\!  0.00 \!\!\!\!\!\!\!\!\!\\ 
			\!\!\!\! 0911 \!\!\!\! & \!\!
			\!\!\! 2e-1 \!\!\!\!\! & \!\!\!\!\!\!\!\! 4 \!\!\!\!\!  & \!\!\!\!\!\!  5 \!\!\!\!\!  & \!\!\!\!\!\! 13.08 \!\!\!\!\! 
			& \!\!  -32.147 \!\!\!\!\! & \!\!\!\!\!\!  0.00 \!\!\!\!\! &  \!\!\!\!  
			\!  -172.777 \!\!\! & \!\!\!  0.000 \!\!\! & \!\!\!\!\!\!  100 \!\!\!\!\! & \!\!\!\!
			\!  -172.777 \!\!\! & \!\!\!  -31.026 \!\!\! & \!\!\!\!\!\!  3.49 \!\!\!\!\!\!\!\!\!\\ 
			\!\!\!\! 0975 \!\!\!\! & \!\!
			\!\!\! 2e-1 \!\!\!\!\! & \!\!\!\!\!\!\!\! 3 \!\!\!\!\!  & \!\!\!\!\!\!  5 \!\!\!\!\!  & \!\!\!\!\!\!  12.75 \!\!\!\!\! 
			& \!\!  -37.852 \!\!\!\!\! & \!\!\!\!\!\!  0.00 \!\!\!\!\! &  \!\!\!\!  
			\! -47.428 \!\!\! & \!\!\!  -37.794 \!\!\! & \!\!\!\!\!\!  0.16 \!\!\!\!\! & \!\!\!\!
			\! -171.113 \!\!\! & \!\!\!  -36.812 \!\!\! & \!\!\!\!\!\!  2.75 \!\!\!\!\!\!\!\!\!\\ 
			\!\!\!\! 1055 \!\!\!\! & \!\!
			\!\!\! 1e+0 \!\!\!\!\! & \!\!\!\!\!\!\!\! 5 \!\!\!\!\!  & \!\!\!\!\!\!  8 \!\!\!\!\!  & \!\!\!\!\!\!  9.56 \!\!\!\!\! 
			& \!\!  -32.874 \!\!\!\!\! & \!\!\!\!\!\!  0.49 \!\!\!\!\! &  \!\!\!\!  
			\!  -37.841 \!\!\! & \!\!\!  -33.037 \!\!\! & \!\!\!\!\!\!  0.00 \!\!\!\!\! & \!\!\!\!
			\!  -199.457 \!\!\! & \!\!\!  -33.037 \!\!\! & \!\!\!\!\!\!  0.00 \!\!\!\!\!\!\!\!\!\\ 
			\!\!\!\! 1143 \!\!\!\! & \!\!
			\!\!\! 5e-1 \!\!\!\!\! & \!\!\!\!\!\!\!\! 4 \!\!\!\!\!  & \!\!\!\!\!\!  5 \!\!\!\!\!  & \!\!\!\!\!\!   7.27  \!\!\!\!\! 
			& \!\!  -57.241 \!\!\!\!\! & \!\!\!\!\!\!  0.01 \!\!\!\!\! &  \!\!\!\!  
			\!  -69.522 \!\!\! & \!\!\!  -57.247 \!\!\! & \!\!\!\!\!\!  0.00 \!\!\!\!\! & \!\!\!\!
			\!  -384.45 \!\!\! & \!\!\!  -53.367 \!\!\! & \!\!\!\!\!\!  6.78 \!\!\!\!\!\!\!\!\!\\ 
			\!\!\!\! 1157 \!\!\!\! & \!\!
			\!\!\! 0e+0 \!\!\!\!\! & \!\!\!\!\!\!\!\! 0 \!\!\!\!\!  & \!\!\!\!\!\!  0 \!\!\!\!\!  & \!\!\!\!\!\!  0.88 \!\!\!\!\! 
			&\!\!  -10.948 \!\!\!\!\! & \!\!\!\!\!\!  0.00 \!\!\!\!\! &  \!\!\!\!  
			\!  -11.414 \!\!\! & \!\!\!  -10.948 \!\!\! & \!\!\!\!\!\!  0.00 \!\!\!\!\! & \!\!\!\!
			\!  -80.51 \!\!\! & \!\!\!  -10.948 \!\!\! & \!\!\!\!\!\!  0.00 \!\!\!\!\!\!\!\!\!\\ 
			\!\!\!\! 1353 \!\!\!\! & \!\!
			\!\!\! 0e+0 \!\!\!\!\! & \!\!\!\!\!\!\!\! 0 \!\!\!\!\!  & \!\!\!\!\!\!  0 \!\!\!\!\!  & \!\!\!\!\!\!  0.45 \!\!\!\!\! 
			&\!\!  -7.714 \!\!\!\!\! & \!\!\!\!\!\!  0.00 \!\!\!\!\! &  \!\!\!\!  
			\!  -7.925 \!\!\! & \!\!\!  -7.714 \!\!\! & \!\!\!\!\!\!  0.00 \!\!\!\!\! & \!\!\!\!
			\!  -73.28 \!\!\! & \!\!\!  -7.714 \!\!\! & \!\!\!\!\!\!  0.00 \!\!\!\!\!\!\!\!\!\\ 
			\!\!\!\! 1423 \!\!\!\! & \!\!
			\!\!\! 2e-1 \!\!\!\!\! & \!\!\!\!\!\!\!\! 1 \!\!\!\!\!  & \!\!\!\!\!\!  2 \!\!\!\!\!  & \!\!\!\!\!\!  2.82 \!\!\!\!\! 
			&\!\!  -14.929 \!\!\!\!\! & \!\!\!\!\!\!  0.25 \!\!\!\!\! &  \!\!\!\!  
			\!  -16.313 \!\!\! & \!\!\!  -14.968 \!\!\! & \!\!\!\!\!\!  0.00 \!\!\!\!\! & \!\!\!\!
			\!  -76.13 \!\!\! & \!\!\!  -14.078 \!\!\! & \!\!\!\!\!\!  5.94 \!\!\!\!\!\!\!\!\!\\ 
			\!\!\!\! 1437 \!\!\!\! & \!\!
			\!\!\! 1e-2 \!\!\!\!\! & \!\!\!\!\!\!\!\! 1 \!\!\!\!\!  & \!\!\!\!\!\!  2 \!\!\!\!\!  & \!\!\!\!\!\!  7.02 \!\!\!\!\! 
			&\!\!  -7.789 \!\!\!\!\! & \!\!\!\!\!\!  0.00 \!\!\!\!\! & \!\!\!\!  
			\!  -9.601 \!\!\! & \!\!\!  -7.789 \!\!\! & \!\!\!\!\!\!  0.00 \!\!\!\!\! & \!\!\!\!
			\!  -87.58 \!\!\! & \!\!\!  -7.789 \!\!\! & \!\!\!\!\!\!  0.00 \!\!\!\!\!\!\!\!\!\\ 
			\!\!\!\! 1451 \!\!\!\! & \!\!
			\!\!\! 2e+0 \!\!\!\!\! & \!\!\!\!\!\!\!\! 2 \!\!\!\!\!  & \!\!\!\!\!\!  5 \!\!\!\!\!  & \!\!\!\!\!\!  24.45 \!\!\!\!\! 
			&\!\!  -87.573 \!\!\!\!\! & \!\!\!\!\!\!  0.01 \!\!\!\!\! &  \!\!\!\!  
			\!  -135.140 \!\!\! & \!\!\!  -87.577 \!\!\! & \!\!\!\!\!\!  0.00 \!\!\!\!\! & \!\!\!\!
			\!  -468.04 \!\!\! & \!\!\!  -86.860  \!\!\! & \!\!\!\!\!\!  0.82 \!\!\!\!\!\!\!\!\!\\ 
			\!\!\!\! 1493 \!\!\!\! & \!\!
			\!\!\! 5e-1 \!\!\!\!\! & \!\!\!\!\!\!\!\! 1 \!\!\!\!\!  & \!\!\!\!\!\!  2 \!\!\!\!\!  & \!\!\!\!\!\!  2.76  \!\!\!\!\! 
			&\!\!  -43.160 \!\!\!\!\! & \!\!\!\!\!\!  0.00 \!\!\!\!\! &  \!\!\!\!  
			\!  -47.239 \!\!\! & \!\!\!  -43.160 \!\!\! & \!\!\!\!\!\!  0.00 \!\!\!\!\! & \!\!\!\!
			\!  -395.69 \!\!\! & \!\!\!  -43.160 \!\!\! & \!\!\!\!\!\!  0.00 \!\!\!\!\!\!\!\!\!\\ 
			\!\!\!\! 1507 \!\!\!\! & \!\!
			\!\!\! 0e+0 \!\!\!\!\! & \!\!\!\!\!\!\!\! 0 \!\!\!\!\!  & \!\!\!\!\!\!  0 \!\!\!\!\!  & \!\!\!\!\!\!  0.61 \!\!\!\!\! 
			&\!\!  -8.301 \!\!\!\!\! & \!\!\!\!\!\!  0.00 \!\!\!\!\! &  \!\!\!\!  
			\!  -49.709 \!\!\! & \!\!\!  -8.301 \!\!\! & \!\!\!\!\!\!  0.00 \!\!\!\!\! & \!\!\!\!
			\!  -44.37 \!\!\! & \!\!\!  -8.301 \!\!\! & \!\!\!\!\!\!  0.00 \!\!\!\!\!\!\!\!\!\\ 
			\!\!\!\! 1535 \!\!\!\! & \!\!
			\!\!\! 5e-1 \!\!\!\!\! & \!\!\!\!\!\!\!\! 1 \!\!\!\!\!  & \!\!\!\!\!\!  10 \!\!\!\!\!  & \!\!\!\!\!\! 38.01 \!\!\!\!\! 
			&\!\!  -11.536 \!\!\!\!\! & \!\!\!\!\!\!  0.43 \!\!\!\!\! &  \!\!\!\!  
			\!  -13.407 \!\!\! & \!\!\!  -11.397 \!\!\! & \!\!\!\!\!\!  1.63 \!\!\!\!\! & \!\!\!\!
			\!  -107.86 \!\!\! & \!\!\!  -11.398 \!\!\! & \!\!\!\!\!\!  1.62 \!\!\!\!\!\!\!\!\!\\ 
			\!\!\!\! 1619 \!\!\!\! & \!\!
			\!\!\! 0e+0 \!\!\!\!\! & \!\!\!\!\!\!\!\! 0 \!\!\!\!\!  & \!\!\!\!\!\!  0 \!\!\!\!\!  & \!\!\!\!\!\!  2.38 \!\!\!\!\! 
			&\!\!  -9.217 \!\!\!\!\! & \!\!\!\!\!\!  0.00 \!\!\!\!\! &  \!\!\!\!  
			\!  -10.302 \!\!\! & \!\!\!  -9.217 \!\!\! & \!\!\!\!\!\!  0.00 \!\!\!\!\! & \!\!\!\!
			\!  -74.55 \!\!\! & \!\!\!  -9.217 \!\!\! & \!\!\!\!\!\!  0.00 \!\!\!\!\!\!\!\!\!\\ 
			\!\!\!\! 1661 \!\!\!\! & \!\!
			\!\!\! 1e-1 \!\!\!\!\! & \!\!\!\!\!\!\!\! 1 \!\!\!\!\!  & \!\!\!\!\!\!  2 \!\!\!\!\!  & \!\!\!\!\!\!  12.88 \!\!\!\!\! 
			&\!\!  -15.955 \!\!\!\!\! & \!\!\!\!\!\!  0.00 \!\!\!\!\! & \!\!\!\!  
			\!  -19.667 \!\!\! & \!\!\!  -15.955 \!\!\! & \!\!\!\!\!\!  0.00 \!\!\!\!\! & \!\!\!\!
			\!  -139.25 \!\!\! & \!\!\!  -15.955 \!\!\! & \!\!\!\!\!\!  0.00 \!\!\!\!\!\!\!\!\!\\ 
			\!\!\!\! 1675 \!\!\!\! & \!\!
			\!\!\! 5e-1 \!\!\!\!\! & \!\!\!\!\!\!\!\! 4 \!\!\!\!\!  & \!\!\!\!\!\!  0 \!\!\!\!\!  & \!\!\!\!\!\! 4.22 \!\!\!\!\! 
			&\!\!  -75.669 \!\!\!\!\! & \!\!\!\!\!\!  0.00 \!\!\!\!\! & \!\!\!\!  
			\!  -96.864 \!\!\! & \!\!\!  -75.669 \!\!\! & \!\!\!\!\!\!  0.00 \!\!\!\!\! & \!\!\!\!
			\!  -435.48 \!\!\! & \!\!\!  -75.669 \!\!\! & \!\!\!\!\!\!  0.00 \!\!\!\!\!\!\!\!\!\\ 
			\!\!\!\! 1703$^\dagger$ \!\!\!\! & \!\!
			\!\!\! 2e+0 \!\!\!\!\! & \!\!\!\!\!\!\!\! 1 \!\!\!\!\!  & \!\!\!\!\!\!  3 \!\!\!\!\!  & \!\!\!\!\!\! 13.50 \!\!\!\!\! 
			&\!\!  -72.376 \!\!\!\!\! & \!\!\!\!\!\!  0.00 \!\!\!\!\! &  \!\!\!\!  
			\!  -77.465 \!\!\! & \!\!\!  - \!\!\! & \!\!\!\!\!\!  - \!\!\!\!\! & \!\!\!\!
			\!  -317.99 \!\!\! & \!\!\!  -72.377 \!\!\! & \!\!\!\!\!\!  0.00 \!\!\!\!\!\!\!\!\!\\ 
			\!\!\!\! 1773 \!\!\!\! & \!\!
			\!\!\! 2e-1 \!\!\!\!\! & \!\!\!\!\!\!\!\! 3 \!\!\!\!\!  & \!\!\!\!\!\!  4 \!\!\!\!\!  & \!\!\!\!\!\!  18.01 \!\!\!\!\! 
			&\!\!  -14.626 \!\!\!\!\! & \!\!\!\!\!\!  0.11 \!\!\!\!\! &  \!\!\!\!  
			\!  -21.581 \!\!\! & \!\!\!  -14.642 \!\!\! & \!\!\!\!\!\!  0.00 \!\!\!\!\! & \!\!\!\!
			\!  -118.65 \!\!\! & \!\!\!  -14.636 \!\!\! & \!\!\!\!\!\!  0.04 \!\!\!\!\!\!\!\!\!\\ 
			\!\!\!\! 1886 \!\!\!\! & \!\!
			\!\!\! 2e+0 \!\!\!\!\! & \!\!\!\!\!\!\!\! 2 \!\!\!\!\!  & \!\!\!\!\!\!  4 \!\!\!\!\!  & \!\!\!\!\!\!  9.05 \!\!\!\!\! 
			&\!\!  -78.643 \!\!\!\!\! & \!\!\!\!\!\!  0.04 \!\!\!\!\! & \!\!\!\!  
			\!  -135.615 \!\!\! & \!\!\!  -78.672 \!\!\! & \!\!\!\!\!\!  0.00 \!\!\!\!\! & \!\!\!\!
			\!  -324.87 \!\!\! & \!\!\!  -78.672 \!\!\! & \!\!\!\!\!\!  0.00 \!\!\!\!\!\!\!\!\!\\ 
			\!\!\!\! 1913 \!\!\!\! & \!\!
			\!\!\! 1e+0 \!\!\!\!\! & \!\!\!\!\!\!\!\! 2 \!\!\!\!\!  & \!\!\!\!\!\!  6 \!\!\!\!\!  & \!\!\!\!\!\!  11.49 \!\!\!\!\! 
			&\!\! -52.108 \!\!\!\!\! & \!\!\!\!\!\!  0.00 \!\!\!\!\! &  \!\!\!\!  
			\!  -68.555 \!\!\! & \!\!\!  -52.109 \!\!\! & \!\!\!\!\!\!  0.00 \!\!\!\!\! & \!\!\!\!
			\!  -164.26 \!\!\! & \!\!\!  -51.348 \!\!\! & \!\!\!\!\!\!  1.46 \!\!\!\!\!\!\!\!\!\\ 
			\!\!\!\! 1922 \!\!\!\! & \!\!
			\!\!\! 2e+0 \!\!\!\!\! & \!\!\!\!\!\!\!\! 1 \!\!\!\!\!  & \!\!\!\!\!\!  5 \!\!\!\!\!  & \!\!\!\!\!\!  3.35 \!\!\!\!\! 
			&\!\!  -35.556 \!\!\!\!\! & \!\!\!\!\!\!  1.10 \!\!\!\!\! &  \!\!\!\!  
			\!  -121.872 \!\!\! & \!\!\!  -35.741 \!\!\! & \!\!\!\!\!\!  0.58 \!\!\!\!\! & \!\!\!\!
			\!  -123.2 \!\!\! & \!\!\!  -35.951 \!\!\! & \!\!\!\!\!\!  0.00 \!\!\!\!\!\!\!\!\!\\ 
			\!\!\!\! 1931 \!\!\!\! & \!\!
			\!\!\! 1e+0 \!\!\!\!\! & \!\!\!\!\!\!\!\! 1 \!\!\!\!\!  & \!\!\!\!\!\!  2 \!\!\!\!\!  & \!\!\!\!\!\! 2.99 \!\!\!\!\! 
			&\!\!  -55.674 \!\!\!\!\! & \!\!\!\!\!\!  0.06 \!\!\!\!\! &  \!\!\!\!  
			\!  -85.196 \!\!\! & \!\!\!  -53.760 \!\!\! & \!\!\!\!\!\!  3.50 \!\!\!\!\! & \!\!\!\!
			\!  -204.08 \!\!\! & \!\!\!  -54.290 \!\!\! & \!\!\!\!\!\!  2.55 \!\!\!\!\!\!\!\!\!\\ 
			\!\!\!\! 1967 \!\!\!\! & \!\!
			\!\!\! 5e+0 \!\!\!\!\! & \!\!\!\!\!\!\!\! 1 \!\!\!\!\!  & \!\!\!\!\!\!  8 \!\!\!\!\!  & \!\!\!\!\!\! 16.11 \!\!\!\!\! 
			&\!\!  -107.052 \!\!\!\!\! & \!\!\!\!\!\!  0.49 \!\!\!\!\! &  \!\!\!\!    
			\!  -136.098 \!\!\! & \!\!\!  0.000 \!\!\! & \!\!\!\!\!\!  100 \!\!\!\!\! & \!\!\!\!
			\!  -622.57 \!\!\! & \!\!\!  -107.581 \!\!\! & \!\!\!\!\!\!  0.00 \!\!\!\!\!\!\!\!\!\\ 
			\hline
			\!\!\!\! Max \!\!\!\! & \!\!
			\!\!\! 5 \!\!\!\!\! & \!\!\!\!\!\!\!\! 5 \!\!\!\!\!  & \!\!\!\!\!\!  10 \!\!\!\!\!  & \!\!\!\!\!\! 38 \!\!\!\!\! 
			& \!\!   \!\!\!\!\! & \!\!\!\!\!\!  1.1 \!\!\!\!\! &  \!\!\!\!    
			\!    \!\!\! & \!\!\!    \!\!\! & \!\!\!\!\!\!  100 \!\!\!\!\! & \!\!\!\!
			\!    \!\!\! & \!\!\!    \!\!\! & \!\!\!\!\!\!  6.78 \!\!\!\!\!\!\!\!\!\\ 
			\hline \hline
	\end{tabular}}
	\begin{flushleft}
		{\scriptsize $^\dagger$ Row 1703 is excluded from 
			maximum computations due to missing entries.}
	\end{flushleft}
	\label{tab:Seq_SDP_RLT}
\end{table}


As demonstrated in the tables, penalized $2\times 2$ SDP+RLT, SDP, and SDP+RLT have successfully obtained feasible points within $4\%$ gaps from QPLIB solutions. 
Sequential SDP requires a smaller number of rounds compared to sequential $2\times 2$ SDP to meet the stopping criterion \eqref{scri}. Using any of the relaxations, the infeasible initial points can be rounded to a feasible point with only two round of Algorithm \ref{al:alg_1} and all relaxations arrive at satisfactory gaps percentages. As demonstrated by the tables, the proposed sequential approach exhibits reasonable performance in comparison with the non-convex optimizers BARON and COUENNE.

Figures \ref{plt:rounds_1451}, shows the convergence of Algorithm \ref{al:alg_1} for cases 1507. The choice of $\eta$ for all curves are taken from the corresponding rows of the Tables 
\ref{tab:Seq_SOCP}, \ref{tab:Seq_SOCP_RLT}, \ref{tab:Seq_SDP}, and \ref{tab:Seq_SDP_RLT}.

\begin{figure}[h!]
	\captionsetup[subfigure]{position=b}
	\centering
	\includegraphics[width=0.6\textwidth]{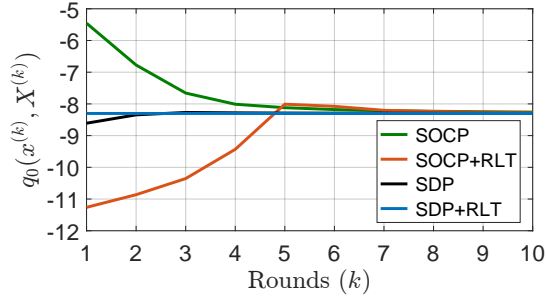}
	\caption{\small Convergence of sequential $2\times 2$ SDP, $2\times 2$ SDP+RLT, SDP, and SDP+RLT for inst. 1507.}
	\label{plt:rounds_1451}
\end{figure}

\subsubsection{Choice of the penalty parameter $\eta$}
In this experiment the sensitivity of different { penalization methods} to the choice of the penalty parameter $\eta$ is tested. To this end, one round of the penalized SDP \eqref{prob_lifted_pen_obj}-\eqref{prob_lifted_pen_conic} is solved for a wide range of $\eta$ values. The benchmark case 1143 is used for this experiment.  
If $\eta$ is small, none of the proposed penalized SDPs are tight for the case 1143. As the value of $\eta$ increases, the feasibility violation $\mathrm{tr}\{\accentset{\ast}{\boldsymbol{X}}-\accentset{\ast}{\boldsymbol{x}}\accentset{\ast}{\boldsymbol{x}}^{\top}\}$ abruptly vanishes once crossing { $\eta=1.9$, $\eta=7.7$, and $\eta=19.6$, for the penalized $2\times 2$ SDP, SDP and SDP+RLT, respectively}. 
Remarkably, if $\accentset{\ast}{\boldsymbol{x}}^{\mathrm{SDP+RLT}}$ is used as the initial point and $\eta\simeq 2$, then the penalized SDP+RLT \eqref{prob_lifted_pen_obj}-\eqref{prob_lifted_pen_conic} produces a feasible point for the benchmark case 1143 whose objective value is within 0.2\% of the reported optimal cost $q_0(\boldsymbol{x}^{\mathrm{QPLIB}})$.

{
Additionally, Figure \ref{plt:eta} shows the result of one round penalized SDP for a wide range of $\eta$ values, on cases QPLIB 1423, 1675, and 1967. As demonstrated by the figures, the resulting objective values of penalized SDP grow slowly beyond certain limits of $\eta$. This indicates that the proposed approach is not very sensitive to the choice of $\eta$ and a wide range of $\eta$ values can be used for penalization.}

\begin{figure}[h!]
	\captionsetup[subfigure]{position=b}
	\centering
	\includegraphics[width=0.5\textwidth]{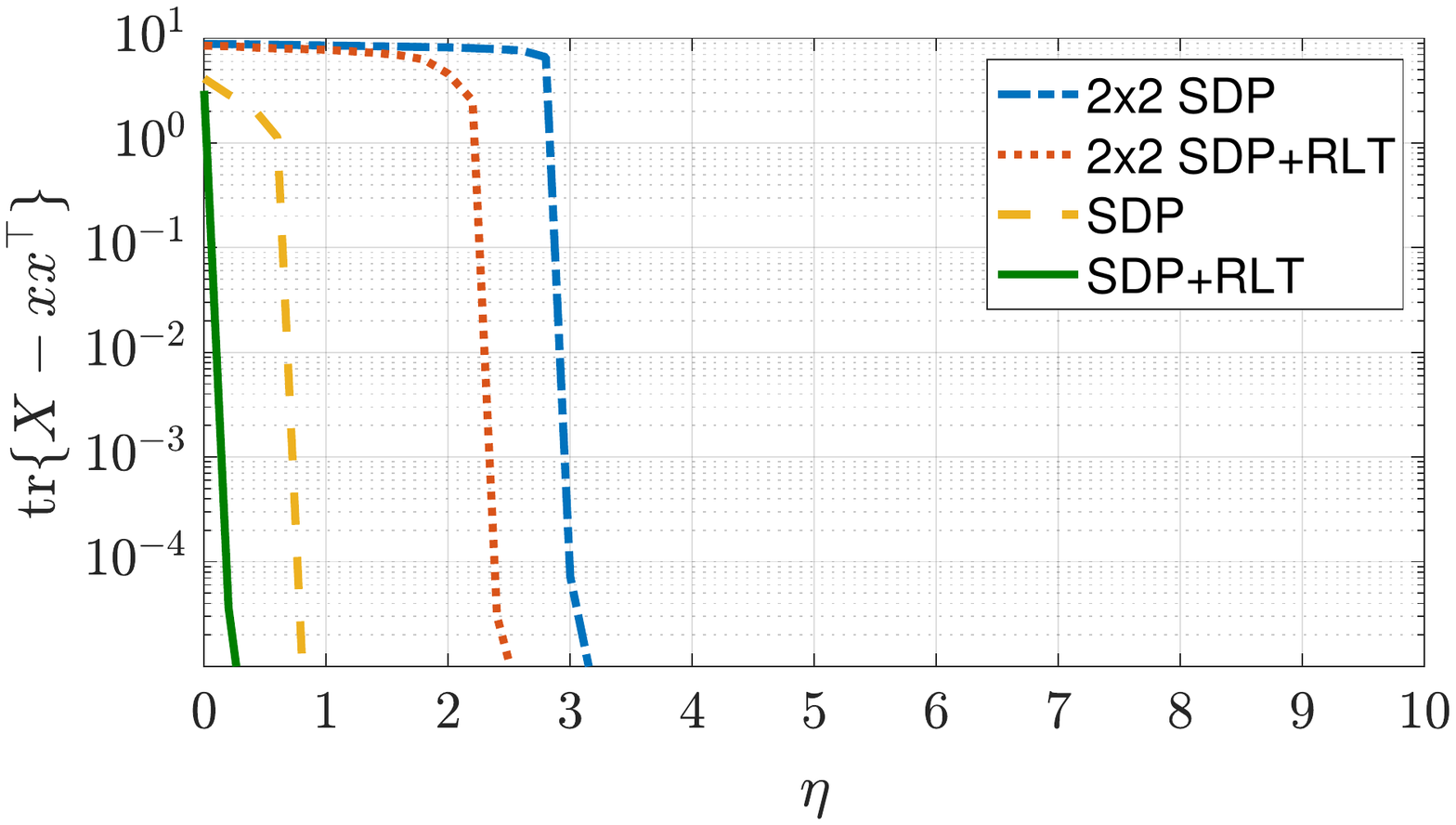}\!\!
	\includegraphics[width=0.5\textwidth]{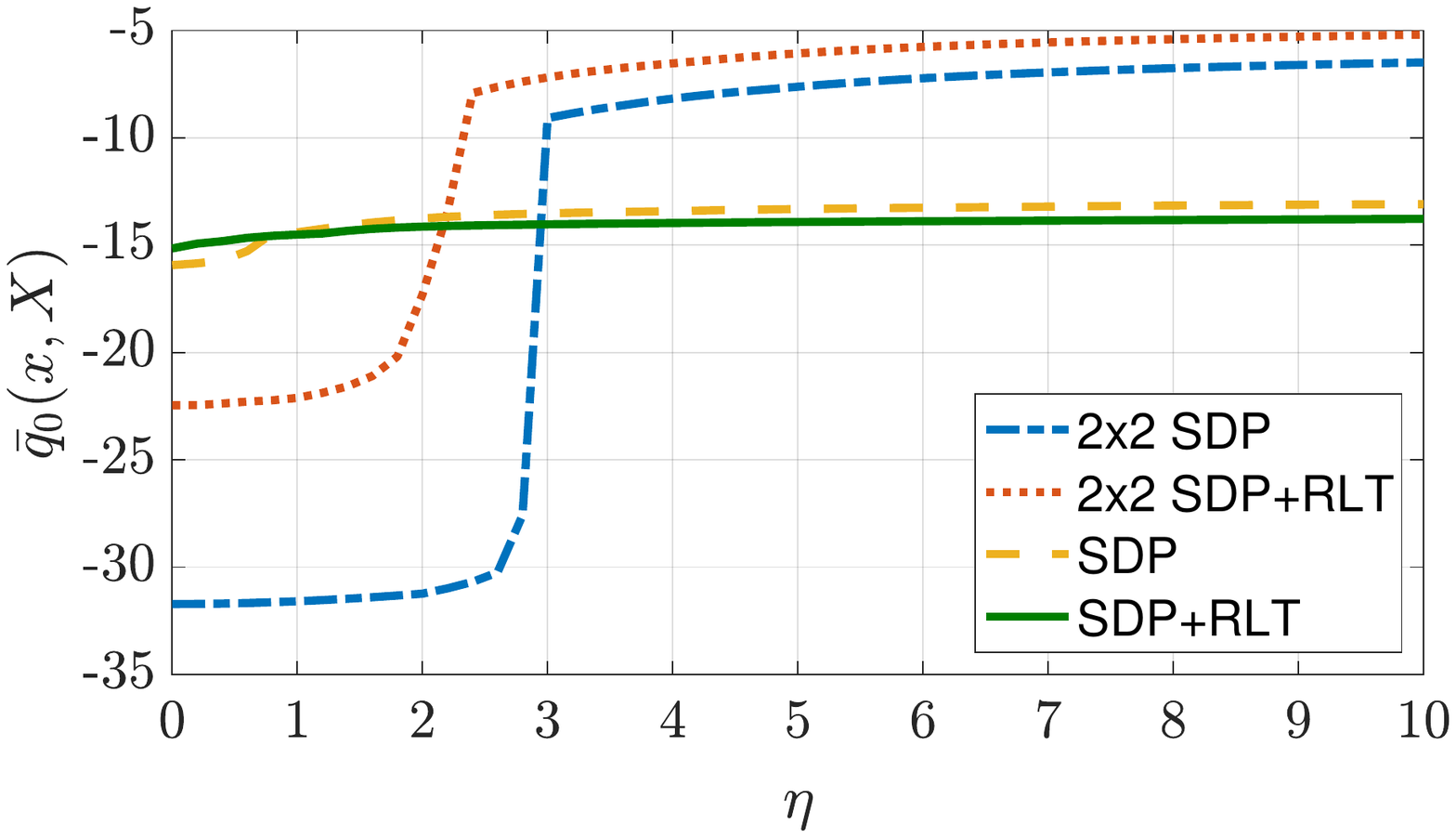}\\
	\includegraphics[width=0.5\textwidth]{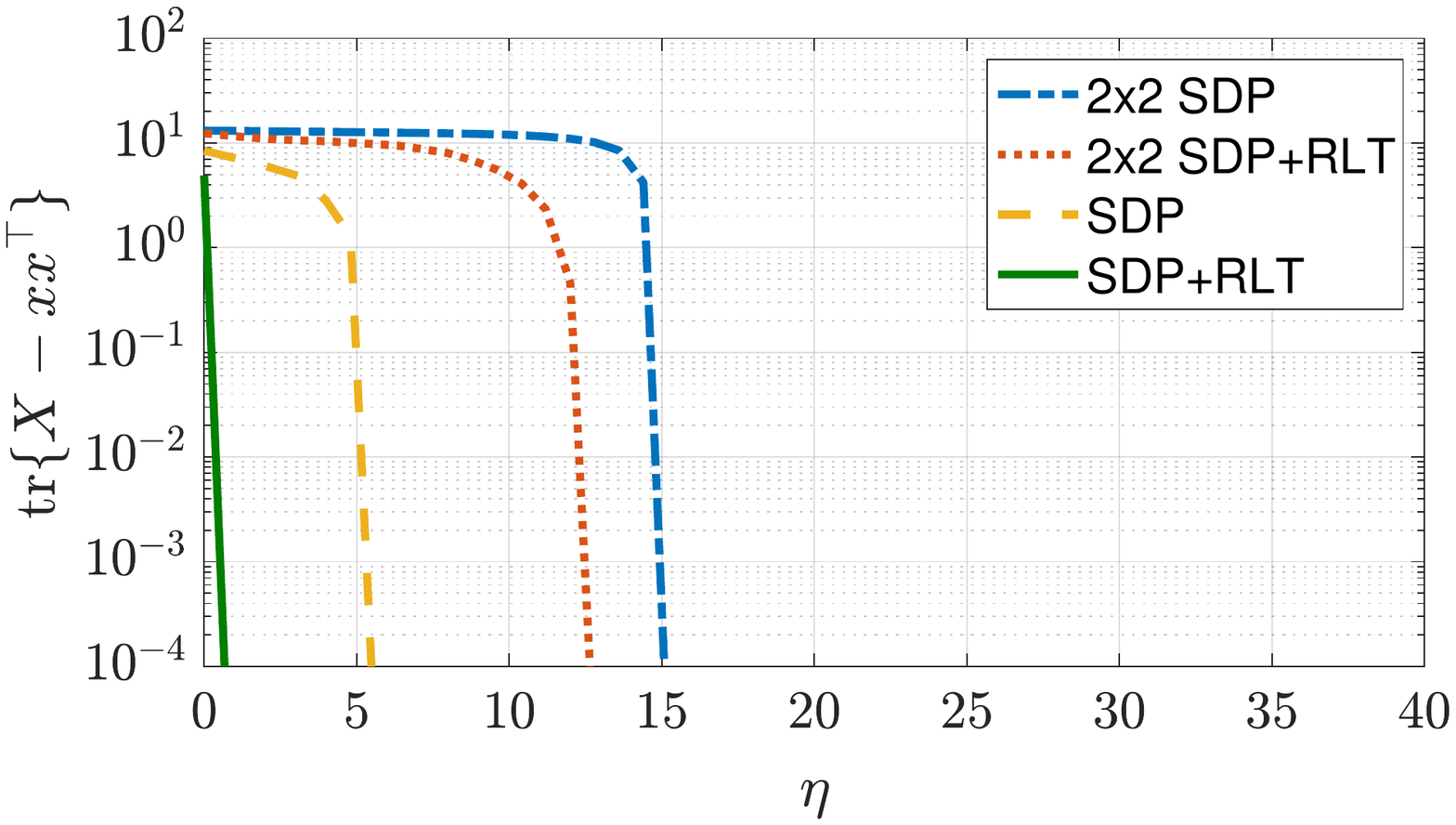}\!\!
	\includegraphics[width=0.5\textwidth]{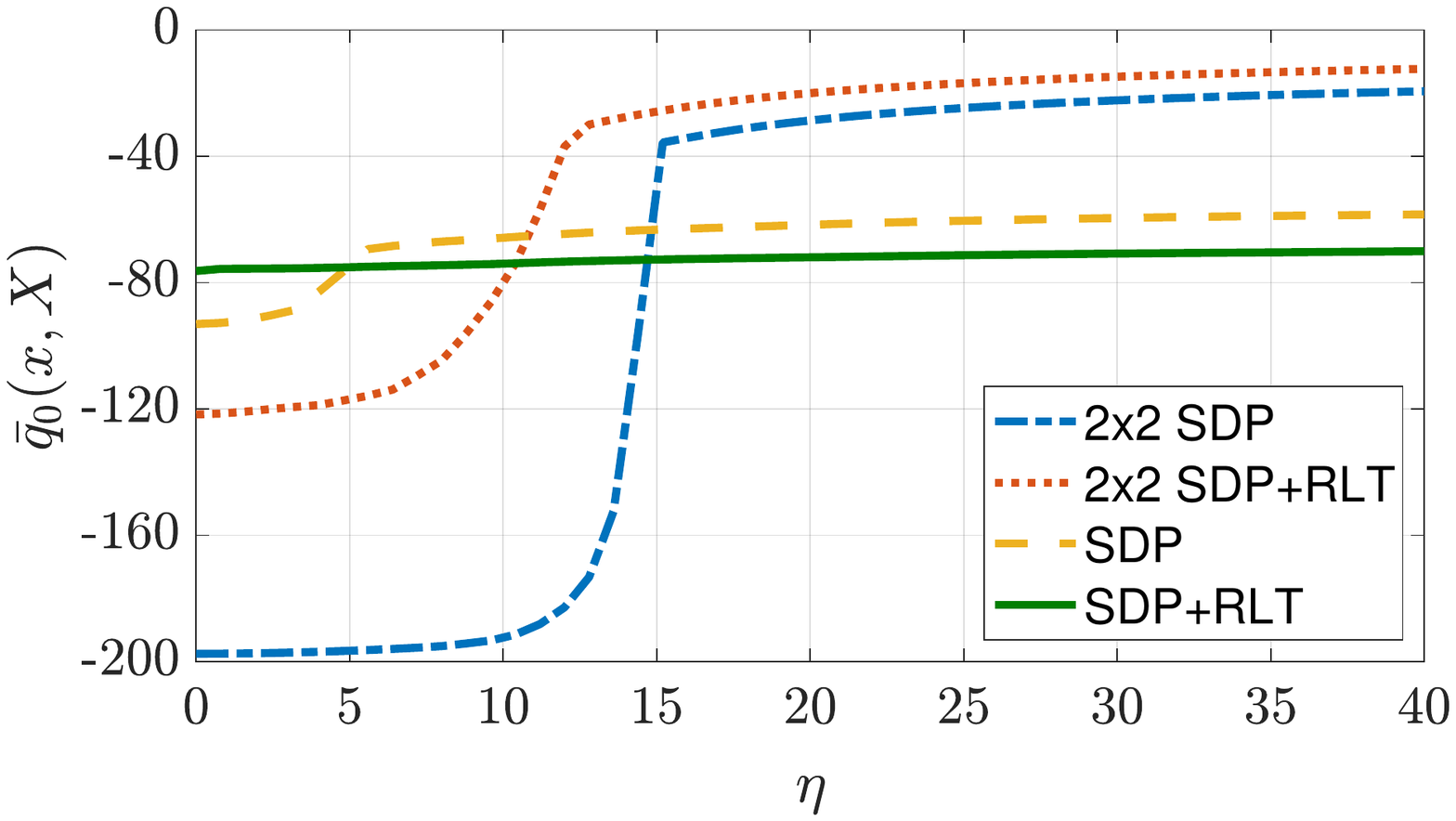}\\
	\includegraphics[width=0.5\textwidth]{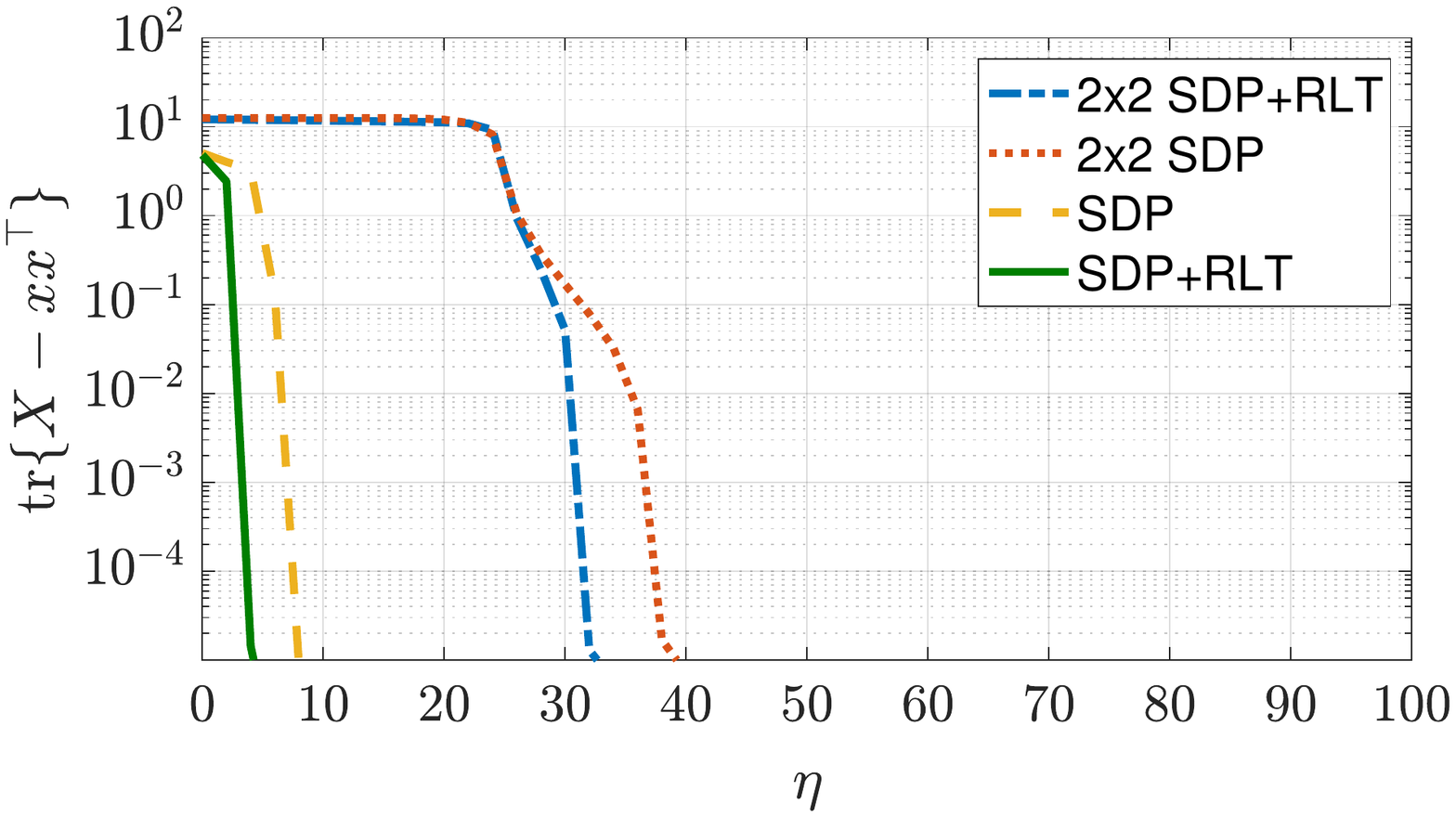}\!\!
	\includegraphics[width=0.5\textwidth]{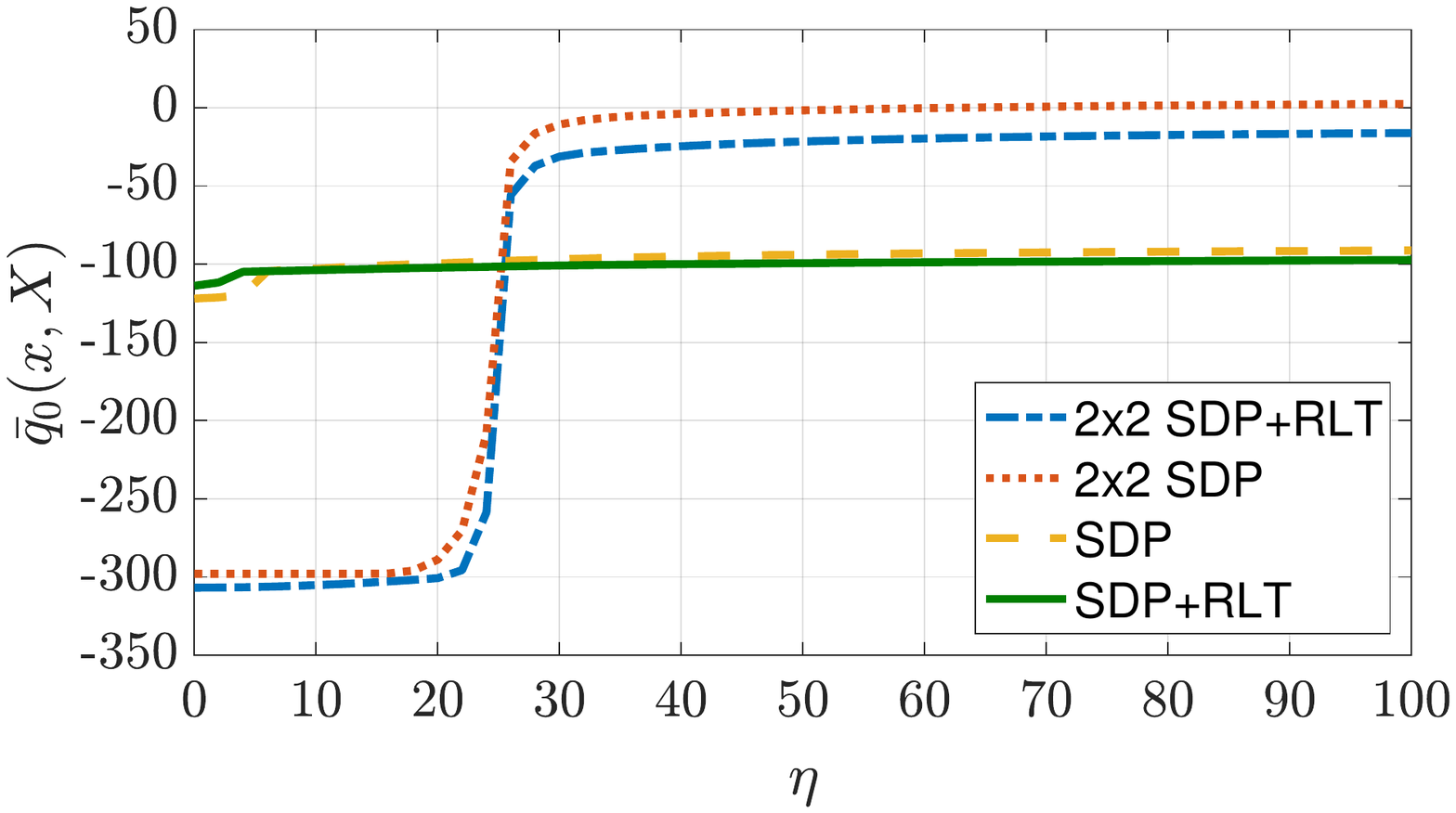}
	\caption{\small The effect of $\eta$ on the performance of penalized $2\times 2$ SDP, $2\times 2$ SDP+RLT, SDP, and SDP+RLT for cases QPLIB 1423, 1675, and 1967.}
	\label{plt:eta}
\end{figure}

\subsection{Large-scale system identification problems}


{
Following \cite{fattahi2018data}, this case study is concerned with the problem of identifying the parameters of linear dynamical systems given limited observations and non-uniform snapshots of state vectors. 
Optimization is an important tool for problems involving dynamical systems such as the identification of transfer functions and control synthesis \cite{1_rotkowitz2005characterization, 2_wang2019system,3_wang2018separable,4_kheirandishfard2018convex,5_fattahi2018transformation}.
One of these computationally-hard problems is system identification based solely on data (without intrusive means) which has been widely studied in the literature of control \cite{6_sarkar2018fast,7_pereira2010learning}. 
In this case study, we cast system identification as a non-convex QCQP and evaluate the ability of the proposed penalized SDP in solving very large scale instances of this problem.
}

Consider a discrete-time linear system described by the system of equations:
\begin{subequations}\label{sys}
	\begin{align}
	\boldsymbol{z}[\tau+1]&=\boldsymbol{A}\boldsymbol{z}[\tau]+\boldsymbol{B}\boldsymbol{u}[\tau]+\boldsymbol{w}[\tau] && \tau= 1,2,\ldots,T-1
	\end{align}
\end{subequations}
where 
\begin{itemize}
	\item $\{\boldsymbol{z}[\tau]\in\mathbb{R}^n\}^{T}_{\tau=1}$ are the state vectors that are known at times $\tau\in\{\tau_1,\ldots,\tau_o\}$,
	\item $\{\boldsymbol{u}[\tau]\in\mathbb{R}^m\}^{T}_{\tau=1}$ are the known control command vectors.
	\item $\boldsymbol{A}\in\mathbb{R}^{n\times n}$ and 
	$\boldsymbol{B}\in\mathbb{R}^{n\times m}$ are fixed unknown matrices, and
	\item $\{\boldsymbol{w}[\tau]\in\mathbb{R}^n\}^{T}_{\tau=1}$ account for the unknown disturbance vectors.
\end{itemize}
Our goal is to estimate the pair of ground truth matrices $(\bar{\boldsymbol{A}},\bar{\boldsymbol{B}})$, given a sample trajectory of the control commands $\{\bar{\boldsymbol{u}}[\tau]\in\mathbb{R}^n\}^{T}_{\tau=1}$ and the incomplete state vectors $\{\bar{\boldsymbol{z}}[\tau]\in\mathbb{R}^n\}_{\tau\in\{\tau_1,\ldots,\tau_o\}}$. To this end, we employ the minimum least absolute value estimator which amounts to the following QCQP:
\begin{subequations}\begin{align}
	& \hspace{-5mm}\underset{\begin{subarray}{l} 
		\hspace{-7mm}\phantom{\boldsymbol{B}\boldsymbol{A}\boldsymbol{z}}\{\boldsymbol{y}[\tau]\in\mathbb{R}^n\}^{T-1}_{\tau=1} \\
		\hspace{-7mm}\phantom{\boldsymbol{A}\boldsymbol{B}\boldsymbol{y}}\{\boldsymbol{z}[\tau]\in\mathbb{R}^n\}^{T}_{\tau=1} \\ 
		\hspace{-7mm}\phantom{\boldsymbol{B}\boldsymbol{y}\{\boldsymbol{z}[\tau]}\boldsymbol{A}\in\mathbb{R}^{n\times n}\\
		\hspace{-7mm}\phantom{\boldsymbol{A}\boldsymbol{y}\{\boldsymbol{z}[\tau]}\boldsymbol{B}\in\mathbb{R}^{n\times m}
		\end{subarray}}{\text{minimize}}
	& & \hspace{-5mm}\sum^{T-1}_{\tau=1}{\boldsymbol{1}^{\top}_n\boldsymbol{y}[\tau]} & & & & \hspace{-15mm}\label{lav_obj}\\
	& \hspace{-5mm}\text{~~~subject to}
	& & \hspace{-5mm}
	\boldsymbol{y}[\tau]\geq
	+\boldsymbol{z}[\tau+1]-\boldsymbol{A}\boldsymbol{z}[\tau]-\boldsymbol{B}\bar{\boldsymbol{u}}[\tau] 
	& & & & \hspace{-10mm}\tau\in\{1,2,\ldots,T-1\}\hspace{-5mm} \label{lav_1},\\
	& \hspace{-5mm}
	& & \hspace{-5mm}
	\boldsymbol{y}[\tau]\geq-\boldsymbol{z}[\tau+1]+\boldsymbol{A}\boldsymbol{z}[\tau]+\boldsymbol{B}\bar{\boldsymbol{u}}[\tau]
	& & & & \hspace{-10mm}\tau\in\{1,2,\ldots,T-1\} \hspace{-5mm}\label{lav_2},\\
	& \hspace{-5mm}
	& & \hspace{-5mm}\boldsymbol{z}[\tau] = \bar{\boldsymbol{z}}[\tau]
	& & & & \hspace{-10mm}\tau\in\{\tau_1,\ldots,\tau_o\}.\hspace{-5mm} \label{lav_3}
	\end{align}\end{subequations}
For every $\tau\in\{1,2,\ldots,T-1\}$, the auxiliary variable $\boldsymbol{y}[\tau]\in\mathbb{R}^n$ accounts for 
$|\boldsymbol{z}[\tau+1]-\boldsymbol{A}\boldsymbol{z}[\tau]-\boldsymbol{B}\bar{\boldsymbol{u}}[\tau]|$. This relation is imposed through the pair of constraints \eqref{lav_1} and \eqref{lav_2}.

The problem \eqref{lav_obj}--\eqref{lav_3}, can be cast in the form of \eqref{prob_obj}-\eqref{prob_eq}, with respect to the vector
\begin{align}
\boldsymbol{x}\triangleq[
\boldsymbol{z}[1]^{\top},\,\ldots,\,\boldsymbol{z}[T]^{\top},
\mathrm{vec}\{\boldsymbol{A}\}^{\top},
\alpha\,\boldsymbol{y}[1]^{\top},\,\ldots,\,\alpha\,\boldsymbol{y}[T-1]^{\top},
\alpha\,\mathrm{vec}\{\boldsymbol{B}\}^{\top}],\!\!\!
\end{align}
where $\alpha$ is a preconditioning constant. To solve the resulting problem, we use the sequential Algorithm \ref{al:alg_1} equipped with the $2\times 2$ SDP relaxation and the initial point $\hat{\boldsymbol{x}}=\boldsymbol{0}$.

We consider system identification problems with $n=25$, $m=20$, $T=500$ and $o=400$. In every experiment, $\{\tau_1,\ldots,\tau_o\}$ is a uniformly selected subset of $\{1,2,\ldots,T\}$. The resulting QCQP variable $\boldsymbol{x}$ is $23605$-dimensional and the problem is $16100$-dimensional if we exclude the known state vectors $\{\bar{\boldsymbol{z}}[\tau]\in\mathbb{R}^n\}_{\tau\in\{\tau_1,\ldots,\tau_o\}}$. Due to sparsity of the QCQP \eqref{lav_obj}-\eqref{lav_3} each round of the penalized $2\times 2$ SDP is solved within 30 minutes, by omitting the elements of the lifted variable $\boldsymbol{X}$ that do not appear in the objective and constraints. All of the convex programs are solved using MOSEK v8.1 \cite{mosek} through MATLAB 2017a and on a desktop computer with a 12-core 3.0GHz CPU and 256GB RAM. Due to the sheer size of this problem, we were only able to solve instances with $T\leq 70$ using BARON and COUENNE non of which resulted in successful recovery of the unknown matrices due to limited data points.

\begin{figure}[h!]
	\captionsetup[subfigure]{position=b}
	\centering
	\includegraphics[width=0.6\textwidth]{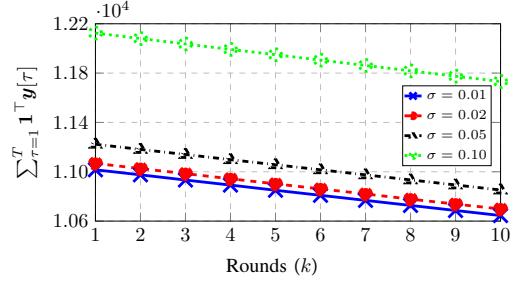}
	\caption{\small Convergence of the sequential penalized $2\times 2$ SDP for large-scale system identification with different disturbance levels.}
	\label{sys_id}
\end{figure}

The ground truth values are chosen as follows:
\begin{itemize}
	\item The elements of $\bar{\boldsymbol{A}}\in\mathbb{R}^{25\times 25}$ have zero-mean Gaussian distribution and the matrix is scaled in such a way that the largest singular value is equal to $0.5$.
	\item Every element of $\bar{\boldsymbol{B}}\in\mathbb{R}^{25\times 20}$, $\{\bar{\boldsymbol{u}}[\tau]\in\mathbb{R}^{20}\}^{T}_{\tau=1}$ and $\bar{\boldsymbol{z}}[1]\in\mathbb{R}^{25}$ have standard normal distribution.
	\item The elements of $\{\bar{\boldsymbol{w}}[\tau]\in\mathbb{R}^{25}\}^{T-1}_{\tau=1}$ have independent zero-mean Gaussian distribution with the standard deviation $\sigma\in\{0.01,0.02,0.05,0.10\}$.
\end{itemize}
For each experiment, we ran Algorithm \ref{al:alg_1} for 10 rounds. The preconditioning and penalty terms are set to $\alpha=10^{-3}$ and $\eta=40$, respectively. 
For each $\sigma\in\{0.01,0.02,$ $0.05,0.10\}$, we have run 10 random experiments resulting in the average recovery errors $0.0005$, $0.0010$, $0.0026$, and $0.0062$, respectively, for 
$\frac{1}{n}\|\bar{\boldsymbol{A}}-\boldsymbol{A}^{(10)}\|_F$, and the average errors $0.0014$, $0.0028$, $0.0070$, and $0.0141$, respectively, for $(mn)^{-\frac{1}{2}}\|\bar{\boldsymbol{B}}-\boldsymbol{B}^{(10)}\|_F$. 
In all of the trials, a feasible point is obtained in the first round of Algorithm \ref{al:alg_1}.
Figure \ref{sys_id} illustrates the convergence behavior of the objective functions for one of the trials for each disturbance level.



\section{Conclusions} \label{sec:conc}

This paper introduces a penalization approach for constructing feasible and near-optimal solutions to non-convex quadratically-constrained quadratic programming (QCQP) problems.
Given an arbitrary initial point (feasible or infeasible) for the original QCQP, penalized semidefinite programs are formulated by adding a linear term to the objective. A generalized linear independence constraint qualification (LICQ) condition is introduced as a regularity criterion for initial points, and it is shown that the solution of penalized {SDP} is feasible for QCQP if the initial point is regular and close to the feasible set. We show that the proposed penalized {SDPs} can be solved sequentially in order to improve the objective of the feasible solution. Numerical experiments on QPLIB benchmark cases demonstrate that the proposed sequential approach compares favorably with non-convex optimizers BARON and COUENNE. Moreover, the scalability of the proposed method is demonstrated on large-scale system identification problems.


\begin{acknowledgements}
The authors are grateful to GAMS Development Corporation for providing them with unrestricted access to a full set of solvers throughout the project.
\end{acknowledgements}

%
%

\bibliographystyle{spbasic}      
\bibliography{references}   

\begin{thebibliography}{87}
\providecommand{\natexlab}[1]{#1}
\providecommand{\url}[1]{{#1}}
\providecommand{\urlprefix}{URL }
\expandafter\ifx\csname urlstyle\endcsname\relax
  \providecommand{\doi}[1]{DOI~\discretionary{}{}{}#1}\else
  \providecommand{\doi}{DOI~\discretionary{}{}{}\begingroup
  \urlstyle{rm}\Url}\fi
\providecommand{\eprint}[2][]{\url{#2}}

\bibitem[{Ahmadi and Majumdar(2019)}]{ahmadi2017dsos}
Ahmadi AA, Majumdar A (2019) {DSOS} and {SDSOS} optimization: more tractable
  alternatives to sum of squares and semidefinite optimization. SIAM Journal on
  Applied Algebraic Geometry 3(2):193--230

\bibitem[{Aittomaki and Koivunen(2009)}]{aittomaki2009beampattern}
Aittomaki T, Koivunen V (2009) Beam pattern optimization by minimization of
  quartic polynomial. In: 2009 IEEE/SP 15th Workshop on Statistical Signal
  Processing, IEEE, pp 437--440

\bibitem[{Alizadeh and Goldfarb(2003)}]{alizadeh2003second}
Alizadeh F, Goldfarb D (2003) Second-order cone programming. Mathematical
  Programming 95(1):3--51

\bibitem[{ApS(2017)}]{mosek}
ApS M (2017) The MOSEK optimization toolbox for MATLAB manual. Version 8.1.
  \urlprefix\url{http://docs.mosek.com/8.1/toolbox/index.html}

\bibitem[{Ashraphijuo et~al.(2016)Ashraphijuo, Madani, and
  Lavaei}]{ashraphijuo2016characterization}
Ashraphijuo M, Madani R, Lavaei J (2016) Characterization of rank-constrained
  feasibility problems via a finite number of convex programs. In: 2016 IEEE
  55th Conference on Decision and Control (CDC), IEEE, pp 6544--6550

\bibitem[{Atamt\"urk and G\'omez(2019)}]{aa:rank-one}
Atamt\"urk A, G\'omez A (2019) Rank-one convexification for sparse regression.
  arXiv preprint arXiv:190110334

\bibitem[{Atamt{\"u}rk and Narayanan(2007)}]{AN:cmir}
Atamt{\"u}rk A, Narayanan V (2007) Cuts for conic mixed-integer programming.
  In: Fischetti M, Williamson DP (eds) Integer Programming and Combinatorial
  Optimization, Springer, Berlin, Heidelberg, pp 16--29

\bibitem[{Aubry et~al.(2013)Aubry, De~Maio, Jiang, and
  Zhang}]{aubry2013ambiguity}
Aubry A, De~Maio A, Jiang B, Zhang S (2013) Ambiguity function shaping for
  cognitive radar via complex quartic optimization. IEEE Transactions on Signal
  Processing 61(22):5603--5619

\bibitem[{Bandeira et~al.(2014)Bandeira, Boumal, and
  Singer}]{bandeira2014tightness}
Bandeira AS, Boumal N, Singer A (2014) Tightness of the maximum likelihood
  semidefinite relaxation for angular synchronization. arXiv preprint
  arXiv:14113272

\bibitem[{Bao et~al.(2011)Bao, Sahinidis, and
  Tawarmalani}]{bao2011semidefinite}
Bao X, Sahinidis NV, Tawarmalani M (2011) Semidefinite relaxations for
  quadratically constrained quadratic programming: {A} review and comparisons.
  Mathematical Programming 129:129--157

\bibitem[{Belotti(2013)}]{COUENNE}
Belotti P (2013) {COUENNE}: {A} user’s manual. Tech. rep., Technical report,
  Lehigh University

\bibitem[{Bienstock and Munoz(2018)}]{bienstock2018lp}
Bienstock D, Munoz G (2018) {LP} formulations for polynomial optimization
  problems. SIAM Journal on Optimization 28(2):1121--1150

\bibitem[{Burer and Vandenbussche(2008)}]{burer2008finite}
Burer S, Vandenbussche D (2008) A finite branch-and-bound algorithm for
  nonconvex quadratic programming via semidefinite relaxations. Mathematical
  Programming 113(2):259--282

\bibitem[{Burer and Ye(2018)}]{burer2018exact}
Burer S, Ye Y (2018) Exact semidefinite formulations for a class of (random and
  non-random) nonconvex quadratic programs. arXiv preprint arXiv:180202688

\bibitem[{Burgdorf et~al.(2015)Burgdorf, Laurent, and
  Piovesan}]{burgdorf2015closure}
Burgdorf S, Laurent M, Piovesan T (2015) On the closure of the completely
  positive semidefinite cone and linear approximations to quantum colorings.
  arXiv preprint arXiv:150202842

\bibitem[{Cand{\`e}s and Recht(2009)}]{candes2009exact}
Cand{\`e}s EJ, Recht B (2009) Exact matrix completion via convex optimization.
  Foundations of Computational Mathematics 9(6):717--772

\bibitem[{Candes et~al.(2013)Candes, Strohmer, and
  Voroninski}]{candes2013phaselift}
Candes EJ, Strohmer T, Voroninski V (2013) Phaselift: Exact and stable signal
  recovery from magnitude measurements via convex programming. Communications
  on Pure and Applied Mathematics 66(8):1241--1274

\bibitem[{Candes et~al.(2015)Candes, Eldar, Strohmer, and
  Voroninski}]{candes2015phase}
Candes EJ, Eldar YC, Strohmer T, Voroninski V (2015) Phase retrieval via matrix
  completion. SIAM Review 57(2):225--251

\bibitem[{Chen et~al.(2017)Chen, Atamt{\"u}rk, and Oren}]{chen2017spatial}
Chen C, Atamt{\"u}rk A, Oren SS (2017) A spatial branch-and-cut method for
  nonconvex {QCQP} with bounded complex variables. Mathematical Programming
  165(2):549--577

\bibitem[{Chen and Vaidyanathan(2009)}]{chen2009mimo}
Chen CY, Vaidyanathan P (2009) Mimo radar waveform optimization with prior
  information of the extended target and clutter. IEEE Transactions on Signal
  Processing 57(9):3533--3544

\bibitem[{Chen and Burer(2012)}]{chen2012globally}
Chen J, Burer S (2012) Globally solving nonconvex quadratic programming
  problems via completely positive programming. Mathematical Programming
  Computation 4(1):33--52

\bibitem[{Cid et~al.(2004)Cid, Murphy, and Robshaw}]{cid2004computational}
Cid C, Murphy S, Robshaw M (2004) Computational and algebraic aspects of the
  advanced encryption standard. In: Proceedings of the Seventh International
  Workshop on Computer Algebra in Scientific Computing-CASC, vol 2004

\bibitem[{Cid et~al.(2005)Cid, Murphy, and Robshaw}]{cid2005small}
Cid C, Murphy S, Robshaw MJ (2005) Small scale variants of the aes. In:
  International Workshop on Fast Software Encryption, Springer, pp 145--162

\bibitem[{Courtois and Pieprzyk(2002)}]{courtois2002cryptanalysis}
Courtois NT, Pieprzyk J (2002) Cryptanalysis of block ciphers with overdefined
  systems of equations. In: International Conference on the Theory and
  Application of Cryptology and Information Security, Springer, pp 267--287

\bibitem[{Deza and Laurent(1994)}]{deza1994applications}
Deza M, Laurent M (1994) Applications of cut polyhedra-{II}. Journal of
  Computational and Applied Mathematics 55(2):217--247

\bibitem[{Fattahi and Sojoudi(2018)}]{fattahi2018data}
Fattahi S, Sojoudi S (2018) Data-driven sparse system identification. In: 56th
  Annual Allerton Conference on Communication, Control, and Computing
  (Allerton), IEEE

\bibitem[{Fattahi et~al.(2018)Fattahi, Fazelnia, Lavaei, and
  Arcak}]{5_fattahi2018transformation}
Fattahi S, Fazelnia G, Lavaei J, Arcak M (2018) Transformation of optimal
  centralized controllers into near-globally optimal static distributed
  controllers. IEEE Transactions on Automatic Control 64(1):66--80

\bibitem[{Fazelnia et~al.(2017)Fazelnia, Madani, Kalbat, and
  Lavaei}]{fazelnia2014convex}
Fazelnia G, Madani R, Kalbat A, Lavaei J (2017) Convex relaxation for optimal
  distributed control problems. IEEE Transactions on Automatic Control
  62(1):206--221

\bibitem[{Fogel et~al.(2016)Fogel, Waldspurger, and
  dAspremont}]{fogel2013phase}
Fogel F, Waldspurger I, dAspremont A (2016) Phase retrieval for imaging
  problems. Mathematical Programming Computation 8:311--335

\bibitem[{Furini et~al.(2019)Furini, Traversi, Belotti, Frangioni, Gleixner,
  Gould, Liberti, Lodi, Misener, Mittelmann, Sahinidis, Vigerske, and
  Wiegele}]{FuriniEtAl2017TR}
Furini F, Traversi E, Belotti P, Frangioni A, Gleixner A, Gould N, Liberti L,
  Lodi A, Misener R, Mittelmann H, Sahinidis N, Vigerske S, Wiegele A (2019)
  {QPLIB}: {A} library of quadratic programming instances. Mathematical
  Programming Computation 11:237--310, \urlprefix\url{http://qplib.zib.de/}

\bibitem[{{GAMS Development Corporation}(2013)}]{GamsSoftware2013}
{GAMS Development Corporation} (2013) {General Algebraic Modeling System (GAMS)
  Release 24.2.1}. Washington, DC, USA, \urlprefix\url{http://www.gams.com/}

\bibitem[{Gershman et~al.(2010)Gershman, Sidiropoulos, Shahbazpanahi,
  Bengtsson, and Ottersten}]{gershman2010convex}
Gershman AB, Sidiropoulos ND, Shahbazpanahi S, Bengtsson M, Ottersten B (2010)
  Convex optimization-based beamforming: From receive to transmit and network
  designs. IEEE Signal Process Mag 27(3):62--75

\bibitem[{Goemans and Williamson(1995)}]{goemans1995improved}
Goemans MX, Williamson DP (1995) Improved approximation algorithms for maximum
  cut and satisfiability problems using semidefinite programming. Journal of
  the ACM (JACM) 42(6):1115--1145

\bibitem[{Han et~al.(2020)Han, G\'omez, and Atamt\"urk}]{aa:2by2}
Han S, G\'omez A, Atamt\"urk A (2020) 2x2-convexifications for convex quadratic
  optimization with indicator variables. arXiv preprint arXiv:200407448

\bibitem[{He et~al.(2008)He, Luo, Nie, and Zhang}]{He08}
He S, Luo Z, Nie J, Zhang S (2008) Semidefinite relaxation bounds for
  indefinite homogeneous quadratic optimization. SIAM Journal on Optimization
  19:503--523

\bibitem[{He et~al.(2010)He, Li, and Zhang}]{He10}
He S, Li Z, Zhang S (2010) Approximation algorithms for homogeneous polynomial
  optimization with quadratic constraints. Mathematical Programming
  125:353--383

\bibitem[{Hilling and Sudbery(2010)}]{hilling2010geometric}
Hilling JJ, Sudbery A (2010) The geometric measure of multipartite entanglement
  and the singular values of a hypermatrix. Journal of Mathematical Physics
  51(7):072102

\bibitem[{Ibaraki and Tomizuka(2001)}]{ibaraki2001rank}
Ibaraki S, Tomizuka M (2001) Rank minimization approach for solving {BMI}
  problems with random search. In: Proceedings of the 2001 American Control
  Conference.(Cat. No. 01CH37148), IEEE, vol~3, pp 1870--1875

\bibitem[{Josz and Molzahn(2018)}]{josz2015moment}
Josz C, Molzahn DK (2018) Lasserre hierarchy for large scale polynomial
  optimization in real and complex variables. SIAM Journal on Optimization
  28(2):1017--1048

\bibitem[{Kheirandishfard et~al.(2018{\natexlab{a}})Kheirandishfard,
  Zohrizadeh, Adil, and Madani}]{BMI2}
Kheirandishfard M, Zohrizadeh F, Adil M, Madani R (2018{\natexlab{a}}) Convex
  relaxation of bilinear matrix inequalities part {II}: Applications to optimal
  control synthesis. In: IEEE 57th Annual Conference on Decision and Control
  (CDC)

\bibitem[{Kheirandishfard et~al.(2018{\natexlab{b}})Kheirandishfard,
  Zohrizadeh, Adil, and Madani}]{4_kheirandishfard2018convex}
Kheirandishfard M, Zohrizadeh F, Adil M, Madani R (2018{\natexlab{b}}) Convex
  relaxation of bilinear matrix inequalities part ii: Applications to optimal
  control synthesis. In: 2018 IEEE Conference on Decision and Control (CDC),
  IEEE, pp 75--82

\bibitem[{Kheirandishfard et~al.(2018{\natexlab{c}})Kheirandishfard,
  Zohrizadeh, and Madani}]{BMI1}
Kheirandishfard M, Zohrizadeh F, Madani R (2018{\natexlab{c}}) Convex
  relaxation of bilinear matrix inequalities part {I}: Theoretical results. In:
  IEEE 57th Annual Conference on Decision and Control (CDC),

\bibitem[{Kim and Kojima(2003)}]{kim2003exact}
Kim S, Kojima M (2003) Exact solutions of some nonconvex quadratic optimization
  problems via {SDP} and {SOCP} relaxations. Computational Optimization and
  Applications 26(2):143--154

\bibitem[{Kim et~al.(2003)Kim, Kojima, and Yamashita}]{kim2003second}
Kim S, Kojima M, Yamashita M (2003) Second order cone programming relaxation of
  a positive semidefinite constraint. Optimization Methods and Software
  18:535--541

\bibitem[{Lasserre(2001{\natexlab{a}})}]{lasserre2001explicit}
Lasserre JB (2001{\natexlab{a}}) An explicit exact {SDP} relaxation for
  nonlinear 0-1 programs. In: Integer Programming and Combinatorial
  Optimization, Springer, pp 293--303

\bibitem[{Lasserre(2001{\natexlab{b}})}]{lasserre2001global}
Lasserre JB (2001{\natexlab{b}}) Global optimization with polynomials and the
  problem of moments. SIAM Journal on Optimization 11:796--817

\bibitem[{Lasserre(2006)}]{lasserre2006convergent}
Lasserre JB (2006) Convergent {SDP}-relaxations in polynomial optimization with
  sparsity. SIAM Journal on Optimization 17:822--843

\bibitem[{Laurent and Piovesan(2015)}]{laurent2015conic}
Laurent M, Piovesan T (2015) Conic approach to quantum graph parameters using
  linear optimization over the completely positive semidefinite cone. SIAM
  Journal on Optimization 25(4):2461--2493

\bibitem[{Li et~al.(2012)Li, He, and Zhang}]{li2012approximation}
Li Z, He S, Zhang S (2012) Approximation methods for polynomial optimization:
  Models, Algorithms, and Applications. Springer Science \& Business Media

\bibitem[{Lov\'{a}sz and Schrijver(1991)}]{LS:SDP}
Lov\'{a}sz L, Schrijver A (1991) Cones of matrices and set-functions and
  {0–1} optimization. SIAM Journal on Optimization 1(2):166--190

\bibitem[{Luo et~al.(2007)Luo, Sidiropoulos, Tseng, and Zhang}]{Luo07}
Luo Z, Sidiropoulos N, Tseng P, Zhang S (2007) Approximation bounds for
  quadratic optimization with homogeneous quadratic constraints. SIAM Journal
  on Optimization 18:1--28

\bibitem[{Luo et~al.(2010)Luo, Ma, So, Ye, and Zhang}]{luo2010semidefinite}
Luo ZQ, Ma Wk, So AMC, Ye Y, Zhang S (2010) Semidefinite relaxation of
  quadratic optimization problems. IEEE Signal Processing Magazine 27(3):20--34

\bibitem[{Madani et~al.(2014)Madani, Fazelnia, and Lavaei}]{madani2014rank}
Madani R, Fazelnia G, Lavaei J (2014) Rank-2 matrix solution for semidefinite
  relaxations of arbitrary polynomial optimization problems. Preprint

\bibitem[{Madani et~al.(2015{\natexlab{a}})Madani, Lavaei, and
  Baldick}]{madani2015convexification}
Madani R, Lavaei J, Baldick R (2015{\natexlab{a}}) Convexification of power
  flow problem over arbitrary networks. In: 2015 54th IEEE Conference on
  Decision and Control (CDC), IEEE, pp 1--8

\bibitem[{Madani et~al.(2015{\natexlab{b}})Madani, Sojoudi, and
  Lavaei}]{madani2015convex}
Madani R, Sojoudi S, Lavaei J (2015{\natexlab{b}}) Convex relaxation for
  optimal power flow problem: Mesh networks. IEEE Transactions on Power Systems
  30(1):199--211

\bibitem[{Madani et~al.(2016)Madani, Ashraphijuo, and
  Lavaei}]{madani2016promises}
Madani R, Ashraphijuo M, Lavaei J (2016) Promises of conic relaxation for
  contingency-constrained optimal power flow problem. IEEE Transactions on
  Power Systems 31(2):1297--1307

\bibitem[{Madani et~al.(2017{\natexlab{a}})Madani, Atamt\"urk, and
  Davoudi}]{madani2017scalable}
Madani R, Atamt\"urk A, Davoudi A (2017{\natexlab{a}}) A scalable semidefinite
  relaxation approach to grid scheduling. arXiv preprint arXiv:170703541

\bibitem[{Madani et~al.(2017{\natexlab{b}})Madani, Sojoudi, Fazelnia, and
  Lavaei}]{madani2014low}
Madani R, Sojoudi S, Fazelnia G, Lavaei J (2017{\natexlab{b}}) Finding low-rank
  solutions of sparse linear matrix inequalities using convex optimization.
  SIAM Journal on Optimization 27(2):725--758

\bibitem[{Majumdar et~al.(2014)Majumdar, Ahmadi, and
  Tedrake}]{majumdar2014control}
Majumdar A, Ahmadi AA, Tedrake R (2014) Control and verification of
  high-dimensional systems with {DSOS} and {SDSOS} programming. In: Decision
  and Control (CDC), 2014 IEEE 53rd Annual Conference on, IEEE, pp 394--401

\bibitem[{Mariere et~al.(2003)Mariere, Luo, and Davidson}]{mariere2003blind}
Mariere B, Luo ZQ, Davidson TN (2003) Blind constant modulus equalization via
  convex optimization. IEEE Transactions on Signal Processing 51(3):805--818

\bibitem[{Mohammad-Nezhad and Terlaky(2017)}]{mohammad2017rounding}
Mohammad-Nezhad A, Terlaky T (2017) A rounding procedure for semidefinite
  optimization. Submitted to Operations Research Letters

\bibitem[{Mu et~al.(2016)Mu, Zhang, Wright, and Goldfarb}]{mu2016scalable}
Mu C, Zhang Y, Wright J, Goldfarb D (2016) Scalable robust matrix recovery:
  {Frank--Wolfe} meets proximal methods. SIAM Journal on Scientific Computing
  38(5):A3291--A3317

\bibitem[{Muramatsu and Suzuki(2003)}]{muramatsu2003new}
Muramatsu M, Suzuki T (2003) A new second-order cone programming relaxation for
  max-cut problems. Journal of the Operations Research Society of Japan
  46:164--177

\bibitem[{Murphy and Robshaw(2002)}]{murphy2002essential}
Murphy S, Robshaw MJ (2002) Essential algebraic structure within the aes. In:
  Annual International Cryptology Conference, Springer, pp 1--16

\bibitem[{Natarajan et~al.(2013)Natarajan, Shi, and
  Toh}]{natarajan2013penalized}
Natarajan K, Shi D, Toh KC (2013) A penalized quadratic convex reformulation
  method for random quadratic unconstrained binary optimization. Optimization
  Online

\bibitem[{Nesterov(1998)}]{Nesterov98}
Nesterov Y (1998) Semidefinite relaxation and nonconvex quadratic optimization.
  Optimization Methods and Software 9:141--160

\bibitem[{Nesterov et~al.(1994)Nesterov, Nemirovskii, and
  Ye}]{nesterov1994interior}
Nesterov Y, Nemirovskii AS, Ye Y (1994) Interior-point polynomial algorithms in
  convex programming. SIAM

\bibitem[{Papp and Alizadeh(2013)}]{papp2013semidefinite}
Papp D, Alizadeh F (2013) Semidefinite characterization of sum-of-squares cones
  in algebras. SIAM Journal on Optimization 23(3):1398--1423

\bibitem[{Pereira et~al.(2010)Pereira, Ibrahimi, and
  Montanari}]{7_pereira2010learning}
Pereira J, Ibrahimi M, Montanari A (2010) Learning networks of stochastic
  differential equations. In: Advances in Neural Information Processing
  Systems, pp 172--180

\bibitem[{Permenter and Parrilo(2018)}]{permenter2014partial}
Permenter F, Parrilo P (2018) Partial facial reduction: simplified, equivalent
  {SDPs} via approximations of the {PSD} cone. Mathematical Programming
  171:1--54

\bibitem[{Rotkowitz and Lall(2005)}]{1_rotkowitz2005characterization}
Rotkowitz M, Lall S (2005) A characterization of convex problems in
  decentralized control. IEEE transactions on Automatic Control
  50(12):1984--1996

\bibitem[{Sarkar and Rakhlin(2018)}]{6_sarkar2018fast}
Sarkar T, Rakhlin A (2018) How fast can linear dynamical systems be learned?
  arXiv preprint arXiv:181201251

\bibitem[{Sherali and Adams(1990)}]{sherali1990hierarchy}
Sherali HD, Adams WP (1990) A hierarchy of relaxations between the continuous
  and convex hull representations for zero-one programming problems. SIAM
  Journal on Discrete Mathematics 3(3):411--430

\bibitem[{Sherali and Adams(2013)}]{sherali2013reformulation}
Sherali HD, Adams WP (2013) A reformulation-linearization technique for solving
  discrete and continuous nonconvex problems, vol~31. Springer Science \&
  Business Media

\bibitem[{Singer(2011)}]{singer2011angular}
Singer A (2011) Angular synchronization by eigenvectors and semidefinite
  programming. Applied and Computational Harmonic Analysis 30(1):20--36

\bibitem[{Sojoudi and Lavaei(2013{\natexlab{a}})}]{sojoudi2013exactness1}
Sojoudi S, Lavaei J (2013{\natexlab{a}}) On the exactness of semidefinite
  relaxation for nonlinear optimization over graphs: Part {I}. In: Decision and
  Control (CDC), 2013 IEEE 52nd Annual Conference on, IEEE, pp 1043--1050

\bibitem[{Sojoudi and Lavaei(2013{\natexlab{b}})}]{sojoudi2013exactness2}
Sojoudi S, Lavaei J (2013{\natexlab{b}}) On the exactness of semidefinite
  relaxation for nonlinear optimization over graphs: Part {II}. In: Decision
  and Control (CDC), 2013 IEEE 52nd Annual Conference on, IEEE, pp 1043--1050

\bibitem[{Tawarmalani and Sahinidis(2005)}]{BARON}
Tawarmalani M, Sahinidis NV (2005) {A polyhedral branch-and-cut approach to
  global optimization}. Mathematical Programming 103:225--249

\bibitem[{Toker and Ozbay(1998)}]{toker1998complexity}
Toker O, Ozbay H (1998) On the complexity of purely complex $\mu$ computation
  and related problems in multidimensional systems. IEEE Transactions on
  Automatic Control 43(3):409--414

\bibitem[{Wang et~al.(2018)Wang, Matni, and Doyle}]{3_wang2018separable}
Wang YS, Matni N, Doyle JC (2018) Separable and localized system-level
  synthesis for large-scale systems. IEEE Transactions on Automatic Control
  63(12):4234--4249

\bibitem[{Wang et~al.(2019)Wang, Matni, and Doyle}]{2_wang2019system}
Wang YS, Matni N, Doyle JC (2019) A system level approach to controller
  synthesis. IEEE Transactions on Automatic Control

\bibitem[{Ye(1999{\natexlab{a}})}]{Ye992}
Ye Y (1999{\natexlab{a}}) Approximating global quadratic optimization with
  convex quadratic constraints. Journal of Global Optimization 15:1--17

\bibitem[{Ye(1999{\natexlab{b}})}]{Ye991}
Ye Y (1999{\natexlab{b}}) Approximating quadratic programming with bound and
  quadratic constraints. Mathematical Programming 84:219--226

\bibitem[{Zhang(2000)}]{Zhang00}
Zhang S (2000) Quadratic maximization and semidefinite relaxation. Mathematical
  Programming 87:453--465

\bibitem[{Zhang and Huang(2006)}]{Zhang06}
Zhang S, Huang Y (2006) Complex quadratic optimization and semidefinite
  programming. SIAM Journal on Optimization 87:871--890

\bibitem[{Zohrizadeh et~al.(2018{\natexlab{a}})Zohrizadeh, Kheirandishfard,
  Nasir, and Madani}]{UC_CDC}
Zohrizadeh F, Kheirandishfard M, Nasir A, Madani R (2018{\natexlab{a}})
  Sequential relaxation of unit commitment with {AC} transmission constraints.
  In: IEEE 57th Annual Conference on Decision and Control (CDC)

\bibitem[{Zohrizadeh et~al.(2018{\natexlab{b}})Zohrizadeh, Kheirandishfard,
  Quarm, and Madani}]{OPF_CDC}
Zohrizadeh F, Kheirandishfard M, Quarm E, Madani R (2018{\natexlab{b}})
  Penalized parabolic relaxation for optimal power flow problem. In: IEEE 57th
  Annual Conference on Decision and Control (CDC)

\end{thebibliography}


\appendix
\section{Application to polynomial optimization}\label{apo}
In this section, we show that the proposed penalized {SDP} approach can be used for polynomial optimization as well.
A polynomial optimization problem is formulated as
\begin{subequations}\begin{align}
	\underset{\boldsymbol{x}\in\mathbb{R}^n}{\text{minimize}} \ \ 
	& u_0(\boldsymbol{x}) & \label{poly_obj}\\
	\ \ \ \text{s.t.} \ \ \ 
	&  u_k(\boldsymbol{x})\leq0, \quad \quad  k\in\mathcal{I}\label{poly_ineq}\\
	& 
	u_k(\boldsymbol{x})=0, \quad \quad k\in\mathcal{E},\label{poly_eq}
	\end{align}\end{subequations}
for every $k\in\{0\}\cup\mathcal{I}\cup\mathcal{E}$, where each function $u_k:\mathbb{R}^n\to\mathbb{R}$ is a polynomial of arbitrary degree. 
Problem \eqref{poly_obj}--\eqref{poly_eq} can be reformulated as a QCQP of the form:
\begin{subequations}\begin{align}
	\underset{\begin{subarray}{l} \boldsymbol{x}\in\mathbb{R}^n, 
		{\boldsymbol{y}}\in\mathbb{R}^o\end{subarray}}{\!\!\text{minimize}} \ \ 
	&  \!\!\!\!\phantom{v_i} w_0(\boldsymbol{x},\boldsymbol{y}) & \label{qcqp_obj}\\
	\ \ \ \text{s.t.} \ \ \ 
	&  \!\!\!\!\phantom{v_i} w_k(\boldsymbol{x},\boldsymbol{y})\leq0,  \quad \quad k\in\mathcal{I}\label{qcqp_ineq}\\
	& 
	\!\!\!\!\phantom{v_i} w_k(\boldsymbol{x},\boldsymbol{y})=0,  \quad \quad k\in\mathcal{E}\label{qcqp_eq}\\
	& 
	\!\!\!\!\phantom{w_k} v_{i}(\boldsymbol{x},\boldsymbol{y})=0 , \quad \quad i\in\mathcal{O}, \label{qcqp_aux}
	\end{align}\end{subequations}
where $\boldsymbol{y}\in\mathbb{R}^{|\mathcal{O}|}$ is an auxiliary variable, and
$v_1,\ldots,v_{|\mathcal{O}|}$ and $w_0,w_1,\ldots,w_{|\{0\}\cup\mathcal{I}\cup\mathcal{E}|}$ are quadratic functions with the following properties:
\begin{itemize}
	\item For every $\boldsymbol{x}\in\mathbb{R}^n$, the function 
	$\boldsymbol{v}(\boldsymbol{x},\cdot):\mathbb{R}^{|\mathcal{O}|}\to\mathbb{R}^{|\mathcal{O}|}$ is invertible,
	\item If $\boldsymbol{v}(\boldsymbol{x},\boldsymbol{y})=\boldsymbol{0}_n$, then
	$w_k(\boldsymbol{x},\boldsymbol{y}) = u_k(\boldsymbol{x})$ for every 
	$k\in\{0\}\cup\mathcal{I}\cup\mathcal{E}\nonumber$.
\end{itemize}
Based on the above properties, there is a one-to-one correspondence between the feasible sets of \eqref{poly_obj}--\eqref{poly_eq} and \eqref{qcqp_obj}--\eqref{qcqp_aux}. Moreover, a feasible point $(\accentset{\ast}{\boldsymbol{x}},\accentset{\ast}{\boldsymbol{y}})$ is an optimal solution to the QCQP \eqref{qcqp_obj}--\eqref{qcqp_aux} if and only if $\accentset{\ast}{\boldsymbol{x}}$ is an optimal solution to the polynomial optimization problem \eqref{poly_obj}--\eqref{poly_eq}. 

\begin{theorem}[\!\!\cite{madani2014rank}]\label{QCQP_size_bound}
	Suppose that $\{u_k\}_{k\in\{0\}\cup\mathcal{I}\cup\mathcal{E}}$ are polynomials of degree at most $d$, consisting of $m$ monomials in total. There exists a QCQP reformulation of the polynomial optimization \eqref{poly_obj}--\eqref{poly_eq} in the form of \eqref{qcqp_obj}--\eqref{qcqp_aux}, where
	${|\mathcal{O}|}\leq m
	n
	\left(\lfloor\log_2(d)\rfloor+1\right)$.
\end{theorem}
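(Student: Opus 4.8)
The plan is to prove this constructively: I would exhibit an explicit auxiliary vector $\boldsymbol{y}$ together with the quadratic functions $v_i$ and $w_k$, and then bound $|\mathcal{O}|$ by counting the auxiliaries introduced. The guiding principle is that every monomial occurring in the $u_k$'s is assigned its own coordinate of $\boldsymbol{y}$, and that each such coordinate is \emph{defined} by equating it to a product of two previously constructed quantities, so that its defining equation is genuinely quadratic in the stacked vector $(\boldsymbol{x},\boldsymbol{y})$.

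First I would set up the triangular structure that guarantees the invertibility property. Order the auxiliaries $y_1,\ldots,y_{|\mathcal{O}|}$ and define $v_\ell(\boldsymbol{x},\boldsymbol{y})=y_\ell-g_\ell$, where $g_\ell$ is a product of two entries drawn from $\{x_1,\ldots,x_n,y_1,\ldots,y_{\ell-1}\}$. Then each $v_\ell$ is quadratic, and the Jacobian $\partial\boldsymbol{v}/\partial\boldsymbol{y}$ is lower triangular with unit diagonal, hence invertible for every fixed $\boldsymbol{x}$, which discharges the first required property. By induction, on the variety $\{\boldsymbol{v}=\boldsymbol{0}\}$ each $y_\ell$ equals its intended monomial, so replacing every monomial of $u_k$ by its representing coordinate yields an at most linear (hence quadratic) function $w_k$ satisfying $w_k=u_k$ whenever $\boldsymbol{v}(\boldsymbol{x},\boldsymbol{y})=\boldsymbol{0}$, discharging the second property.

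The crux is to realize each monomial $M=\prod_{i=1}^n x_i^{p_i}$ using at most $n(\lfloor\log_2 d\rfloor+1)$ auxiliaries, which forces a scheme logarithmic rather than linear in $d$. I would exploit the binary expansions $p_i=\sum_{j=0}^{s}b_{ij}2^j$ with $s=\lfloor\log_2 d\rfloor$ and the regrouping $M=\prod_{j=0}^{s}\big(\prod_i x_i^{b_{ij}}\big)^{2^{j}}$, evaluated by a Horner-type recursion $H_s=c_s$, $H_j=c_j\,H_{j+1}^2$ with $c_j=\prod_i x_i^{b_{ij}}$, so that $H_0=M$. Each squaring $H_{j+1}\mapsto H_{j+1}^2$ and each multiplication by some $x_i$ is a single quadratic step of the triangular form above, and the number of such steps for one monomial is governed by the total number of set bits $\sum_i \#\{j:b_{ij}=1\}\le n(s+1)$. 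Allotting a fresh block of at most $n(\lfloor\log_2 d\rfloor+1)$ auxiliaries to each of the $m$ monomials then gives $|\mathcal{O}|\le m\,n\,(\lfloor\log_2 d\rfloor+1)$.

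I expect the main obstacle to be the bookkeeping that simultaneously (i) keeps every defining relation exactly quadratic, (ii) preserves the triangular ordering required for invertibility, and (iii) fits both the squaring and the accumulation steps for each monomial within the budget $n(\lfloor\log_2 d\rfloor+1)$. It is precisely the Horner-squaring regrouping that converts the naive degree-$d$ (linear) cost into the logarithmic cost, and the delicate part is verifying that the combined squaring-and-multiply steps never exceed the stated count while sharing the intermediate powers $x_i^{2^j}$ across bit positions. Once this count is established, the equivalence of feasible sets and of optimal solutions follows immediately from the two structural properties proved above.
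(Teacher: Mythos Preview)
The paper does not supply its own proof of this statement; the theorem is simply quoted from \cite{madani2014rank} and left unproved, so there is nothing in the text to compare your argument against. Your constructive scheme---triangular auxiliaries $y_\ell=g_\ell$ with $g_\ell$ a product of two earlier entries, binary expansion of the exponents, and a Horner square-and-multiply recursion---is the natural approach and certainly yields $|\mathcal{O}|=O(mn\log d)$ together with the two structural properties required of $\boldsymbol{v}$ and $w_k$.

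There is, however, a gap in the count. In the recursion $H_j=c_j\,H_{j+1}^2$ you perform two kinds of quadratic steps: the squarings $H_{j+1}\mapsto H_{j+1}^2$ (one per level, hence $s=\lfloor\log_2 d\rfloor$ of them) \emph{and} the one-at-a-time multiplications by the $x_i$ that assemble each $c_j$ (these are exactly the $\sum_{i}\#\{j:b_{ij}=1\}$ you wrote down). Your sentence ``the number of such steps \dots\ is governed by the total number of set bits'' conflates the two: the squarings are \emph{in addition} to the bit-count, so the per-monomial tally is at most $s+\sum_{i,j}b_{ij}\le s+n(s+1)$, not $n(s+1)$. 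Summed over $m$ monomials this overshoots the stated bound by $ms$. A smaller point in the same direction: you make $w_k$ linear by giving every monomial its own coordinate of $\boldsymbol{y}$, but the formulation only requires $w_k$ to be \emph{quadratic}, so the last product for each monomial can sit directly inside $w_k$; this saves one auxiliary per monomial but does not close the $ms$ gap. To hit the exact constant you would need either to share auxiliaries across monomials (for instance the repeated squares $x_i^{2^j}$ are reusable) or to give a sharper per-monomial accounting; as written, your argument establishes $|\mathcal{O}|\le m\big(n(\lfloor\log_2 d\rfloor+1)+\lfloor\log_2 d\rfloor\big)$ rather than the claimed bound.
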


The next proposition shows that the LICQ regularity of a point $\hat{\boldsymbol{x}}\in\mathbb{R}^n$ is inherited by the corresponding point $(\hat{\boldsymbol{x}},\hat{\boldsymbol{y}})\in\mathbb{R}^n\times\mathbb{R}^o$ of the QCQP reformulation \eqref{qcqp_obj}-\eqref{qcqp_aux}.

\begin{proposition}\label{thm4}
	Consider a pair of vectors $\hat{\boldsymbol{x}}\in\mathbb{R}^n$ and $\hat{\boldsymbol{y}}\in\mathbb{R}^{|\mathcal{O}|}$ satisfying
	$\boldsymbol{v}(\hat{\boldsymbol{x}},\hat{\boldsymbol{y}})=\boldsymbol{0}_n$. The following two statements are equivalent:
	\begin{enumerate}
		\item $\hat{\boldsymbol{x}}$ is feasible and satisfies the LICQ condition for the polynomial optimization problem \eqref{poly_obj}--\eqref{poly_ineq}. 
		\item $(\hat{\boldsymbol{x}},\hat{\boldsymbol{y}})$ is feasible and satisfies the LICQ condition for the QCQP \eqref{qcqp_obj}--\eqref{qcqp_aux}.
	\end{enumerate}
\end{proposition}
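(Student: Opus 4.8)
The plan is to split the claim into its feasibility part and its regularity part, since the ``feasible'' clauses in statements (1) and (2) are easy and the LICQ clauses carry all the content. Feasibility equivalence is immediate from the one-to-one correspondence already established for the reduction \eqref{qcqp_obj}--\eqref{qcqp_aux}: because $\boldsymbol{v}(\hat{\boldsymbol{x}},\hat{\boldsymbol{y}})=\boldsymbol{0}_n$, the second defining property gives $w_k(\hat{\boldsymbol{x}},\hat{\boldsymbol{y}})=u_k(\hat{\boldsymbol{x}})$ for every $k\in\{0\}\cup\mathcal{I}\cup\mathcal{E}$, so $\hat{\boldsymbol{x}}$ satisfies the polynomial constraints if and only if $(\hat{\boldsymbol{x}},\hat{\boldsymbol{y}})$ satisfies the QCQP constraints. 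In particular the binding inequality indices coincide: writing $\hat{\mathcal{B}}_u\subseteq\mathcal{I}\cup\mathcal{E}$ for the binding set of the polynomial problem, the binding set of the QCQP equals $\hat{\mathcal{B}}_u\cup\mathcal{O}$, since each auxiliary constraint $v_i=0$ is an equality and is therefore always binding.

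The heart of the argument is a block elimination comparing the two Jacobians of binding constraints. Ordering the QCQP variables as $(\boldsymbol{x},\boldsymbol{y})$, I would partition the Jacobian $\mathcal{J}_{\hat{\mathcal{B}}_u\cup\mathcal{O}}(\hat{\boldsymbol{x}},\hat{\boldsymbol{y}})$ of Definition~\ref{Jacobian} into the blocks
\[
\begin{bmatrix} \partial_{\boldsymbol{x}} w_{\hat{\mathcal{B}}_u} & \partial_{\boldsymbol{y}} w_{\hat{\mathcal{B}}_u} \\ \partial_{\boldsymbol{x}}\boldsymbol{v} & \partial_{\boldsymbol{y}}\boldsymbol{v} \end{bmatrix},
\]
evaluated at $(\hat{\boldsymbol{x}},\hat{\boldsymbol{y}})$, where the top block-row collects the binding $w$-constraints and the bottom block-row collects all $v$-constraints. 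The key structural ingredient is that the square block $\partial_{\boldsymbol{y}}\boldsymbol{v}$ is nonsingular. Using this, I would right-multiply by the unit-triangular (hence rank-preserving) matrix $\bigl[\begin{smallmatrix}\boldsymbol{I}&\boldsymbol{0}\\-(\partial_{\boldsymbol{y}}\boldsymbol{v})^{-1}\partial_{\boldsymbol{x}}\boldsymbol{v}&\boldsymbol{I}\end{smallmatrix}\bigr]$ to clear the lower-left block and then left-multiply to clear the upper-right block, bringing the Jacobian to the block-diagonal form $\operatorname{diag}(\boldsymbol{S},\,\partial_{\boldsymbol{y}}\boldsymbol{v})$ with Schur complement
\[
\boldsymbol{S}=\partial_{\boldsymbol{x}} w_{\hat{\mathcal{B}}_u}-\bigl(\partial_{\boldsymbol{y}} w_{\hat{\mathcal{B}}_u}\bigr)\bigl(\partial_{\boldsymbol{y}}\boldsymbol{v}\bigr)^{-1}\bigl(\partial_{\boldsymbol{x}}\boldsymbol{v}\bigr).
\]
Since $\partial_{\boldsymbol{y}}\boldsymbol{v}$ is invertible, this yields $\operatorname{rank}\mathcal{J}_{\hat{\mathcal{B}}_u\cup\mathcal{O}}(\hat{\boldsymbol{x}},\hat{\boldsymbol{y}})=|\mathcal{O}|+\operatorname{rank}\boldsymbol{S}$.

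The next step identifies $\boldsymbol{S}$ with the polynomial Jacobian $\partial_{\boldsymbol{x}} u_{\hat{\mathcal{B}}_u}(\hat{\boldsymbol{x}})$. Because $\partial_{\boldsymbol{y}}\boldsymbol{v}$ is nonsingular, the implicit function theorem solves $\boldsymbol{v}(\boldsymbol{x},\boldsymbol{y})=\boldsymbol{0}$ locally as $\boldsymbol{y}=\boldsymbol{\psi}(\boldsymbol{x})$ near $\hat{\boldsymbol{x}}$, with $D\boldsymbol{\psi}=-(\partial_{\boldsymbol{y}}\boldsymbol{v})^{-1}(\partial_{\boldsymbol{x}}\boldsymbol{v})$; the second defining property then gives $u_k(\boldsymbol{x})=w_k(\boldsymbol{x},\boldsymbol{\psi}(\boldsymbol{x}))$, and the chain rule produces $\partial_{\boldsymbol{x}} u_k=\partial_{\boldsymbol{x}} w_k-(\partial_{\boldsymbol{y}} w_k)(\partial_{\boldsymbol{y}}\boldsymbol{v})^{-1}(\partial_{\boldsymbol{x}}\boldsymbol{v})$, which is exactly the $k$-th row of $\boldsymbol{S}$. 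Hence $\operatorname{rank}\boldsymbol{S}=\operatorname{rank}\partial_{\boldsymbol{x}} u_{\hat{\mathcal{B}}_u}(\hat{\boldsymbol{x}})$ and the rank identity shows the QCQP Jacobian attains full row rank $|\hat{\mathcal{B}}_u|+|\mathcal{O}|$ if and only if the polynomial Jacobian attains full row rank $|\hat{\mathcal{B}}_u|$, which is precisely the equivalence of the two LICQ conditions.

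The main obstacle is justifying the nonsingularity of $\partial_{\boldsymbol{y}}\boldsymbol{v}$: the stated hypothesis only asserts that $\boldsymbol{v}(\boldsymbol{x},\cdot)$ is a bijection, and a polynomial bijection need not have nonsingular Jacobian. I would close this gap by appealing to the explicit reduction behind Theorem~\ref{QCQP_size_bound}, in which each auxiliary variable $y_i$ is introduced through an equation of the form $y_i-(\text{product of previously defined variables})=0$; ordering the variables by their introduction makes $\partial_{\boldsymbol{y}}\boldsymbol{v}$ lower-triangular with unit diagonal, hence nonsingular at every point. Everything else reduces to routine linear algebra and one application of the chain rule.
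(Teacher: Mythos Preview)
Your proposal is correct and uses essentially the same Schur-complement argument as the paper: both identify $\partial_{\boldsymbol{x}}\boldsymbol{u}$ with the Schur complement of the QCQP block Jacobian in the block $\partial_{\boldsymbol{y}}\boldsymbol{v}$, and conclude that the two Jacobians have full row rank simultaneously. You are in fact more careful than the paper on two points it glosses over---explicitly restricting to the binding-constraint rows, and observing that bijectivity of $\boldsymbol{v}(\boldsymbol{x},\cdot)$ does not by itself yield nonsingularity of $\partial_{\boldsymbol{y}}\boldsymbol{v}$ (which the paper uses without comment and which you justify via the triangular structure of the standard reduction).
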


\begin{proof}
	From
	$\boldsymbol{u}(\hat{\boldsymbol{x}})=\boldsymbol{w}(\hat{\boldsymbol{x}},\hat{\boldsymbol{y}})$
	and the invertiblity assumption for
	$\boldsymbol{v}(\hat{\boldsymbol{x}},\cdot)$, we have
	\begin{align}
	\!\!\!\!\!\frac{\partial\boldsymbol{u}(\hat{\boldsymbol{x}})}{\partial\boldsymbol{x}} &\!=\!\!
	\begin{bmatrix}
	\frac{\partial\boldsymbol{w}(\hat{\boldsymbol{x}},\hat{\boldsymbol{y}})}{\partial\boldsymbol{x}} &
	\frac{\partial\boldsymbol{w}(\hat{\boldsymbol{x}},\hat{\boldsymbol{y}})}{\partial\boldsymbol{y}}
	\end{bmatrix}\!\!
	\begin{bmatrix}
	\boldsymbol{I} &
	-\!\left(\!\frac{\partial\boldsymbol{v}(\hat{\boldsymbol{x}},\hat{\boldsymbol{y}})}{\partial\boldsymbol{y}}\!\right)^{\!\!-1}
	\frac{\partial\boldsymbol{v}(\hat{\boldsymbol{x}},\hat{\boldsymbol{y}})}{\partial\boldsymbol{x}}\!
	\end{bmatrix}^{\!\top}\nonumber\\
	&=\!\frac{\partial\boldsymbol{w}(\hat{\boldsymbol{x}},\hat{\boldsymbol{y}})}{\partial\boldsymbol{x}}
	\!-\!\frac{\partial\boldsymbol{w}(\hat{\boldsymbol{x}},\hat{\boldsymbol{y}})}{\partial\boldsymbol{y}}\!
	\left(\!\frac{\partial\boldsymbol{v}(\hat{\boldsymbol{x}},\hat{\boldsymbol{y}})}{\partial\boldsymbol{y}}\!\right)^{\!\!-1}
	\!
	\frac{\partial\boldsymbol{v}(\hat{\boldsymbol{x}},\hat{\boldsymbol{y}})}{\partial\boldsymbol{x}}.\!\!\!
	\end{align}
	Therefore,
	$\mathcal{J}_{\mathrm{PO}}(\hat{\boldsymbol{x}})=\frac{\partial\boldsymbol{u}(\hat{\boldsymbol{x}})}{\partial\boldsymbol{x}}$
	is equal to the Schur complement of
	\begin{align}	
	\mathcal{J}_{\mathrm{QCQP}}(\hat{\boldsymbol{x}},\hat{\boldsymbol{y}})=
	\begin{bmatrix}
	\frac{\partial\boldsymbol{w}(\hat{\boldsymbol{x}},\hat{\boldsymbol{y}})}{\partial\boldsymbol{x}} &
	\frac{\partial\boldsymbol{w}(\hat{\boldsymbol{x}},\hat{\boldsymbol{y}})}{\partial\boldsymbol{y}}\\
	\frac{\partial\boldsymbol{v}(\hat{\boldsymbol{x}},\hat{\boldsymbol{y}})}{\partial\boldsymbol{x}} &
	\frac{\partial\boldsymbol{v}(\hat{\boldsymbol{x}},\hat{\boldsymbol{y}})}{\partial\boldsymbol{y}}\\
	\end{bmatrix},
	\end{align}
	which is the Jacobian matrix of the QCQP  \eqref{qcqp_obj}--\eqref{qcqp_aux} at the point $(\hat{\boldsymbol{x}},\hat{\boldsymbol{y}})$.
	As a result, the matrix $\mathcal{J}_{\mathrm{PO}}(\hat{\boldsymbol{x}})$ is singular if and only if $\mathcal{J}_{\mathrm{QCQP}}(\hat{\boldsymbol{x}},\hat{\boldsymbol{y}})$ is singular.
\end{proof}

\section{Reformulation-Linearization Technique}\label{apn:RLT}
This appendix covers the reformulation-linearization technique (RLT) of Sherali and Adams \cite{sherali2013reformulation} as an approach to strengthen convex relaxations of the form \eqref{prob_lifted_obj}--\eqref{prob_lifted_conic} in the presence of affine constraints. Define $\mathcal{L}$ as the set of affine constrains in the QCQP \eqref{prob_obj}--\eqref{prob_eq}, i.e., 
$\mathcal{L}\triangleq\{k\in\mathcal{I}\cup\mathcal{E}\;|\;\boldsymbol{A}_k=\boldsymbol{0}_{n\times n}\}$. Define also
\begin{subequations}
	\begin{align}
	\boldsymbol{H}&\triangleq 
	[\!\!\phantom{\boldsymbol{c}}\boldsymbol{B}\{\mathcal{L}\cap\mathcal{I}\}^{\top},
	\!\!\phantom{\boldsymbol{c}}\boldsymbol{B}\{\mathcal{L}\cap\mathcal{E}\}^{\top},
	-\!\!\phantom{\boldsymbol{c}}\boldsymbol{B}\{\mathcal{L}\cap\mathcal{E}\}^{\top}]^{\top},\\
	\boldsymbol{h}&\triangleq 
	[\!\!\!\phantom{\boldsymbol{B}}\boldsymbol{c}\{\mathcal{L}\cap\mathcal{I}\}^{\top}\,,
	\!\!\!\phantom{\boldsymbol{B}}\boldsymbol{c}\{\mathcal{L}\cap\mathcal{E}\}^{\top}\,,
	-\!\!\!\phantom{\boldsymbol{B}}\boldsymbol{c}\{\mathcal{L}\cap\mathcal{E}\}^{\top}\,]^{\top},
	\end{align}
\end{subequations}
where
$\boldsymbol{B}\triangleq 
[\boldsymbol{b}_1,\ldots,\boldsymbol{b}_{|\mathcal{I}\cap\mathcal{E}|}]^{\top}$
and 
$\boldsymbol{c}\triangleq 
[c_1,\ldots,c_{|\mathcal{I}\cap\mathcal{E}|}]^{\top}$. Every $\boldsymbol{x}\in\mathcal{F}$ satisfies
\begin{align}
\boldsymbol{H}\boldsymbol{x} + \boldsymbol{h} \leq 0,
\end{align}
and, as a result, all elements of the matrix  
\begin{align}
\boldsymbol{H}\boldsymbol{x}\boldsymbol{x}^{\!\top}\!\boldsymbol{H}^{\top}+
\boldsymbol{h}\boldsymbol{x}^{\!\top}\!\boldsymbol{H}^{\!\top}+
\boldsymbol{H}\boldsymbol{x}\boldsymbol{h}^{\!\top}+
\boldsymbol{h}\boldsymbol{h}^{\!\top}
\end{align}
are non-negative if $\boldsymbol{x}$ is feasible. Hence, the inequality
\begin{align}
\boldsymbol{e}^{\top}_i\boldsymbol{V}(\boldsymbol{x},\boldsymbol{x}\boldsymbol{x}^{\top})\boldsymbol{e}_j\geq 0
\end{align}
holds true for every $\boldsymbol{x}\in\mathcal{F}$ and $(i,j)\in\mathcal{H}\times\mathcal{H}$, where $\boldsymbol{V}:\mathbb{R}^n\times\mathbb{S}_n\to\mathbb{S}_{|\mathcal{H}|}$ is defined as
\begin{align}
\boldsymbol{V}(\boldsymbol{x},\boldsymbol{X})\triangleq\boldsymbol{H}\boldsymbol{X}\boldsymbol{H}^{\top}+
\boldsymbol{h}\boldsymbol{x}^{\!\top}\!\boldsymbol{H}^{\top}+
\boldsymbol{H}\boldsymbol{x}\boldsymbol{h}^{\!\top}+
\boldsymbol{h}\boldsymbol{h}^{\!\top},
\end{align}
$\mathcal{H}\triangleq\{1,\ldots,|\mathcal{L}\cap\mathcal{I}|+2|\mathcal{L}\cap\mathcal{E}|\}$, and $\boldsymbol{e}_1,\ldots,\boldsymbol{e}_{|\mathcal{H}|}$ denote the standard bases in $\mathbb{R}^{|\mathcal{H}|}$.

This leads to a strengthened relaxation of QCQP \eqref{prob_obj}--\eqref{prob_eq}:
\begin{subequations}\begin{align}
	\underset{\begin{subarray}{l} \boldsymbol{x}\in\mathbb{R}^n, 
		\boldsymbol{X}\in\mathbb{S}_n\end{subarray}}{\text{minimize}} \ \ &
	\bar{q}_0(\boldsymbol{x},\boldsymbol{X})  \label{prob_rtl_lifted_obj} \\
	\ \ \ \ \ \ \text{s.t.} \ \ \ \ 
	&  \bar{q}_k(\boldsymbol{x},\boldsymbol{X}) \leq 0, \qquad\qquad k\in\mathcal{I}\label{prob_rtl_lifted_ineq} \\ 
	&  \bar{q}_k(\boldsymbol{x},\boldsymbol{X}) = 0,  \qquad\qquad k\in\mathcal{E}\label{prob_rtl_lifted_eq} \\
	&  \boldsymbol{X}-\boldsymbol{x}\boldsymbol{x}^\top\succeq_{\mathcal{C}_r}0  \label{prob_rtl_lifted_conic}\\
	& \boldsymbol{e}^{\top}_i\boldsymbol{V}(\boldsymbol{x},\boldsymbol{X})\boldsymbol{e}_j\geq 0,
	\qquad\ (i,j)\in\mathcal{V} \label{prob_rtl_lifted_valid}
	\end{align}\end{subequations}
where $\mathcal{V}\subseteq\mathcal{H}\times\mathcal{H}$ is a selection of RLT inequalities.


\end{document}